\definecolor{rltblue}{rgb}{0,0,0.75}
\title{The Veblen functions for computability theorists}
\author{Alberto Marcone}
    \address{Dipartimento di Matematica e Informatica,
    Universit\`{a} di Udine,
    33100 Udine,
    Italy}
\email{alberto.marcone@dimi.uniud.it}
\author{Antonio Montalb\'an}
    \address{Department of Mathematics,
    University of Chicago,
    Chicago, IL 60637,
    USA}
\email{antonio@math.uchicago.edu}
\thanks{Marcone's research was partially supported by PRIN of Italy.
Montalb\'an's research was partially supported by NSF grant DMS-0901169.\\
We thank the referees for their careful reading of the first draft of the paper
and their many suggestions for improving the exposition.}
\date{last saved: July 5, 2010 \\
Compiled: \today}
\newtheorem{theorem}{Theorem}[section]
\newtheorem{lemma}[theorem]{Lemma}
\newtheorem{corollary}[theorem]{Corollary}
\theoremstyle{definition}
\newtheorem{definition}[theorem]{Definition}
\newtheorem{remark}[theorem]{Remark}
\numberwithin{equation}{section}
\newcommand{\set}[2]{\{\,{#1}:{#2}\,\}}
\newcommand{\N}{\ensuremath{\mathbb N}}
\newcommand{\Z}{\ensuremath{\mathbb Z}}
\newcommand{\Bai}{\ensuremath{\N^\N}}
\newcommand{\Seq}{{\ensuremath{\N^{<\N}}}}
\newcommand{\X}{{\ensuremath{\mathcal{X}}}}
\newcommand{\leqt}{\leq_T}
\newcommand{\teq}{\equiv_T}
\newcommand{\converges}{\mathord\downarrow}
\newcommand{\diverges}{\mathord\uparrow}
\newcommand{\la}{\langle}
\newcommand{\ra}{\rangle}
\newcommand{\es}{\emptyset}
\newcommand{\upto}{\!\restriction\!}
\newcommand{\conc}{^\smallfrown}
\newcommand{\KB}{\ensuremath{\leq_{\mathrm{KB}}}}
\newcommand{\KBst}{\ensuremath{<_{\mathrm{KB}}}}
\newcommand{\iexp}[2]{\om^{\la #1, #2 \ra}}
\newcommand{\iexpop}[2]{\omop^{\la #1, #2 \ra}}
\newcommand{\eps}{\varepsilon}
\newcommand{\epsop}{\boldsymbol\varepsilon}
\newcommand{\omop}{{\boldsymbol \omega}}
\newcommand{\phiop}{{\boldsymbol \varphi}}
\newcommand{\PI}[2]{\ensuremath{\boldsymbol\Pi^{#1}_{#2}}}
\newcommand{\system}[1]{\mbox{\fontfamily{cmss}\fontshape{n}\fontseries{m}%
    \selectfont#1}}
\newcommand{\RCA}{\system{RCA}\ensuremath{_0}}
\newcommand{\CA}{\system{CA}\ensuremath{_0}}
\newcommand{\WKL}{\system{WKL}\ensuremath{_0}}
\newcommand{\ACA}{\system{ACA}\ensuremath{_0}}
\newcommand{\ACApr}{\system{ACA}\ensuremath{'_0}}
\newcommand{\ACApl}{\system{ACA}\ensuremath{^+_0}}
\newcommand{\ATR}{\system{ATR}\ensuremath{_0}}
\newcommand{\PCA}{\ensuremath{\Pi^1_1}-\CA}
\def\si{\sigma}
\def\om{\omega}
\def\a{\alpha}
\def\b{\beta}
\def\g{\gamma}
\def\d{\delta}
\newcommand{\J}{\mathcal{J}}
\newcommand{\K}{\mathcal{K}}
\newcommand{\JT}{\mathcal{JT}}
\newcommand{\Jom}[1]{J^{\om^{#1}}}
\newcommand{\JJom}[1]{\J^{\om^{#1}}}
\renewcommand{\hom}[1]{h^{\om^{#1}}}
\newcommand{\JTom}[1]{\JT^{\om^{#1}}}
\newcommand{\Kom}[1]{K^{\om^{#1}}}
\newcommand{\KKom}[1]{\K^{\om^{#1}}}
\def\Y{{\mathcal Y}}
\def\Z{{\mathcal Z}}
\def\isom{\cong}
\def\WO{{\textsf{WOP}}}
\def\bfF{{\mathbf F}}
\begin{document}

\begin{abstract}
We study the computability-theoretic complexity and
proof-the\-o\-ret\-ic strength of the following statements: (1) ``If
$\X$ is a well-ordering, then so is $\epsop_\X$'', and (2) ``If $\X$ is
a well-ordering, then so is $\phiop(\a,\X)$'', where $\a$ is a fixed
computable ordinal and $\phiop$ represents the two-placed Veblen
function. For the former statement, we show that $\om$ iterations of
the Turing jump are necessary in the proof and that the statement is
equivalent to \ACApl\ over \RCA. To prove the latter statement we need
to use $\om^\a$ iterations of the Turing jump, and we show that the
statement is equivalent to $\Pi^0_{\om^\a}$-\CA. Our proofs are purely
computability-theoretic. We also give a new proof of a result of
Friedman: the statement ``if $\X$ is a well-ordering, then so is
$\phiop(\X,0)$'' is equivalent to \ATR\ over \RCA.
\end{abstract}

\maketitle


\section{Introduction}\label{sect:intro}

The Veblen functions on ordinals are well-known and commonly used in
proof theory. Proof theorists know that these functions have an
interesting and complex behavior that allows them to build ordinals
that are large enough to calibrate the consistency strength of
different logical systems beyond Peano Arithmetic. The goal of this
paper is to investigate this behavior from a computability viewpoint.

The well-known ordinal $\eps_0$ is defined to be the first fixed point
of the function $\a\mapsto\om^\a$, or equivalently $\eps_0 = \sup\{\om,
\om^\om, \om^{\om^{\om}},\dots\}$. In 1936 Gentzen \cite{Gen36}, used
transfinite induction on primitive recursive predicates along $\eps_0$,
together with finitary methods, to give a proof of the consistency of
Peano Arithmetic. This, combined with G\"{o}del's Second Incompleteness
Theorem, implies that Peano Arithmetic does not prove that $\eps_0$ is
a well-ordering. On the other hand, transfinite induction up to any
smaller ordinal can be proved within Peano Arithmetic. This makes
$\eps_0$ the {\em proof-theoretic ordinal} of Peano Arithmetic.

This result kicked off a whole area of proof theory, called ordinal
analysis, where the complexity of logical systems is measured in terms
of (among other things) how much transfinite induction is needed to
prove their consistency. (We refer the reader to \cite{Rat06} for an
exposition of the general ideas behind ordinal analysis.) The
proof-theoretic ordinal of many logical systems have been calculated.
An example that is relevant to this paper is the system \ACApl\ (see
Section \ref{ssect:subsystems} below), whose proof-theoretic ordinal is
$\varphi_2(0)=\sup\{\eps_0, \eps_{\eps_0},
\eps_{\eps_{\eps_0}},\dots\}$; the first fixed point of the {\em
epsilon function} \cite[Thm.\ 3.5]{Rat91}. The {\em epsilon function}
is the one that given $\g$, returns $\eps_\g$, the $\g$th fixed point
of the function $\a\mapsto\om^\a$ starting with $\g=0$.

The Veblen functions, introduced in 1908 \cite{Veb08}, are functions
on ordinals that are commonly used in proof theory to obtain the
proof-theoretic ordinals of predicative theories beyond Peano
Arithmetic.
\begin{itemize}
\item $\varphi_0(\a)=\om^\a$.
\item $\varphi_{\b+1}(\a)$ is the $\a$th fixed point of $\varphi_\b$ starting with $\a=0$.
\item when $\lambda$ is a limit ordinal, $\varphi_{\lambda}(\a)$ is
    the $\a$th simultaneous fixed point of all the $\varphi_\b$ for
    $\b<\lambda$, also starting with $\a=0$.
\end{itemize}
Note that $\varphi_1$ is the epsilon function.

The Feferman-Sch\"{u}tte ordinal $\Gamma_0$ is defined to be the least
ordinal closed under the binary Veblen function
$\varphi(\b,\a)=\varphi_\b(\a)$, or equivalently
\[
\Gamma_0=\sup\{\varphi_0(0), \varphi_{\varphi_0(0)}(0),
\varphi_{\varphi_{\varphi_0(0)}(0)}(0),\dots\}.
\]
$\Gamma_0$ is the proof-theoretic ordinal of Feferman's Predicative
Analysis \cite{Fef64,Sch77}, and of \ATR\ \cite{FMcS}\footnote{for
the definition of \ATR\ and of other subsystems of second order
arithmetic mentioned in this introduction see Section
\ref{ssect:subsystems} below.}. Again, this means that the
consistency of \ATR\ can be proved by finitary methods together with
transfinite induction up to $\Gamma_0$, and that \ATR\ proves the
well-foundedness of any ordinal below $\Gamma_0$.

Sentences stating that a certain linear ordering is well-ordered are
\PI11. So, even if they are strong enough to prove the consistency of
some theory, they have no set-existence implications. However, a
sentence stating that an operator on linear orderings preserves
well-orderedness is \PI12, and hence gives rise to a natural reverse
mathematics question. The following theorems answer two questions of
this kind.

\begin{theorem}[Girard, {\cite[p.\ 299]{Girard}}]\label{Girard}
Over \RCA, the statement \lq\lq if \X\ is a well-ordering then
$\omop^\X$ is also a well-ordering\rq\rq\ is equivalent to \ACA.
\end{theorem}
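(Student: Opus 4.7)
The plan is to establish each direction separately.

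For $\ACA\Rightarrow$ the statement: Work in $\ACA$, fix a well-ordering $\X$, and suppose for contradiction that $(\alpha_n)_{n\in\N}$ is an infinite descending sequence in $\omop^\X$. Writing each $\alpha_n$ in Cantor normal form as $\omega^{x_{n,0}}+\cdots+\omega^{x_{n,k_n-1}}$ with $x_{n,0}\geq_\X\cdots\geq_\X x_{n,k_n-1}$, I would use arithmetical comprehension to stabilise column by column: first thin $(\alpha_n)$ to an infinite subsequence on which the leading exponents $x_{n,0}$ are $\leq_\X$-non-increasing; if these strictly decrease infinitely often, we have the desired descending sequence in $\X$, otherwise they are eventually $\X$-constant and I repeat the argument on the second column, and so on. A well-foundedness/pigeonhole bookkeeping ensures the process terminates, yielding the contradiction.

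For the statement $\Rightarrow\ACA$: Use the familiar equivalent of $\ACA$ over $\RCA$, namely ``the range of every injection $f\colon\N\to\N$ exists as a set.'' Given such an $f$, the goal is to exhibit a computable linear ordering $\X_f$ with the following two properties, verified in every $\omega$-model $\mathcal{S}$ containing $f$ but missing $\operatorname{range}(f)$: (a) $\omop^{\X_f}$ has a $\Delta^0_1(f)$ infinite descending sequence, so $\omop^{\X_f}$ is \emph{not} well-ordered in $\mathcal{S}$; (b) every infinite descending sequence in $\X_f$ computes $\operatorname{range}(f)$, so $\X_f$ \emph{is} well-ordered in $\mathcal{S}$. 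Applying the hypothesis of the theorem to $\X_f$ then gives a contradiction, forcing $\operatorname{range}(f)\in\mathcal{S}$ in every such $\omega$-model and yielding $\ACA$ by the usual argument. (The same construction is then promoted to a proof over $\RCA$ by replacing the $\omega$-model talk with a $\Pi^1_1$-style formal derivation.)

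The main obstacle is the backward construction of $\X_f$ realising (a) and (b) simultaneously. By the forward direction, any infinite descent in $\omop^{\X_f}$ yields an infinite descent in $\X_f$, but the extraction uses arithmetical comprehension; so the trick is to design $\X_f$ as a computable pseudo-well-ordering whose only infinite descending sequences are $\operatorname{range}(f)$-computable, while allowing an explicit $\Delta^0_1(f)$ descent inside $\omop^{\X_f}$ built from Cantor normal sums over the running approximations of $\operatorname{range}(f)$. Once the correct $\X_f$ is in hand, verifying (a) reduces to tracking how the Cantor normal form changes as $f$ enumerates new elements, and verifying (b) reduces to observing that any $\X_f$-descent pinpoints, stage by stage, which numbers have and have not entered $\operatorname{range}(f)$.
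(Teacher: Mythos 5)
Your overall architecture is the standard one, and your forward direction is essentially sound: the column-by-column stabilization argument can be carried out in \ACA\ (in fact no thinning is needed for the leading exponents, which are automatically $\leq_\X$-non-increasing along any descending sequence in $\omop^\X$). The paper's version replaces your $\om$-length case analysis and its ``pigeonhole bookkeeping'' by a single greedy pointer moving through positions of the Cantor normal forms, producing a descending sequence in $\X\times\om$ ordered lexicographically; that is cleaner but not essentially different. The real problem is the reversal, where you correctly reduce to producing, from an injection $f$ (equivalently a real $Z$), a $Z$-computable linear ordering $\X_Z$ with a $Z$-computable descending sequence in $\omop^{\X_Z}$ such that every descending sequence in $\X_Z$ computes $Z'$ --- and then you stop. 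Calling this ``the main obstacle'' and describing what $\X_Z$ should accomplish is a restatement of the specification, not a construction; the tension between your (a) and (b) is the entire content of the theorem, since by the forward direction any descent in $\omop^{\X_Z}$ yields a descent in $\X_Z$ computable from $Z'$, so one must arrange a \emph{computable} descent upstairs whose images downstairs are forced to encode the jump.

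The missing construction (Hirst's, as reworked in Section 4 of the paper) is: let $T_Z=\set{Z\upto n}{n\in\N}$, let $J(\si)$ be the sequence of true-stage approximations $\la \si\upto t_0,\dots,\si\upto t_{k-1}\ra$ to $Z'$ computed from $\si$, let $\JT(T_Z)=\set{J(\si)}{\si\in T_Z}$, and take $\X_Z=\la\JT(T_Z),\KB\ra$. Every \KBst-descending sequence in $\X_Z$ must consist of initial segments of the unique path $\J(Z)\teq Z'$ (any node off the path would force a second path), which gives (b). For (a) one needs the computable map
$h(\si)=\bigl(\sum_{i}\om^{J(\si)\upto i}\bigr)+\om^{J(\si)}\cdot 2$, the sum ranging over those $i<|J(\si)|$ with $\{i\}^{\si}(i)\diverges$, and the key lemma that $h$ is $({\supset},{<_{\omop^{\X_Z}}})$-monotone: when a computation newly converges as $\si$ grows, the term $\om^{J(\si)\upto i}$ that disappears is \KB-larger than everything that replaces it, so the value still strictly drops. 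Nothing in your proposal substitutes for this lemma, and without it (or an equivalent device) the reversal is not proved.
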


\begin{theorem}[H. Friedman, unpublished]\label{Friedman}
Over \RCA, the statement \lq\lq if \X\ is a well-ordering then
$\phiop(\X,0)$ is a well-ordering\rq\rq\ is equivalent to \ATR.
\end{theorem}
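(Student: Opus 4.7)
The plan is to prove both directions of the equivalence. For the forward direction, one works inside \ATR\ and, given a well-ordering $\X$, constructs a presentation of $\phiop(\X,0)$ by iterating along $\X$ the ordinal operation ``pass to the next simultaneous fixed point of the lower Veblen functions,'' using arithmetic transfinite recursion. Well-foundedness of the resulting presentation is verified by the standard Veblen-normal-form argument: an infinite descending sequence in $\phiop(\X,0)$ would, by minimizing the leading exponent, yield an infinite descending sequence in $\X$ itself.

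The reverse direction is the substantive half. Working in \RCA\ with the hypothesized \WO\ for $\phiop(\cdot,0)$, I would aim to derive \ATR\ in the equivalent form: for every well-ordering $\Y$ and every set $Z$, the jump hierarchy $H_\Y(Z)$ along $\Y$ starting from $Z$ exists. Given $\Y$ and $Z$, the idea is to construct (primitive recursively in $\Y\oplus Z$) a linear order $\X$ whose points are finite descriptions of approximations to a would-be hierarchy, ordered in a Kleene--Brouwer fashion so that $\X$ is well-founded precisely when $\Y$ is. One first verifies in \RCA\ that $\X$ is a well-ordering, then applies the hypothesis to conclude that $\phiop(\X,0)$ is a well-ordering, and finally extracts $H_\Y(Z)$: the Veblen-normal-form structure below $\phiop(\X,0)$ pinpoints a specific subordering whose elements can be uniformly decoded into the successive levels of the hierarchy.

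The main obstacle is the design of $\X$ and the corresponding decoding. In the analogous arguments for Theorem~\ref{Girard} (Cantor normal form, yielding a single Turing jump) and for the $\epsop$-version (epsilon notations, yielding $\om$ jumps), the exponents of the normal form live in a fixed, manageable well-ordering, so the decoding is straightforward. For $\phiop(\X,0)$ the exponents themselves range over $\X$, and the decoding becomes a genuine transfinite recursion along $\Y$ that \RCA\ cannot directly carry out. The whole point is to trade this missing recursion for a single application of the \WO\ hypothesis: $\X$ must be engineered so that any failure of the hierarchy to exist converts into an infinite descending sequence in $\phiop(\X,0)$, contradicting the assumption. Making this conversion uniform and formalizable in \RCA\ is the technical heart of the proof, and the natural place where ideas from Friedman's unpublished argument (or from the authors' own proofs of the $\epsop$ and $\phiop(\a,\cdot)$ theorems) must be adapted to accommodate a variable well-ordering on the first Veblen coordinate.
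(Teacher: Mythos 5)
Your forward direction is right in spirit but understates the cost: showing that a descending sequence in $\phiop(\a,\X)$ yields one in $\X$ is not a one-step ``minimize the leading exponent'' argument. In the paper it is Theorem~\ref{thm:veblen forward}, a transfinite recursion in which the extracted sequence is computable only from $Z^{(\om^\a)}$ --- precisely the hierarchy that \ATR\ supplies. This is a minor point; the real problem is the reversal.

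In the reversal there is a genuine gap, located exactly where you place the ``technical heart.'' You propose to build a \emph{well-founded} $\X$, apply the hypothesis positively to conclude that $\phiop(\X,0)$ is well-ordered, and then ``decode'' the jump hierarchy from the Veblen-normal-form structure. But well-orderedness is a $\Pi^1_1$ statement with no set-existence content, so in \RCA\ nothing can be extracted from it; the hypothesis has to be used \emph{contrapositively}, on a deliberately \emph{ill-founded} $\X$. Concretely, the paper takes $\X=\la\JTom{\a}(T_Z),{\KB}\ra$, the Kleene--Brouwer ordering of an iterated jump tree whose unique path codes $Z^{(\om^\a)}$; it exhibits, unconditionally and computably in $Z$, a descending sequence in $\phiop(\a,\X)$ via the monotone map $\hom{\a}_g$ of Definition~\ref{def:halphag}; the hypothesis then forces $\X$ itself to have a descending sequence, and any such sequence computes the hierarchy (Theorem~\ref{alpha}). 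Your sketch has this conditional backwards (``any failure of the hierarchy to exist converts into a descending sequence''): the descending sequence in the Veblen order must exist outright, with the hierarchy emerging as the conclusion, not via a contradiction about its non-existence. You are also missing the one ingredient that handles the variable first coordinate, which you correctly flag as the difficulty but do not resolve: the term embedding $\phiop(\a,\X)\hookrightarrow\phiop(\a+\X,0)$ sending $\varphi_{\a,x}\mapsto\varphi_{\a+x}(0)$. This reduces $\WO(\X\mapsto\phiop(\X,0))$ to $\forall\a\,\WO(\X\mapsto\phiop(\a,\X))$, each instance of which is already equivalent to $\Pi^0_{\om^\a}$-\CA; quantifying over $\a$ then yields \ATR.
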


Let $\bfF$ be an operator on linear orderings. We consider the statement
\[
\WO(\bfF): \quad \forall \X\ (\X\text{ is a well-ordering} \implies \bfF(\X)\text{ is a well-ordering}).
\]

We study the behavior of $\bfF$ by analyzing the computational
complexity of the proof of $\WO(\bfF)$ as follows. The statement
$\WO(\bfF)$ can be restated as ``if $\bfF(\X)$ has a descending
sequence, then $\X$ has a descending sequence to begin with''. Given
$\bfF$, the question we ask is:
\begin{quote}
Given a linear ordering $\X$ and a descending sequence in $\bfF(\X)$,
how difficult is to build a descending sequence in $\X$?
\end{quote}

From Hirst's proof of Girard's result \cite{Hirst94}, we can extract
the following answer for $\bfF(\X)=\omop^\X$.

\begin{theorem}\label{Hirst}
If $\X$ is a computable linear ordering, and $\omop^{\X}$ has a
computable descending sequence, then $0'$ computes a descending
sequence in $\X$. Furthermore, there exists a computable linear
ordering $\X$ with a computable descending sequence in $\omop^{\X}$
such that every descending sequence in $\X$ computes $0'$.
\end{theorem}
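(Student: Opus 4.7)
\emph{Upper bound.} Given a computable $\X$ and a computable descending sequence $(\alpha_i)$ in $\omop^\X$, I will extract a descending sequence in $\X$ using $0'$ by peeling Cantor normal forms. Write $\ell(\alpha)$ for the leading exponent in the Cantor normal form of $\alpha\in\omop^\X$. The plan is to maintain, at stage $k$, a ``current subproblem'': a descending sequence $(\gamma_i^{(k)})_i$ in $\omop^{\X^{(k)}}$ with $\X^{(k)}$ an initial segment of $\X$, a pointer $i_k$, and $y_k:=\ell(\gamma_{i_k}^{(k)})$; initially $\X^{(0)}:=\X$, $\gamma^{(0)}:=\alpha$, $i_0:=0$. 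At step $k$, ask $0'$ the $\Sigma^0_1$ question ``does some $j>i_k$ satisfy $\ell(\gamma_j^{(k)})<_\X y_k$?'' If yes, the least witness $j$ gives $y_{k+1}:=\ell(\gamma_j^{(k)})$ and I repoint to $i_{k+1}:=j$. If no, the leading exponents of $(\gamma_j^{(k)})_{j\geq i_k}$ all equal $y_k$; the coefficient of $\omega^{y_k}$ is nonincreasing in $\N$ and stabilizes at $n_*$ (the minimum of a c.e.\ subset of $\N$, hence $0'$-computable), and from some $j_0$ on the residual tails $\beta_j:=\gamma_j^{(k)}-\omega^{y_k}\cdot n_*$ form a descending sequence in $\omop^{\X^{(k)}_{<y_k}}$ with leading exponents strictly below $y_k$; I update the subproblem to $(\beta_j)_{j\geq j_0}$ in $\omop^{\X^{(k)}_{<y_k}}$ and set $y_{k+1}:=\ell(\beta_{j_0})$, $i_{k+1}:=j_0$. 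Both branches will yield $y_{k+1}<_\X y_k$, and the whole procedure is uniformly $0'$-computable.

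\emph{Lower bound.} I will construct a computable $\X$ on $\N$ and a computable descending sequence $(\alpha_i)$ in $\omop^\X$ such that every descending sequence in $\X$ computes $K:=0'$. The construction of $\X$ proceeds by a stage-by-stage approximation keyed to the enumeration of $K$: the elements of $\X$ are organized into ``blocs'' whose positions and internal contents are committed step by step so as to force any infinite $\X$-descent to record $K$-membership information in its successive choices of representatives. In parallel I will exhibit $(\alpha_i)$ with Cantor normal forms that descend via coefficient drops and tail extensions, using only $\X$-elements that are computably nameable at the stage of their introduction, thus sidestepping the need to write down a strictly descending chain of $\X$-exponents (which would immediately furnish a computable descending sequence in $\X$ and defeat the lower bound).

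\emph{Main obstacle.} The delicate step will be the lower bound's dual requirement that $\X$ be intricate enough for its descending sequences to compute $K$ (not merely enumerate it, which is computable for free) yet simple enough to admit an \emph{explicit} computable descending sequence in $\omop^\X$ built without access to $0'$. My plan is to exploit the combinatorial freedom of $\omop$-expansions --- coefficient decreases and tail concatenations --- to absorb the $\X$-descent into moves that do not require committing to a computable descending chain of $\X$-elements, while designing the blocs so that the extra information carried by an infinite $\X$-descending sequence, beyond the enumeration of bloc activations, suffices to decide $K$-membership at each natural. The upper bound is routine once the CNF-peeling strategy is adopted.
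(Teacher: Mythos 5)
Your upper bound is correct and is essentially the paper's own argument (Theorem \ref{thm:om forward}): walk along the Cantor normal forms, use $0'$ to decide the $\Sigma^0_1$ question of whether the current leading exponent ever drops, and if not, stabilize its coefficient and recurse into the tails. The bookkeeping differs only cosmetically from the paper's pointer $(k,i)$ into the normal form of $a_k$, and the one point needing care --- that after stabilization the residual tails are nonzero and still strictly descending, and that the new subproblem has a computable index obtainable from the finitely many $0'$-computed parameters --- goes through as you indicate.

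The lower bound, however, is not a proof but a declaration of intent. You never define the ordering $\X$, never define the sequence $(\alpha_i)$, and never verify either required property; the ``blocs,'' their ``positions and internal contents,'' and the way an $\X$-descent is ``forced to record $K$-membership'' are all left unspecified, and your closing paragraph explicitly labels the essential difficulty as an obstacle you \emph{plan} to overcome. Since this half is the substantive content of the theorem (the paper spends all of Section \ref{sect:exp} on it), the proposal has a genuine gap. For the record, the paper resolves the tension you correctly identify as follows: it takes $\X=\la\JT(T_Z),\KB\ra$, the Kleene--Brouwer ordering of a computable tree whose unique infinite path is a combinatorially massaged version $\J(Z)$ of the jump (Definition \ref{def:J}); uniqueness of the path forces every \KB-descending sequence to consist eventually of initial segments of $\J(Z)$, hence to compute $0'$. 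The computable descending sequence in $\omop^\X$ is the explicit $h(Z\upto n)$, where $h(\si)=\sum_i\om^{J(\si)\upto i}+\om^{J(\si)}\cdot 2$, the sum ranging over $i<|J(\si)|$ with $\{i\}^{\si}(i)\diverges$: when a computation newly halts a summand disappears (a strict drop at an exponent one could not have predicted computably), and otherwise the coefficient of the top term decreases while new, \KB-smaller terms are appended. Your phrase ``coefficient drops and tail extensions'' gestures at exactly this mechanism, but without a concrete ordering and a monotonicity verification (the paper's Lemma \ref{h:monotone}) there is no proof to assess.
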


The first statement of the theorem follows from the results of Section
\ref{sect:forward}, which includes the upper bounds of the
computability-theoretic results and the \lq\lq forward
directions\rq\rq\ of the reverse mathematics results. We include a
proof of the second statement in Section \ref{sect:exp}, where we
modify Hirst's idea to be able to apply it on our other results later.
In doing so, we give a new definition of the Turing jump which,
although computationally equivalent to the usual jump, is
combinatorially easier to manage. This allows us to define computable
approximations to the Turing jump, and we can also define a computable
operation on trees that produces trees whose paths are the Turing jumps
of the input tree. Furthermore, our definition of the Turing jump
behaves nicely when we take iterations.

In Section \ref{sect:eps} we use these features of our proof of
Theorem \ref{Hirst}. First, in Section \ref{ssect:finite} we consider
finite iterations of the Turing jump and of ordinal exponentiation.
(We write $\iexpop n\X$ for the $n$th iterate of the operation
$\omop^\X$; see Definition \ref{iexpop}.) In Theorem \ref{0^n}, we
prove:

\begin{theorem}
Fix $n\in\N$. If $\X$ is a computable linear ordering, and $\iexpop
n\X$ has a computable descending sequence, then $0^{(n)}$ computes a
descending sequence in $\X$. Conversely, there exists a computable
linear ordering $\X$ with a computable descending sequence in $\iexpop
n\X$ such that the jump of every descending sequence in $\X$ computes
$0^{(n)}$.
\end{theorem}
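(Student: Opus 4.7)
The plan is to prove both halves of the theorem by induction on $n$, with the case $n=1$ reducing to Theorem \ref{Hirst} (the case $n=0$ is vacuous since $\iexpop 0 \X = \X$). For the upper bound---that $0^{(n)}$ computes a descending sequence in $\X$ whenever $\iexpop n \X$ has a computable descending sequence---I would iterate the relativization of Theorem \ref{Hirst}. The relativized statement reads: if $\Y$ is an $A$-computable linear ordering and $\omop^\Y$ has an $A$-computable descending sequence, then $A'$ computes a descending sequence in $\Y$. Writing $\iexpop n \X = \omop^{\iexpop{n-1}\X}$ and noting that each $\iexpop k \X$ is computable, I would apply the relativized theorem with $A = \es, 0', 0'', \ldots$ in succession, each application trading one layer of $\omop$ for one iterate of the Turing jump; after $n$ steps one obtains a $0^{(n)}$-computable descending sequence in $\X$.

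For the sharpness direction, the plan is to iterate Hirst's construction (the second half of Theorem \ref{Hirst}) using the reformulation of the Turing jump introduced in Section \ref{sect:exp}. The essential feature of that reformulation is that it admits effective approximations which compose well under iteration, yielding a uniformly computable approximation to $0^{(n)}$ at each of the $n$ levels. Inductively, I would take a computable $\Y$ witnessing the $n-1$ case---so that the jump of every descending sequence in $\Y$ computes $0^{(n-1)}$---and apply the operator from Hirst's construction to $\Y$, producing a computable $\X$ such that $\omop^\X$ has a computable descending sequence and such that every descending sequence in $\X$ yields a descending sequence in $\Y$ together with the extra bit of information needed to boost $0^{(n-1)}$ up to $0^{(n)}$ after one Turing jump.

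The main obstacle I anticipate is the sharpness direction. The upper bound is essentially bookkeeping once Hirst's argument is correctly relativized, but for the inductive construction one must weave computable approximations to the iterated jump into a single computable linear ordering in such a way that removing one layer of $\omop$ by taking a Turing jump exposes the next level of approximation. Making this precise is exactly what the machinery of Section \ref{sect:exp}---where the paper gives its new presentation of the Turing jump expressly designed to behave nicely under iteration---is set up to accomplish, so the plan is to carry the inductive step through using the operations on trees and the computable approximations developed there, and then verify by induction that the resulting $\X$ has the two required properties.
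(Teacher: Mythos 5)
Your overall strategy coincides with the paper's: the upper bound is obtained exactly as in Corollary \ref{cor forward ACApr}, by peeling off one layer of $\omop$ per application of the relativized Theorem \ref{thm:om forward}, and the lower bound is the ordering $\la \JT^n(T_Z), {\KB}\ra$ obtained by iterating the Jump Tree operator $n$ times on the tree of initial segments of $Z$. So the plan is sound in outline, and the first half is complete as stated.

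There is, however, a concrete gap in your inductive step for the sharpness direction. Your induction hypothesis --- that some computable $\Y$ has a computable descending sequence in $\iexpop{n-1}{\Y}$ --- is too weak to produce a computable descending sequence in $\iexpop{n}{\X}$ for the new ordering $\X$. The natural transfer would push the old descending sequence through Hirst's map $h$, but $h$ is only $({\supset},{<})$-monotone, i.e.\ monotone along the \emph{extension} order of the tree (Lemma \ref{h:monotone}); it is not, and need not be, monotone with respect to \KB, so applying $\omop$ to it $n-1$ times does not yield an order-preserving map $\iexpop{n-1}{\Y}\to\iexpop{n}{\X}$. Moreover, the trees $\JT^i(T_Z)$ for $i\geq 1$ in general branch (they have finite dead ends besides the unique infinite path $\J^i(Z)$), and the value of the next-level map at a string $\si$ depends on the previous-level map at \emph{all} of the strings $J(\si)\upto i$, including those lying on dead branches. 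Hence what must be carried through the induction is not a descending sequence (a chain) but a computable, $({\supset},{<})$-monotone map defined on the \emph{whole} tree at each level. This is precisely the device the paper adds for this theorem: the parametrized operator $h_g$ of Definition \ref{def:hgJ}, whose exponents are re-indexed by the previously constructed map $g$, together with Lemma \ref{lemma:monotone}. One sets $g_0=\mathrm{id}$ on $\JT^n(T_Z)$ and $g_{m+1}=h_{g_m}$ on $\JT^{n-m-1}(T_Z)$, all maps targeting iterated exponentials of the single top ordering $\X=\la \JT^n(T_Z),{\KB}\ra$, and only at the very end restricts $g_n$ to the initial segments of $Z$ to obtain the computable descending sequence. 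With that strengthening of the induction your argument closes; without it, the inductive step does not go through.
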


From this, in Section \ref{ssect:rmepsilon}, we obtain the following
reverse mathematics result.

\begin{theorem}\label{thm:ACApr}
Over \RCA, $\forall n\, \WO(\X\mapsto\iexpop n\X)$ is equivalent to
\ACApr.
\end{theorem}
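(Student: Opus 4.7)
The plan is to prove both directions of the equivalence separately.

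For the forward direction, that $\ACApr$ proves $\forall n\,\WO(\iexpop n)$, I would fix $n$ and $\X$ and formalize the upper bound of Theorem \ref{0^n} inside $\ACApr$. Suppose toward contradiction that $\X$ is a well-ordering but $\iexpop n \X$ has a descending sequence $s$. Relativizing Theorem \ref{0^n} to $\X \oplus s$, the jump $(\X \oplus s)^{(n)}$ suffices to build a descending sequence in $\X$, contradicting well-foundedness. This works in $\ACApr$ precisely because $\ACApr$ guarantees the existence of every finite iterated jump of every set.

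For the reverse direction, I would argue in $\RCA + \forall n\,\WO(\iexpop n)$. A first observation is that $\WO(\omop) = \WO(\iexpop 1)$ already yields $\ACA$ by Theorem \ref{Girard}, so single jumps are available. To upgrade to $\ACApr$, I must show, for every set $Y$, that the sequence $\langle Y^{(n)} : n \in \N \rangle$ exists as a set. Fixing $Y$, the construction in the second half of Theorem \ref{0^n} relativized to $Y$ produces, uniformly in $n$, a $Y$-computable linear ordering $\X^Y_n$ with a $Y$-computable descending sequence in $\iexpop n \X^Y_n$, such that the jump of any descending sequence in $\X^Y_n$ computes $Y^{(n)}$. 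By $\WO(\iexpop n)$, $\X^Y_n$ has a descending sequence $d_n$; within $\ACA$ the jump $d_n'$ exists and computes $Y^{(n)}$.

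I expect the main obstacle to be producing the sequence $\langle Y^{(n)} : n \in \N \rangle$ uniformly in $n$, rather than just each term individually, since this is what $\ACApr$ demands. The remedy will be to use $\ACA$ to pick the descending sequences $d_n$ uniformly in $n$ (for example, by taking leftmost paths in the descending-sequence tree of $\X^Y_n$), combined with the uniformity of the construction in Theorem \ref{0^n} in both $Y$ and $n$. Verifying this uniformity in full is the technical heart of the argument and should exploit the well-behaved iteration properties of the new Turing jump formulation advertised in the introduction.
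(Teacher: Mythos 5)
Your first three paragraphs are essentially the paper's proof. The forward direction is exactly Corollary \ref{cor forward ACApr} (formalize Theorem \ref{thm:om forward} iterated $n$ times inside \ACApr). For the reversal the paper likewise first invokes Girard's theorem to get \ACA, then for fixed $Y$ and $n$ builds $\X_Y^n=\la \JT^n(T_Y),{\KB}\ra$, checks in \RCA\ that the descending sequence in $\iexpop n{\X_Y^n}$ exists, applies $\WO(\X\mapsto\iexpop n\X)$ to get a descending sequence in $\X_Y^n$, and uses \ACA\ (via Lemma \ref{lemma:KB} and the inverse map $\K$) to extract from it a set equal to $\J^n(Y)\teq Y^{(n)}$. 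Up to that point your sketch is correct and on the paper's route.

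The last paragraph, however, rests on a misreading of \ACApr. The axiom is $\forall Y\,\forall n\,\exists X\,(X=Y^{(n)})$: each finite jump must exist individually, \emph{not} the single set coding the whole sequence $\la Y^{(n)}:n\in\N\ra$. The latter is Turing-equivalent to $Y^{(\om)}$, i.e.\ it is the content of \ACApl, which is strictly stronger; the $\om$-model $\bigcup_{n}\set{X}{X\leqt 0^{(n)}}$ satisfies \ACApr\ (and hence $\forall n\,\WO(\X\mapsto\iexpop n\X)$) but contains no such uniform sequence. So the ``main obstacle'' you set yourself is both unnecessary and unprovable from the hypothesis --- pursuing it would sink the proof. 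Your argument is already complete once each $Y^{(n)}$ is produced; the only genuine residual work is the routine one of checking, inside \RCA+\ACA, that the set extracted from the descending sequence really satisfies the arithmetic predicate defining $Y^{(n)}$ (the paper does this by pulling the path back through $\K$ step by step and using $\J^n(Y)\teq Y^{(n)}$, provable in \RCA).
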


The first main new result of this paper is obtained in Section
\ref{ssect:eps} and analyzes the complexity behind the epsilon
function.

\begin{theorem}
If $\X$ is a computable linear ordering, and $\epsop_\X$ has a
computable descending sequence, then $0^{(\om)}$ can compute a
descending sequence in $\X$. Conversely, there is a computable linear
ordering $\X$ with a computable descending sequence in $\epsop_\X$ such
that the jump of every descending sequence in $\X$ computes
$0^{(\om)}$.
\end{theorem}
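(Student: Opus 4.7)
The plan is to handle both directions using the machinery developed in Sections \ref{sect:exp} and \ref{ssect:finite} for the finite iterates $\iexpop n \X$. The key structural fact is that every element of $\epsop_\X$ lying below $\eps_x$ (for $x\in\X$) arises by finitely many applications of $\omop^{(\cdot)}$ starting from an ordinal at or below $\eps_{x'}$ for some $x'<_\X x$, so $\epsop_\X$ behaves as a kind of limit of the $\iexpop n \X$'s, and any descending sequence in $\epsop_\X$ eventually lives inside a region that looks like $\iexpop n {\X\upto x}$ for some $n$ and some $x\in\X$.

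For the forward direction, given a computable descending sequence $(a_i)$ in $\epsop_\X$, I would use $0^{(\om)}$ to extract, uniformly in $i$, the largest $x_i\in\X$ with $\eps_{x_i}\le a_i$ together with the tower height $n_i$ needed to express $a_i$ above $\eps_{x_i}$. The sequence $(x_i)$ is weakly $\X$-descending, so either it is strictly descending cofinally often (giving a descending sequence in $\X$ outright) or it eventually stabilizes at some $x_0$; stabilization is detectable from $0^{(\om)}$. From the stabilization point onward $(a_i)$ is descending in a region isomorphic to an interval of $\iexpop m {\X\upto x_0}$ for some $m$, and the analysis behind Theorem \ref{0^n} produces a descending sequence in $\X\upto x_0$ using oracle $0^{(m)}$. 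Gluing across all the stabilization levels still only requires $0^{(\om)}$ uniformly.

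For the converse, the task is to build a single computable $\X$ whose $\epsop_\X$ admits a computable descending sequence but each of whose own descending sequences codes $0^{(\om)}$ via its jump. I would amalgamate the hard orderings $\X_n$ witnessing Theorem \ref{0^n} — for instance as a sum — arranged so that a computable descending sequence through $\epsop_\X$ can be produced by always staying in a finite tower over the $n$th piece at stage $n$, pulling in the computable descending sequence in $\iexpop n {\X_n}$ guaranteed by Theorem \ref{0^n}; while any descending sequence through $\X$ itself must eventually fall into one of the pieces $\X_n$, and hence by the property of $\X_n$ its jump computes $0^{(n)}$, for arbitrarily large $n$, and therefore computes $0^{(\om)}$. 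The main obstacle is designing the amalgamation so that the two requirements mesh: the sequence through $\epsop_\X$ must remain \emph{computable} while every sequence through $\X$ is forced to visit arbitrarily many of the hard pieces. This is where the combinatorially well-behaved computable approximations to iterated Turing jumps and the associated tree operations introduced in Section \ref{sect:exp} should do the essential work, since they let one encode each $0^{(n)}$ inside $\iexpop n {\X_n}$ in a uniform way that survives the amalgamation.
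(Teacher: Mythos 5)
Your forward direction is essentially the paper's argument (Theorem \ref{thm:epsilon forward}): locate the largest constant $\eps_{x_0}$ occurring in $a_0$, use $\eps_{x_0}\leq a_0<\iexp{n_0}{\eps_{x_0}+1}$ together with Lemma \ref{lemma:om eps} to view the tail of the sequence inside $\iexpop{n_0}{\epsop_\X\upto(\eps_{x_0}+1)}$, apply the $\omop$-analysis $n_0$ times, and recurse. One correction: what $0^{(n_0)}$ hands you at each stage is a descending sequence in $\epsop_\X\upto(\eps_{x_0}+1)$, \emph{not} a descending sequence in $\X\upto x_0$ or in some $\iexpop m{\X\upto x_0}$; the descending sequence in $\X$ is only the sequence of constants $x_0>_\X x_1>_\X\cdots$ extracted across the successive stages of the recursion. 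With that repaired, the uniformity of Theorem \ref{thm:om forward} gives the $0^{(\om)}$ bound as you claim.

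The converse contains a genuine gap: amalgamating the hard orderings $\X^n_0$ of Theorem \ref{0^n} as a sum cannot work. In a sum, a descending sequence moves through the pieces in a weakly monotone way, so either it has a tail lying entirely inside a single piece $\X^n_0$ --- in which case its jump computes $0^{(n)}$ for that \emph{one} $n$ and nothing more --- or it meets infinitely many pieces but only finitely many points of each, in which case the hardness of the individual pieces gives you nothing, since that hardness applies only to infinite descending sequences \emph{within} a piece. (And if you order the pieces so that a sequence can march downward through all of them, that march is itself a computable descending sequence in $\X$, destroying the lower bound.) Your own phrasing --- ``must eventually fall into one of the pieces'' yet computes $0^{(n)}$ ``for arbitrarily large $n$'' --- is already in tension with itself. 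The paper avoids this by not stratifying the hardness at all: it introduces an $\om$-jump tree operator $\JT^\om$ and sets $\X=\la\JT^\om(T_Z),\KB\ra$, where $\JT^\om(T_Z)$ has the single path $\J^\om(Z)\teq Z^{(\om)}$, so that by Lemma \ref{lemma:KB} every descending sequence is forced to converge to that one path, which codes all the $0^{(n)}$ simultaneously. The second half --- the computable descending sequence in $\epsop_\X$ --- is likewise not a routine by-product of the finite case: it needs the operator $h^\om_g$ of Definition \ref{def:fgtau}, which interleaves the finite-level maps $h_{f_\tau}$ along the nodes $\tau$ of $\JT^\om(T_Z)$ and uses the constants $\eps_{g(\tau)}$ precisely to dominate the unboundedly tall $\omop$-towers that arise. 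Neither ingredient is present in your sketch, and both are where the actual content of the converse lies.
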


We prove this result in Theorems \ref{thm:epsilon forward} and
\ref{omega}. Then, as a corollary of the proof, we obtain the following
result in Section \ref{ssect:rmepsilon}.

\begin{theorem}\label{thm: espilon vs ACApl}
Over \RCA, $\WO(\X\mapsto\epsop_\X)$ is equivalent to \ACApl.
\end{theorem}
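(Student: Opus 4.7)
The plan is to derive both implications directly from the proof (not merely the statement) of the preceding computability-theoretic theorem, read uniformly in an arbitrary parameter. Both halves are essentially a formal unwinding of the computability arguments.

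For the forward direction, that \ACApl\ proves $\WO(\X\mapsto\epsop_\X)$, fix a well-ordering $\X$ and suppose toward a contradiction that $g$ is a descending sequence in $\epsop_\X$. In \ACApl\ (relativized to $\X\oplus g$) the set $(\X\oplus g)^{(\om)}$ exists. The upper-bound argument from Section \ref{sect:forward} together with the first half of Section \ref{ssect:eps} is arithmetic in $(\X\oplus g)^{(\om)}$ and produces from $g$ a descending sequence in $\X$, contradicting well-foundedness. All that needs checking is that the computable approximations to the Turing jump introduced in Section \ref{sect:exp}, and the extraction of the descending sequence from them, relativize uniformly; they were designed to do so.

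For the reverse direction, first observe that $\WO(\X\mapsto\epsop_\X)$ implies $\WO(\X\mapsto\omop^\X)$, because $\omop^\X$ embeds into $\epsop_\X$ and the image of a descending sequence under an embedding is again descending. Theorem \ref{Girard} then makes \ACA\ available throughout the remainder of the argument. Now, given an arbitrary set $Y$, I would relativize the construction of Theorem \ref{omega} to $Y$ to obtain, in a $Y$-uniform way, a $Y$-computable linear ordering $\X$, a $Y$-computable descending sequence in $\epsop_\X$, and a fixed Turing functional $\Phi$ independent of $Y$ such that $\Phi^{(Y\oplus f)'}=Y^{(\om)}$ for every descending sequence $f$ in $\X$. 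Both $\X$ and the descending sequence in $\epsop_\X$ exist by \ACA, so $\epsop_\X$ is not a well-ordering; by $\WO(\X\mapsto\epsop_\X)$, $\X$ is not a well-ordering either, and we extract a descending sequence $f$ in $\X$ using \ACA. A further use of \ACA\ produces $(Y\oplus f)'$, and $\Delta^0_1$-comprehension defines $Y^{(\om)}$ as $\Phi^{(Y\oplus f)'}$. Since $Y$ was arbitrary this gives \ACApl.

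The main obstacle is to verify that the construction behind Theorem \ref{omega} is genuinely $Y$-uniform in the strong sense required above: the ordering $\X$, the descending sequence in $\epsop_\X$, and above all the Turing functional $\Phi$ computing $Y^{(\om)}$ from the jump of any descending sequence in $\X$ must come from a single index independent of $Y$. The combinatorially transparent version of the Turing jump and its iterates introduced in Section \ref{sect:exp} is tailored precisely to make this uniformity visible, so once Theorem \ref{omega} is in hand the present corollary is a formal unwinding rather than a fresh combinatorial effort.
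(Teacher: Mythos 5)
Your proposal is correct and follows essentially the same route as the paper: the forward direction formalizes the relativized computability argument of Theorem \ref{thm:epsilon forward} in \ACApl, and the reversal relativizes the construction of Theorem \ref{omega}, first securing \ACA\ (you make explicit, via the embedding of $\omop^\X$ into $\epsop_\X$ and Girard's theorem, a step the paper leaves implicit) and then extracting $Y^{(\om)}$ from the jump of a descending sequence in $\X$. The paper phrases the last step as recovering the unique path $\J^\om(Y)$ through the $\om$-Jump Tree and then computing $Y^{(\om)}$ from it in \RCA, which is exactly the content of your fixed functional $\Phi$.
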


Our proof is purely computability-theoretic and plays with the
combinatorics of the $\om$-jump and the epsilon function. By
generalizing the previous ideas, we obtain a new definition of the
$\om$-Turing jump, which we can also approximate by a computable
function on finite strings and by a computable operator on trees. An
important property of our $\om$-Turing jump operator is that it is
essentially a fixed point of the jump operator: for every real $Z$, the
$\om$-Turing jump of $Z$ is equal to the $\om$-Turing jump of the jump
of $Z$, except for the first bit (we mean equal as sequences of
numbers, not only Turing equivalent). Notice the analogy with the
$\epsop$ and $\omop$ operators.

After a draft of the proof of Theorem \ref{thm: espilon vs ACApl} was
circulated, Afshari and Rathjen \cite{AR} gave a completely different
proof using only proof-theoretic methods like cut-elimination, coded
$\om$-models and Sch\"{u}tte deduction chains. They prove that
$\WO(\X\mapsto\epsop_\X)$ implies the existence of countable coded
$\om$-models of \ACA\ containing any given set, and that this in turn
is equivalent to \ACApl. To this end they prove a completeness-type
result: given a set $Z$, they can either build an $\om$-model of \ACA\
containing $Z$ as wanted, or obtain a proof tree of `0=1' in a suitable
logical system with formulas of rank at most $\om$. The latter case
leads to a contradiction as follows. The logical system where we get
the proof tree has cut elimination, increasing the rank of the proof
tree by an application of the $\epsop$ operator. Using
$\WO(\X\mapsto\epsop_\X)$, $\X$ being the Kleene-Brouwer ordering on
the proof tree of `0=1', they obtain a well-founded cut-free proof tree
of `0=1'.

In Section \ref {sect:General case}, we move towards studying the
computable complexity of the Veblen functions. Given a computable
ordinal $\a$, we calibrate the complexity of
$\WO(\X\mapsto\phiop(\a,\X))$ with the following result, obtained by
extending our definitions to $\om^\a$-Turing jumps.

\begin{theorem}\label{thm: veblen vs jumps}
Let $\a$ be a computable ordinal. If $\X$ is a computable linear
ordering, and $\phiop(\a,{\X})$ has a computable descending sequence,
then $0^{(\om^\a)}$ computes a descending sequence in $\X$. Conversely,
there is a computable linear ordering $\X$ such that $\phiop(\a,{\X})$
has a computable descending sequence but every descending sequence in
$\X$ computes $0^{(\om^\a)}$.
\end{theorem}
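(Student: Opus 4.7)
The plan is to follow the same scheme established for $\omop^\X$, $\iexpop n\X$, and $\epsop_\X$, but scaled up to general $\om^\a$--Turing jumps and the Veblen function $\phiop(\a,\cdot)$. First, I would extend by transfinite recursion on $\a$ the framework used in Sections \ref{sect:exp} and \ref{sect:eps} to define an $\om^\a$--Turing jump operator that admits a computable approximation on finite strings and, correspondingly, a computable operator on trees whose paths are precisely the $\om^\a$--jumps of paths in the input tree. The analogy to preserve is the one emphasized after Theorem \ref{thm: espilon vs ACApl}: just as $\epsop_\X$ is a fixed point of $\Y\mapsto\omop^\Y$ and our $\om$--jump of $Z$ equals the $\om$--jump of the jump of $Z$ up to one bit, the operator $\phiop(\a,\X)$ enumerates common fixed points of all $\phiop(\b,\cdot)$ with $\b<\a$, so the $\om^\a$--jump of $Z$ should agree with the $\om^\b$--jump (for all $\b<\a$) of enough iterates of the jump applied to $Z$, modulo a controlled initial segment.

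For the forward direction, assume $\X$ is computable and $Y$ is a computable descending sequence in $\phiop(\a,\X)$. The goal is to apply the tree construction layer by layer to transform $Y$ into a $0^{(\om^\a)}$--computable descending sequence in $\X$. The recursive decomposition exploits the fact that a term of $\phiop(\a,\X)$ can be read as a finite expression built from smaller terms in $\phiop(\b,\X)$ for $\b<\a$ together with an $\X$--indexed block, so that a descending sequence in $\phiop(\a,\X)$ forces, via the combinatorial analysis already used for $\epsop_\X$, a descending sequence in some $\phiop(\b,\X)$ for a suitable $\b<\a$, and eventually in $\X$ itself. The information needed to make the right choices at each layer is exactly what is coded by the iterated approximation to $0^{(\om^\a)}$.

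For the reverse direction, I would mimic the construction used in Theorem \ref{omega}: starting from the computable approximation to $0^{(\om^\a)}$, build a computable linear ordering $\X$ and apply the tree version of the $\om^\a$--jump operator to the trivial tree whose unique path is that approximation. This yields a computable tree inside $\phiop(\a,\X)$ whose leftmost path is a computable descending sequence in $\phiop(\a,\X)$; by construction, however, any descending sequence in $\X$ lets one recover the limit of the approximation and hence compute $0^{(\om^\a)}$. Finally, as in Section \ref{ssect:rmepsilon}, the uniformity of the forward construction together with the lower bound furnishes, over \RCA, the reverse mathematical consequences (here $\Pi^0_{\om^\a}$--\CA) that are stated immediately after this theorem.

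The main obstacle is the limit case. When $\a$ is a limit ordinal, $\phiop(\a,\X)$ is defined via simultaneous fixed points of all $\phiop(\b,\cdot)$ with $\b<\a$, which is qualitatively different from the successor case and forces the definition of the $\om^\a$--jump and of its tree operator at limits to be arranged coherently in the choice of a fundamental sequence, with the fixed-point property behaving uniformly across all $\b<\a$. One must also verify that, although the operator $0^{(\om^\a)}$ depends \emph{a priori} on a presentation of $\a$, the statements of the theorem depend only on the ordinal, which requires showing that the constructions commute with isomorphisms of presentations up to Turing equivalence. Once these delicate bookkeeping issues are settled, the successor step is essentially a relativization of the $\epsop$ argument from Section \ref{ssect:eps}.
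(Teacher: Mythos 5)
Your overall strategy --- transfinite recursion on $\a$ along a fundamental sequence $\a=\sup_i(\a_i+1)$ with $\sum_i\om^{\a_i}=\om^\a$, extending the jump function / jump tree machinery of Sections \ref{sect:exp} and \ref{sect:eps} to an $\om^\a$-jump --- is exactly what the paper does (Theorems \ref{thm:veblen forward} and \ref{alpha}). Your forward direction, though loosely stated, matches the paper's in spirit: from a descending sequence in $\phiop(\a,\X)$ one extracts the largest constant $\varphi_{\a,x_0}$ occurring in the first term, bounds that term by $\varphi_{\b_0}^{n_0}(\varphi_{\a,x_0}+1)$ for some $\b_0<\a$, and applies the recursion hypothesis for $\b_0$ to land in $\phiop(\a,\X\upto x_0)+1$ (same $\a$, smaller $\X$); iterating yields $x_0>_\X x_1>_\X\cdots$. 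Note this argument involves no trees at all; the "tree construction layer by layer" in your second paragraph is a red herring there.

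The genuine gap is in the converse direction, in the step you dispose of in one clause: producing the \emph{computable descending sequence in $\phiop(\a,\X)$}. Taking $\X=\la\JTom{\a}(T_Z),\KB\ra$ and showing that descending sequences in $\X$ compute $0^{(\om^\a)}$ (via Lemma \ref{lemma:KB}, since $\JJom{\a}(Z)$ is the unique path) is the easy half. But the descending sequence in $\phiop(\a,\X)$ is not "the leftmost path of a computable tree inside $\phiop(\a,\X)$" obtained by applying the jump-tree operator --- that operator produces $\X$ itself, nothing inside $\phiop(\a,\X)$. What is needed is a computable $({\supset},{<_{\phiop(\a,\X)}})$-monotone map $\hom{\a}_g\colon T_Z\to\phiop(\a,\X)$ (with $g$ the identity on $\JTom{\a}(T_Z)$), whose values on $Z\upto n$ form the descending sequence. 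Constructing it is the technical heart of the theorem: one would like functions $f_i$ on the successive jump trees satisfying $f_i=h_{f_{i+1}}$, but that recursion runs the wrong way. The paper's resolution (Definitions \ref{def:fgtau} and \ref{def:halphag}) indexes auxiliary functions $f_\tau$ by nodes $\tau$ of the jump tree, anchors them by $f_\tau(\es)=\varphi_{\a,g(\tau)}$, and exploits the fact that evaluating $h_{f_{\tau'}}$ on a string of length $n$ only needs $f_{\tau'}$ on strings of length $<n$ (by (\ref{P5})), so the whole system is well defined by recursion on string length; monotonicity then requires the separate nested induction of Lemma \ref{lemma:halphag monotone}. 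Without this construction the converse does not go through. By contrast, the presentation-independence of $0^{(\om^\a)}$, which you single out as the main obstacle, is standard and not something the proof needs: the theorem is read relative to a fixed presentation of $\a$.
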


This result will follow from Theorem \ref{thm:veblen forward} and
Theorem \ref{alpha}. In Section \ref{ssect:rmVeblen}, as a corollary,
we get the following result.

\begin{theorem}\label{thm: veblen vs PiAlphaCA}
Let $\a$ be a computable ordinal. Over \RCA,
$\WO(\X\mapsto\phiop(\a,\X))$ is equivalent to $\Pi^0_{\om^\a}$-\CA.
\end{theorem}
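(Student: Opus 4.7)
The plan is to derive this equivalence as a corollary of the computability-theoretic Theorem \ref{thm: veblen vs jumps}, by relativizing both directions of that result and verifying everything formalizes in \RCA. Throughout, I would use the characterization (presumably established earlier in the paper, analogous to the $\ACApl \leftrightarrow$ ``$Z^{(\om)}$ exists for all $Z$'' equivalence used for Theorem \ref{thm: espilon vs ACApl}) that over \RCA, the scheme $\Pi^0_{\om^\a}$-\CA\ is equivalent to the statement that for every set $Z$, the iterated Turing jump $Z^{(\om^\a)}$ (defined using the paper's well-behaved $\om^\a$-jump operator) exists as a set.

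For the forward direction, I would assume $\Pi^0_{\om^\a}$-\CA\ and prove $\WO(\X\mapsto\phiop(\a,\X))$. Given a well-ordering $\X$ and a hypothetical infinite descending sequence $f$ in $\phiop(\a,\X)$, I would formalize the upper-bound half of Theorem \ref{thm: veblen vs jumps} relativized to $\X\oplus f$: applying $\Pi^0_{\om^\a}$-\CA, the set $(\X\oplus f)^{(\om^\a)}$ exists, and from it one uniformly extracts an infinite descending sequence in $\X$, contradicting the well-foundedness of $\X$. The key is that the construction in Theorem \ref{thm:veblen forward} (the upper-bound half of Theorem \ref{thm: veblen vs jumps}) is effective in the $\om^\a$-jump of the input data, so its formalization requires exactly the ability to form that jump, which is what $\Pi^0_{\om^\a}$-\CA\ provides.

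For the reverse direction, I would assume $\WO(\X\mapsto\phiop(\a,\X))$ and show $Z^{(\om^\a)}$ exists for every set $Z$. Relativize the lower-bound construction from Theorem \ref{alpha}: for each $Z$, one gets a $Z$-computable linear ordering $\X_Z$ together with a $Z$-computable infinite descending sequence in $\phiop(\a,\X_Z)$, with the property that the jump of any descending sequence in $\X_Z$ computes $Z^{(\om^\a)}$. The hypothesis $\WO(\X\mapsto\phiop(\a,\X))$, applied contrapositively to $\X_Z$, forces $\X_Z$ to have a descending sequence; taking its jump (available via the $\Sigma^0_1$-induction and recursive comprehension of \RCA) and applying the effective reduction yields $Z^{(\om^\a)}$ as a set.

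The main obstacle is the reverse direction, specifically the bookkeeping needed to check that the construction of $\X_Z$ from $Z$ in Theorem \ref{alpha}, and the reduction producing $Z^{(\om^\a)}$ from a descending sequence in $\X_Z$, are both sufficiently uniform and weak to be formalized in \RCA\ plus the hypothesis alone. One has to confirm that nothing in the construction secretly used the well-definedness of $Z^{(\om^\a)}$, and that $\a$ being a computable ordinal (rather than a natural number) does not create subtle difficulties when carrying out the transfinite iteration inside \RCA---in particular, that the paper's $\om^\a$-jump operator on finite strings is provably total in \RCA\ and that its relativization commutes appropriately with the jump, as emphasized in the discussion following Theorem \ref{thm: espilon vs ACApl}.
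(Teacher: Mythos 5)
Your overall architecture matches the paper's: the forward direction is the formalization of the upper bound (the paper's Corollary \ref{cor forward Pi0alpha}), and the reverse direction formalizes the lower-bound construction of Theorem \ref{alpha}, using the equivalence of $\Pi^0_{\om^\a}$-\CA\ with ``$\JJom\a(Z)$ exists for all $Z$'' over \RCA. However, there is one genuine error in your reverse direction: you claim that the jump of the descending sequence in $\X_Z$ is ``available via the $\Sigma^0_1$-induction and recursive comprehension of \RCA.'' It is not. \RCA\ does not prove the totality of the Turing jump operator; indeed, \RCA\ plus ``every set has a jump'' is precisely \ACA. And the jump is genuinely needed here: the linear ordering $\X_Z$ is $\la \JTom\a(T_Z), {\KB}\ra$, and a \KB-descending sequence in a tree only yields a path through the tree after one application of the jump (Lemma \ref{lemma:KB} gives $Y \leqt f'$, not $Y \leqt f$). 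Without that path you cannot recover $\JJom\a(Z)$, so as written your argument stalls exactly at the point where the set-existence content has to appear.

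The repair, which is what the paper does (explicitly in Theorem \ref{ACApr} and implicitly in Theorem \ref{rev:ACApl}), is to first derive \ACA\ from the hypothesis: $\omop^\X$ embeds into $\phiop(\a,\X)$ (e.g.\ send $\la x_0,\dots,x_{k-1}\ra$ to $\varphi_0(\varphi_{\a,x_0}+1)+\dots+\varphi_0(\varphi_{\a,x_{k-1}}+1)$), so $\WO(\X\mapsto\phiop(\a,\X))$ implies $\WO(\X\mapsto\omop^\X)$, which by Girard's Theorem \ref{Girard} yields \ACA\ over \RCA. Only then is the jump of the descending sequence available, after which Lemma \ref{lemma:KB} produces a path $Y$ through $\JTom\a(T_Z)$, the \RCA-formalization of Lemma \ref{lemma:JTomal paths} identifies $Y$ with $\JJom\a(Z)$ (since $Z$ is the unique path of $T_Z$), and $Z^{(\om^\a)}$ then exists by comprehension relative to $Y$. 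With this one step inserted, your proof goes through and coincides with the paper's.
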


Exploiting the uniformity in the proof of Theorem \ref{thm: veblen vs
jumps}, we also obtain a new purely computability-theoretic proof of
Friedman's result (Theorem \ref{Friedman}). Before our proof, Rathjen
and Weiermann \cite{RW} found a new, fully proof-theoretic proof of
Friedman's result. They use a technique similar to the proof of
Afshari and Rathjen mentioned above. Friedman's original proof has
two parts, one computability-theoretic and one proof-theoretic.

The table below shows the systems studied in this paper (with the
exception of \ACApr). The second column gives the proof-theoretic
ordinal of the system, which were calculated by Gentzen, Rathjen,
Feferman, and Sch\"{u}tte. The third column gives the operator $\bfF$
on linear orderings such that $\WO(\bfF)$ is equivalent to the given
system. The last column gives references for the different proofs of
these equivalences in historical order ([MM] refers to this paper).

\begin{center}
\begin{tabular}{|c|c|c|>{\Small}l|}
\hline
System & p.t.o. & $\bfF(\X)$ & references \\
\hline
\ACA & $\eps_0$ & $\omop^\X$ & Girard \cite{Girard}; Hirst \cite{Hirst94}\\
\ACApl & $\varphi_2(0)$ & $\epsop_\X$ & [MM]; Afshari-Rathjen \cite{AR}\\
$\Pi^0_{\om^\a}$-\CA &  $\varphi_{\a+1}(0)$ & $\phiop(\a,\X)$ & [MM]\\
\ATR  & $\Gamma_0$ & $\phiop(\X,0)$ &Friedman \cite{FMW}; Rathjen-Weiermann \cite{RW}; [MM]\\
\hline
\end{tabular}
\end{center}

Notice that in every case, the proof-theoretic ordinal equals
\[
\sup\{\bfF(0), \bfF(\bfF(0)), \bfF(\bfF(\bfF(0))),\dots\}.
\]

\section{Background and definitions}\label{sect:background}

\subsection{Veblen operators and ordinal notation}\label{ssect:Veblen operators}

We already know what the $\om$, $\eps$ and $\varphi$ functions do on
ordinals. In this section we define operators $\omop$, $\epsop$ and
$\phiop$, that work on all linear orderings. These operators are
computable, and when they are applied to a well-ordering, they coincide
with the $\om$, $\eps$ and $\varphi$ functions on ordinals.

To motivate the definition of $\omop^\X$ we use the following
observation due to Cantor \cite{Can97}. Every ordinal below $\om^\a$
can be written in a unique way as a sum
\[
\om^{\b_0}+\om^{\b_1}+\dots+\om^{\b_{k-1}},
\]
where $\a>\b_0\geq \b_1\geq \dots\geq \b_{k-1}$.

\begin{definition}
Given a linear ordering $\X$, $\omop^\X$ is defined as the set of
finite strings  $\la x_0, x_1,\dots, x_{k-1}\ra \in \X^{<\om}$
(including the empty string) where $x_0\geq_\X x_1\geq_\X \dots \geq_\X
x_{k-1}$. We think of $\la x_0, x_1,\dots, x_{k-1}\ra \in \omop^\X $ as
$\om^{x_0}+\om^{x_1}+\dots+\om^{x_{k-1}}$. The ordering on $\omop^\X$
is the lexicographic one: $\la x_0, x_1,\dots,
x_{k-1}\ra\leq_{\omop^\X}\la y_0, y_1,\dots, y_{l-1}\ra$ if either
$k\leq l$ and $x_i=y_i$ for every $i<k$, or for the least $i$ such that
$x_i\neq y_i$ we have $x_i<_\X y_i$.
\end{definition}

We use the following notation for the iteration of the $\omop$
operator.

\begin{definition}\label{iexpop}
Given a linear ordering \X, let $\iexpop 0\X = \X$ and $\iexpop {n+1}\X
= \omop^{\iexpop n\X}$.
\end{definition}

To motivate the definition of the $\epsop$ operator we start with the
following observations. On the ordinals, the closure of the set $\{0\}$
under the operations $+$ and $t\mapsto \om^t$, is the set of the
ordinals strictly below $\eps_0$. The closure of $\{0,\eps_0\}$ under
the same operations, is the set of the ordinals strictly below
$\eps_1$. In general, if we take the closure of $\{0\} \cup
\set{\eps_\b}{\b<\a}$ we obtain all ordinals strictly below $\eps_\a$.

\begin{definition}
Let $\X$ be a linear ordering. We define $\epsop_\X$ to be the set of
formal terms defined as follows:
\begin{itemize}
\item $0$ and $\eps_x$, for $x\in\X$, belong to $\epsop_\X$, and
    are called \lq\lq constants\rq\rq,
\item if $t_1,t_2\in \epsop_\X$, then $t_1+t_2\in\epsop_\X$,
\item if $t\in \epsop_\X$, then $\om^t\in \epsop_\X$.
\end{itemize}
Many of the terms we defined represent the same element, so we need to
find normal forms for the elements of $\epsop_\X$. The definition of
the ordering on $\epsop_\X$ is what one should expect when $\X$ is an
ordinal. We define the normal form of a term and the relation
$\leq_{\epsop_\X}$ simultaneously by induction on terms.

We say that a term $t=t_0+\dots+t_k$ is in  {\em normal form} if either
$t=0$ (i.e.\ $k=0$ and $t_0=0$), or the following holds: (a)
$t_0\geq_{\epsop_\X} t_1\geq_{\epsop_\X}\dots\geq_{\epsop_\X} t_k> 0$,
and (b) each $t_i$ is either a constant or of the form $\om^{s_i}$,
where $s_i$ is in normal form and $s_i\neq\eps_x$ for any $x$.

Every $t\in\epsop_\X$ can be written in normal form by applying the
following rules:
\begin{itemize}
\item $+$ is associative,
\item $s+0=0+s=s$,
\item if $s<_{\epsop_\X} r$, then $\om^s+\om^r=\om^r$,
\item $\om^{\eps_x}=\eps_x$.
\end{itemize}

Given $t=t_0+\dots+t_k$ and $s=s_0+\dots+s_l$ in normal form, we let
$t\leq_{\epsop_\X} s$ if one of the following conditions apply
\begin{itemize}
\item $t=0$,
\item $t=\eps_x$ and, for some $y\geq_\X x$, $\eps_y$ occurs in
    $s$,
\item $t=\om^{t'}$, $s_0=\eps_y$ and $t'\leq_{\epsop_\X}\eps_y$,
\item $t=\om^{t'}$, $s_0=\om^{s'}$ and $t'\leq_{\epsop_\X}s'$,
\item $k>0$ and $t_0<_{\epsop_\X}s_0$,
\item $k>0$, $t_0=s_0$, $l>0$ and $t_1+\dots+t_k \leq_{\epsop_\X}
    s_1+\dots+s_l$.
\end{itemize}
\end{definition}

The observation we made before the definition shows how the $\epsop$
operator coincides with the $\eps$-function when $\X$ is an ordinal
(this includes the case $\X=\es$, when $0$ is the only constant and we
obtain $\eps_0$ as expected).

\begin{definition}\label{iexp}
In analogy with Definition \ref{iexpop}, for $t \in \epsop_\X$ we use
$\iexp nt$ to denote the term in $\epsop_\X$ obtained by applying the
$\om$ function symbol $n$ times to $t$.
\end{definition}

\begin{definition}
If $\X$ is a linear ordering and $x \in \X$, let $\X\upto x$ be the
linear ordering with domain $\set{y\in\X}{y<_\X x}$.
\end{definition}

The following lemma expresses the compatibility of the $\omop$ and
$\epsop$ operators.

\begin{lemma}\label{lemma:om eps}
If $\X$ is a linear ordering, then for every $t\in \epsop_\X$ and $n
\in \N$
\[
\iexpop n{\epsop_\X\upto t} \isom \epsop_\X \upto \iexp nt
\]
via a computable isomorphism. In particular, $\omop^{\epsop_\X\upto t}
\isom \epsop_\X \upto \om^t$.
\end{lemma}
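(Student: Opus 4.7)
The plan is to prove the lemma by induction on $n$, with the real work in the case $n=1$ (the \lq\lq in particular\rq\rq\ clause); the case $n=0$ is immediate since both sides equal $\epsop_\X \upto t$. The governing intuition is that Cantor normal form below $\om^t$ in $\epsop_\X$ expresses an element as a weakly decreasing sum of summands of the form $\om^{s_i}$ or $\eps_{y_i}$, and this is exactly the data of a weakly decreasing sequence of elements of $\epsop_\X \upto t$.

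Concretely, for the $n=1$ case I would define $\varphi \colon \omop^{\epsop_\X \upto t} \to \epsop_\X \upto \om^t$ by sending a weakly decreasing sequence $\la s_0, \ldots, s_{k-1}\ra$ to the normal form of $\om^{s_0} + \om^{s_1} + \cdots + \om^{s_{k-1}}$. For each $s_i$, $\om^{s_i}$ is either already in normal form, or, when $s_i = \eps_x$ for some $x \in \X$, reduces to $\eps_x$; in either case the summands are weakly decreasing because the $s_i$ are, so the displayed sum is in normal form, and $s_i <_{\epsop_\X} t$ implies $\om^{s_i} <_{\epsop_\X} \om^t$ by monotonicity of $\om^{\cdot}$, placing the sum below $\om^t$. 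The inverse is equally explicit: read a normal form $r = r_0 + \cdots + r_{k-1}$ as the sequence $\la s_0, \ldots, s_{k-1}\ra$ with $s_i = u_i$ when $r_i = \om^{u_i}$ and $s_i = \eps_y$ when $r_i = \eps_y$, so that $r_i = \om^{s_i}$ uniformly (using $\om^{\eps_y} = \eps_y$); the $s_i$ are weakly decreasing because the $r_i$ are, and $\om^{s_i} \leq_{\epsop_\X} r <_{\epsop_\X} \om^t$ gives $s_i <_{\epsop_\X} t$. Both directions are computable.

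Order preservation of $\varphi$ is to be checked by unwinding the recursive definition of $\leq_{\epsop_\X}$ on the images of two sequences: a common prefix lets one peel off leading summands via the \lq\lq $k > 0$, $t_0 = s_0$\rq\rq\ clause, and at the first coordinate $j$ of disagreement the strict inequality $\om^{s_j} <_{\epsop_\X} \om^{s_j'}$ follows from $s_j <_{\epsop_\X} s_j'$ via the $\om$-clauses, with care taken to handle correctly the case in which either image summand has been collapsed to a constant $\eps_x$ via $\om^{\eps_x}=\eps_x$. This bookkeeping between constants and genuine $\om^{\cdot}$ summands is the one delicate point and is the step I expect will require the most care.

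For the inductive step, granted the isomorphism at level $n$, I would apply the functor $\omop^{\cdot}$ (a computable isomorphism of linear orderings induces the obvious componentwise computable isomorphism of their $\omop$-exponentials) to obtain $\iexpop{n+1}{\epsop_\X \upto t} = \omop^{\iexpop n {\epsop_\X \upto t}} \isom \omop^{\epsop_\X \upto \iexp nt}$, and then apply the $n=1$ case with $\iexp nt$ in place of $t$ to get $\omop^{\epsop_\X \upto \iexp nt} \isom \epsop_\X \upto \om^{\iexp nt} = \epsop_\X \upto \iexp{n+1}t$. Composing the two yields the desired computable isomorphism.
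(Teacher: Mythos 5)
Your proposal is correct and follows essentially the same route as the paper: the paper also inducts on $n$, and its inductive-step isomorphism $\la t_0,\dots,t_k\ra \mapsto \om^{\psi(t_0)}+\dots+\om^{\psi(t_k)}$ is exactly the composite of your two maps ($\omop^\psi$ followed by your $n=1$ isomorphism). You simply supply more of the verification (normal forms, the $\om^{\eps_x}=\eps_x$ collapse, order preservation) that the paper leaves implicit.
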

\begin{proof}
The proof is by induction on $n$. When $n=0$ the identity is the
required isomorphism. If $\psi: \iexpop n{\epsop_\X\upto t} \to
\epsop_\X \upto \iexp nt$ is an isomorphism, then the function mapping
the empty string to $0$ and $\la t_0,\dots,t_k\ra$ to
$\om^{\psi(t_0)}+\dots+\om^{\psi(t_k)}$ witnesses $\iexpop
{n+1}{\epsop_\X\upto t} \isom \epsop_\X \upto \iexp {n+1}t$.
\end{proof}

To define the $\phiop$ operator we start with the following
observations. If we take the closure of the set $\{0\}$ under the
operations $+$, $t\mapsto \om^t$ and $t\mapsto \eps_t$, we get all the
ordinals up to $\varphi_2(0)$. If we take the closure of
$\{0\}\cup\set{\varphi_2(\b)}{\b<\a}$ we get all the ordinals below
$\varphi_2(\a)$. In general, we obtain $\varphi_\g(\a)$ as the closure
of $\{0\} \cup \set{\varphi_\g(\b)}{\b<\a}$ under the operations $+$,
and $t\mapsto \varphi_\delta(t)$, for all $\delta<\g$.

\begin{definition}\label{def:phiop}
Let $\X$ and $\Y$ be linear orderings. We define $\phiop(\Y,\X)$ to
be the set of formal terms defined as follows:
\begin{itemize}
\item $0$ and $\varphi_{\Y,x}$, for $x\in\X$, belong to
    $\phiop(\Y,\X)$, and are called \lq\lq constants\rq\rq,
\item if $t_1,t_2\in \phiop(\Y,\X)$, then $t_1+t_2\in\phiop(\Y,\X)$,
\item if $t\in \phiop(\Y,\X)$ and $\d\in\Y$, then $\varphi_\d(t)
    \in \phiop(\Y,\X)$.
\end{itemize}
We define the normal form of a term and the relation $\leq_{\phiop(\Y,\X)}$
simultaneously by induction on terms. We write $\leq_\varphi$ instead
of $\leq_{\phiop(\Y,\X)}$ to simplify the notation.

We say that a term $t=t_0+\dots+t_k$ is in  {\em normal form} if either
$t=0$, or the following holds: (a) $t_0\geq_\varphi
t_1\geq_\varphi\dots\geq_\varphi t_k> 0$, and (b) each $t_i$ is either
a constant or of the form $\varphi_\d(s_i)$, where $s_i$ is in normal
form and $s_i\neq\varphi_{\d'}(s_i')$ for $\d'>\d$.

Every $t\in\phiop(\Y,\X)$ can be written in normal form by applying the
following rules:
\begin{itemize}
\item $+$ is associative,
\item $s+0=0+s=s$,
\item if $\varphi_{\d'}(s)<_\varphi \varphi_\d(r)$, then $\varphi_{\d'}(s)+\varphi_\d(r)=\varphi_\d(r)$.
\item if $\d ' >\d$, then $\varphi_\d( \varphi_{\d'}(r))=\varphi_{\d'}(r)$.
\item if $\d\in\Y$ then $\varphi_\d(\varphi_{\Y,r})=
    \varphi_{\Y,r}$.
\end{itemize}
The motivation for the last two items is that if $\d ' >\d$, anything
in the image of $\varphi_{\d'}$ is a fixed point of $\varphi_\d$.

Given $t=t_0+\dots+t_k$ and $s=s_0+\dots+s_l$ in normal form, we let
$t\leq_\varphi s$ if one of the following conditions apply
\begin{itemize}
\item $t=0$,
\item $t=\phiop_{\Y,x}$ and, for some $y\geq_\X x$,
    $\varphi_{\Y,y}$ occurs in $s$,
\item $t=\varphi_\d(t')$, $s_0=\varphi_{\d'}(s')$ and
$\begin{cases}
\d<\d'   \text{ and }   t' \leq_\varphi \varphi_{\d'}(s'),  \text{ or }  \\
\d=\d'   \text{ and }   t' \leq_\varphi s',  \text{ or }    \\
\d>\d'   \text{ and }   \varphi_\d(t') \leq_\varphi s',
\end{cases}$
\item $k>0$ and $t_0<_\varphi s_0$,
\item $k>0$, $t_0=s_0$, $l>0$ and $t_1+\dots+t_k \leq_\varphi
    s_1+\dots+s_l$.
\end{itemize}
\end{definition}

\subsection{Notation for strings and trees}\label{ssect:notation}

Here we fix our notation for sequences (or strings) of natural numbers.
The \emph{Baire space \Bai} is the set of all infinite sequences of
natural numbers. As usual, an element of \Bai\ is also called a
\emph{real}. If $X \in \Bai$ and $n \in \N$, $X(n)$ is the $(n+1)$-st
element of $X$. \Seq\ is the set of all finite strings of natural
numbers. When $\si \in \Seq$ we use $|\si|$ to denote its \emph{length}
and, for $i<|\si|$, $\si(i)$ to denote its $(i+1)$-st element. We write
$\es$ for the \emph{empty string} (i.e.\ the only string of length
$0$), and $\la n \ra$ for the string of length $1$ whose only element
is $n$. When $\si, \tau \in \Seq$, $\si \subseteq \tau$ means that
$\si$ is an \emph{initial segment} of $\tau$, i.e.\ $|\si| \leq |\tau|$
and $\si(i) = \tau(i)$ for each $i<|\si|$. We use $\si \subset \tau$ to
mean $\si \subseteq \tau$ and $\si \neq \tau$. If $X \in \Bai$ we write
$\si \subset X$ if $\si(i) = X(i)$ for each $i<|\si|$. We use $\si
\conc \tau$ to denote the \emph{concatenation} of $\si$ and $\tau$,
that is the string $\rho$ such that $|\rho| = |\si|+|\tau|$, $\rho(i)=
\sigma(i)$ when $i<|\si|$, and $\rho(|\si|+i)=\tau(i)$ when $i<|\tau|$.
If $X \in \Bai$, $\si \in \Seq$ and $t \in \N$, $X \upto t$ is the
initial segment of $X$ of length $t$, while $\si \upto t$ is the
initial segment of $\si$ of length $t$ if $t\leq|\si|$, and $\si$
otherwise.

We fix an enumeration of \Seq, so that each finite string is also a
natural number, and hence can be an element of another string. This
enumeration is such that all the operations and relations discussed in
the previous paragraph are computable. Moreover we can assume that $\si
\subset \tau$ (as strings) implies $\si<\tau$ (as natural numbers). For
an enumeration with these properties see e.g.\ \cite[\S II.2]{Sim99}.

The following operation on strings will be useful.

\begin{definition}\label{def:ell}
If $\si\in\Seq$ is nonempty let $\ell(\si) = \la \si(|\si|-1) \ra$, the
string of length one whose only entry is the last entry of $\si$.
\end{definition}

\begin{definition}
A \emph{tree} is a set $T \subseteq \Seq$ such that $\si \upto t \in T$
whenever $\si \in T$ and $t<|\si|$. If $T$ is a tree, $X \in \Bai$ is
\emph{a path through $T$} if $X \upto t \in T$ for all $t$. We let
$[T]$ be the set of all paths through $T$.
\end{definition}

\begin{definition}\label{def:Tsigma}
If $T$ is a tree and $\si \in \Seq$ we let $T_\si = \set{\rho \in
T}{\rho \subseteq \si \lor \si \subseteq \rho}$.
\end{definition}

\begin{definition}
\KB\ is the usual \emph{Kleene-Brouwer ordering} of $\Seq$: if $\si,
\tau \in \Seq$, we let $\si \KB \tau$ if either $\si \supseteq \tau$ or
there is some $i$ such that $\si \upto i = \tau \upto i$ and $\si(i) <
\tau (i)$.
\end{definition}

The following is well-known (see e.g.\ \cite[Lemma V.1.3]{Sim99}).

\begin{lemma}\label{lemma:KB}
Let $T \subseteq \Seq$ be a tree: $T$ is well-founded (i.e.\ $[T]=\es$)
if and only if the linear ordering $(T, {\KB})$ is well-ordered.
Moreover, if $f\colon \N \to T$ is a descending sequence with respect
to \KB, there exists $Y \in [T]$ such that $Y \leqt f'$.
\end{lemma}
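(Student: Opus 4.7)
The plan is to handle both directions of the biconditional together with the complexity bound. For the easy direction, if $Y\in[T]$, then $Y\upto 0, Y\upto 1, Y\upto 2,\dots$ is a strictly $\KB$-descending sequence in $T$, since each $Y\upto(n+1)$ properly extends $Y\upto n$ and proper extensions are $\KB$-smaller by definition.

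For the substantive direction (which also establishes the moreover clause), I would assume $f\colon\N\to T$ is strictly $\KB$-descending and construct $Y\in[T]$ recursively, maintaining the invariant that $S_n := \set{k}{f(k)\supseteq Y\upto n}$ is infinite; this starts with $S_0=\N$. The key combinatorial observation is the following: whenever $k<k'$ lie in $S_n$ with $|f(k)|,|f(k')|>n$, the condition $f(k')\KBst f(k)$ combined with the fact that $f(k)$ and $f(k')$ agree on $Y\upto n$ forces $f(k')(n)\leq f(k)(n)$, by a short case analysis on the definition of $\KB$. Consequently the values $f(k)(n)$, for those $k\in S_n$ with $|f(k)|>n$, form a non-increasing sequence of naturals and stabilize at some value, which I define to be $Y(n)$.

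To see that the invariant propagates, note that if some $f(k_0)\in S_n$ has length exactly $n$, then every later $k\in S_n$ is forced to satisfy $|f(k)|>n$: otherwise $f(k)=Y\upto n = f(k_0)$ would contradict strict descent. So cofinitely many elements of $S_n$ have length greater than $n$ and eventually hit the stable value $Y(n)$, yielding that $S_{n+1}$ is infinite; and since each $Y\upto n$ is an initial segment of some $f(k)\in T$, we get $Y\in[T]$.

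For the complexity bound, I would characterize $Y(n)=v$ (given $Y\upto n$) by the conjunction of (a) $\exists k$ with $f(k)\supseteq Y\upto n$, $|f(k)|>n$, and $f(k)(n)=v$, and (b) $\forall k$, if $f(k)\supseteq Y\upto n$ and $|f(k)|>n$ then $f(k)(n)\geq v$. Clause (a) is $\Sigma^0_1(f)$ and clause (b) is $\Pi^0_1(f)$, so $Y(n)$ is uniformly $\Delta^0_2(f)$ in the parameter $Y\upto n$ and hence computable from $f'$; induction on $n$ gives $Y\leqt f'$. The main delicate point is the non-increasing observation above; everything else is bookkeeping.
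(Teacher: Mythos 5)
Your proof is correct, and it is essentially the standard argument the paper itself defers to (it cites \cite[Lemma V.1.3]{Sim99} rather than giving a proof): defining $Y(n)$ as the eventual --- equivalently, minimum --- value of $f(k)(n)$ over those $k$ with $f(k)\supseteq Y\upto n$ and $|f(k)|>n$, using the non-increasing observation to keep each $S_{n+1}$ infinite, and reading off $Y\leqt f'$ from the $\Sigma^0_1(f)$/$\Pi^0_1(f)$ characterization. All the delicate points (the case analysis on \KB, the fact that at most one $k\in S_n$ can have $|f(k)|=n$ by strict descent, and the uniformity of the $f'$-computation) are handled correctly.
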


We will need some terminology to describe functions between partial
orderings.

\begin{definition}
Let $f\colon P \to Q$ be a function, $\leq_P$ and $\leq_Q$ be partial
orderings of $P$ and $Q$ respectively, with $<_P$ and $<_Q$ the
corresponding strict orderings. We say that $f$ is \emph{$({<_P},
{<_Q})$-monotone} if for every $x, y \in P$ such that $x <_P y$ we have
$f(x) <_Q f(y)$.
\end{definition}

\subsection{Computability theory notation}\label{ssect:ctnotation}

We use standard notation from computability theory. In particular, for
a string $\si\in \N^{\leq\N}$, $\{e\}^\si (n)$ denotes the output of
the $e$th Turing machine on input $n$, run with oracle $\si$, for at
most $|\si|$ steps (where $|\si| = \infty$ when $\si \in \Bai$). If
this computation does not halt in less than $|\si|$ steps we write
$\{e\}^\si (n) \diverges$, otherwise we write $\{e\}^\si (n)
\converges$. We write $\{e\}^\si_t (n) \converges$ if the computation
halts in less than $\min(|\si|, t)$ steps.

Given $X,Y\subseteq\N$, the predicate $X=Y'$ is defined as usual:
\[
X=Y'\iff \forall e(e\in X\leftrightarrow \{e\}^Y(e) \converges).
\]

\begin{definition}
Given an ordinal $\b$ (or actually any presentation of a linear
ordering with first element 0), we say that $X=Y^{(\b)}$ if
\[
X^{[0]}=Y \ \ , \ \  \forall\g<\b\ (X^{[\g]}={X^{[<\g]}}')\ \text{ and } X=X^{[<\b]}.
\]
where $X^{[\g]}=\set{y}{\la \g,y\ra\in X}$ and $X^{[<\g]} = \set{\la
\d,y \ra}{\d<\g\ \&\ \la \d,y \ra \in X}$.
\end{definition}

\subsection{Subsystems of second order arithmetic}\label{ssect:subsystems}

We refer the reader to \cite{Sim99} for background information on
subsystems of second order arithmetic. All subsystems we consider
extend \RCA\ which consists of the axioms of ordered semi-ring, plus
$\Delta^0_1$-comprehension and $\Sigma^0_1$-induction. Adding
set-existence axioms to \RCA\ we obtain \WKL, \ACA, \ATR,\ and \PCA,
completing the so-called \lq\lq big five\rq\rq\ of reverse mathematics.

In this paper we are interested in \ACA, \ATR, and some theories which
lie between these two. All these theories can be presented in terms of
\lq\lq jump-existence axioms\rq\rq, as follows:
\begin{description}
\item[\ACA] \RCA + $\forall Y\exists X\ (X=Y')$
\item[\ACApr] \RCA + $\forall Y \forall n \exists X\ (X=Y^{(n)})$
\item[\ACApl] \RCA + $\forall Y\exists X\ (X=Y^{(\om)})$
\item[$\Pi^0_\b$-\CA] \RCA + $\b \text{ well-ordered} \land \forall Y\exists X\ (X=Y^{(\b)})$,\\
    where $\b$ is a presentation of a computable
    ordinal\footnote{The system $\Pi^0_\b$-\CA\ is sometimes
    denoted by $(\Pi^0_1\text{-\CA})_\b$ in the literature.}
\item[\ATR] \RCA + $\forall \a(\a\text{ well-ordered}\implies
    \forall Y\exists X\ (X=Y^{(\a)}))$
\end{description}
Notice that $\Pi^0_1$-\CA\ is \ACA\ and $\Pi^0_\om$-\CA\ is \ACApl.
$\Pi^0_\b$-\CA\ is strictly stronger than $\Pi^0_\g$-\CA\ if and only
if $\b\geq\g\cdot\om$. In fact the $\om$-model $\bigcup_{\a<\g\cdot\om}
\set{X}{X \leqt 0^{(\a)}}$ satisfies $\Pi^0_\a$-\CA\ for all
$\a<\g\cdot\om$, but not $\Pi^0_{\g\cdot\om}$-\CA. Each theory in the
above list is strictly stronger than the preceding ones if we assume
$\b \geq \om^2$.

\ACA\ and \ATR\ are well-known and widely studied: \cite{Sim99}
includes a chapter devoted to each of them and their equivalents.
(The axiomatization of \ATR\ given above is equivalent to the usual
one by \cite[Theorem VIII.3.15]{Sim99}.) \ACApl\ was introduced in
\cite{BHS}, where it was shown that it proves Hindman's Theorem in
combinatorics (to this day it is unknown whether \ACApl\ and
Hindman's Theorem are equivalent). \ACApl\ has also been used in
\cite{Shore06} (where it is proved that \ACApl\ is equivalent to
statements asserting the existence of invariants for Boolean
algebras) and in \cite{MM} (where \ACApl\ is used to prove a
restricted version of Fra\"{\i}ss\'{e}'s conjecture on linear orders). \ACApr\
is also featured in \cite{MM}. The computation of its proof-theoretic
ordinal, which turns out to be $\eps_\om$, is due to J\"{a}ger
(unpublished notes, a proof appears in \cite{McA}, and a different
proof is included in \cite{Af-thesis}). The theories $\Pi^0_\b$-\CA\
are natural generalizations of \ACApl.

\section{Forward direction}\label{sect:forward}

In this section we prove the \lq\lq forward direction\rq\rq of Theorems
\ref{Girard}, \ref{thm:ACApr}, \ref{thm: espilon vs ACApl}, \ref{thm:
veblen vs PiAlphaCA}, and \ref{Friedman}. The results in this section
are already known (though often written in different settings) but we
include them as our proofs illustrate how the iterates of the Turing
jump relate with the epsilon and Veblen functions.

The following theorem is essentially contained in Hirst's proof
\cite{Hirst94} of the closure of well-orderings under exponentiation
in \ACA.

\begin{theorem}\label{thm:om forward}
If $\X$ is a $Z$-computable linear ordering, and $\omop^{\X}$ has a
$Z$-computable descending sequence, then $Z'$ can compute a descending
sequence in $\X$.
\end{theorem}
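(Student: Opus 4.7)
My plan is to produce a $Z'$-computable strictly $\X$-descending sequence by analyzing how the initial segments of the $f(i)$'s stabilize as $i\to\infty$. For each $N$, let $\sigma^*_N$ denote the longest common initial segment of $\{f(j):j\geq N\}$. A direct case analysis (on whether $f(i+1)$ is a proper prefix of $f(i)$ or they first differ at some position) shows that $i\mapsto f(i)\upto n$ is weakly $\leq_{\omop^\X}$-decreasing for every $n$, so each $\sigma^*_N$ is a well-defined element of $\omop^\X$, and they weakly extend each other to a (possibly infinite) limit $\sigma^*$. Since the predicate ``$|\sigma^*_N|\geq n$'' is $\Pi^0_1(Z)$, the oracle $Z'$ computes each $\sigma^*_N$ uniformly in $N$.

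The core combinatorial lemma is a strict-descent claim at ``stabilization events'' of $\sigma^*$. Whenever $|\sigma^*_{N_1}|=q\geq 1$ and $|\sigma^*_{N_2}|\geq q+1$ with $N_1<N_2$, choose $k\in[N_1,N_2)$ with $f(k)\upto(q+1)\neq\sigma^*\upto(q+1)$. Either $f(k)=\sigma^*\upto q$, in which case $f(k)\subsetneq f(N_2)$ gives $f(k)<_{\omop^\X}f(N_2)$, contradicting $f$-descent; or $f(k)(q)\neq\sigma^*(q)$, and $f$-descent forces $f(k)(q)>_\X\sigma^*(q)$, while $\omop^\X$-monotonicity gives $f(k)(q)\leq_\X f(k)(q-1)=\sigma^*(q-1)$. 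Either way $\sigma^*(q-1)>_\X\sigma^*(q)$. If $|\sigma^*|=\infty$, there are infinitely many stabilization events $q_1<q_2<\cdots$, and chaining with weak $\X$-monotonicity of $\sigma^*$ yields $\sigma^*(q_1)>_\X\sigma^*(q_2)>_\X\cdots$. If instead $|\sigma^*|=L<\infty$, then $\sigma^*_N=\sigma^*$ for $N$ past some $M$; an analogous argument rules out $f(j)=\sigma^*$ for $j\geq M$ (the next term would have to lie strictly below $\sigma^*$ while still extending it), so $f(j)(L)$ is defined for all $j\geq M$, is weakly $\X$-decreasing, and cannot stabilize (that would lengthen $\sigma^*$), so it contains a strictly $\X$-descending subsequence.

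Computationally, $Z'$ runs two families of searches in parallel: (1) search for successive stabilization events in $(\sigma^*_N)$, and (2) for each candidate finite $\tau\in\X^{<\N}$, search for strict drops at position $|\tau|$ among the indices $j$ with $f(j)\upto|\tau|=\tau$ and $|f(j)|>|\tau|$. By the dichotomy above, at least one of these searches (either (1), or (2) with $\tau=\sigma^*$ in the finite case) produces infinitely many witnesses, and the drops so obtained automatically chain into an $\X$-descending sequence because of weak $\omop^\X$-monotonicity of the truncations. $Z'$ outputs the descending sequence from whichever stream first produces the $(k+1)$-st witness and commits to it thereafter.

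The main obstacle is that distinguishing the two cases of the dichotomy amounts to deciding whether $|\sigma^*|$ is finite, a genuinely $\Sigma^0_2(Z)$ question not answerable by $Z'$ directly. The parallel-search trick sidesteps this: $Z'$ need not \emph{know} which case holds, only that the combinatorial dichotomy guarantees some stream will always produce the needed witnesses, and the weak monotonicity built into $\omop^\X$ ensures that the witnesses from the winning stream form the required strictly $\X$-descending sequence.
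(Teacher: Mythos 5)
Your combinatorial analysis is sound, and it is in essence a repackaging of the argument the paper uses: your stabilized prefixes $\sigma^*_N$, the strict-descent lemma at stabilization events, and the residual analysis at position $L$ in the finite case correspond to the paper's $Z'$-recursion through $\X\times\om$ (there consecutive equal exponents are grouped into blocks $\om^x\cdot m$, but that is cosmetic). The genuine gap is in the last, computational step. Witnesses from your different streams do \emph{not} ``automatically chain'': a witness produced by a stream that eventually stalls can lie $\X$-below every witness the productive stream will ever produce. Concretely, let $\X$ contain $a_0>_\X a_1>_\X\cdots$ together with $b_0>_\X b_1$ where $b_0,b_1<_\X a_n$ for all $n$, and let $f(0)=\la a_0,b_0\ra$, $f(1)=\la a_0,b_1\ra$, $f(j)=\la a_{j-1}\ra$ for $j\geq 2$. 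This is $<_{\omop^\X}$-descending, $\sigma^*=\es$, and the productive stream is $(2)_{\es}$, emitting $a_0>_\X a_1>_\X\cdots$; but stream $(2)_{\la a_0\ra}$ emits the drop $b_0>_\X b_1$ at once and then stalls, and once either $b_i$ has been output the sequence can never be extended. For the same reason ``committing to whichever stream first produces the $(k+1)$-st witness'' is fatal: the first stream to produce a witness may be one that produces only finitely many, while re-running the race for each $k$ gives outputs from different streams that need not be comparable. Since, as you yourself observe, identifying the productive stream is a $\Sigma^0_2(Z)$ problem, the parallel-search device as described does not close this gap.

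The repair is to serialize rather than parallelize, which is exactly what the paper does. Maintain a position $p$ and an index $N$ with the invariant that $f(j)\upto p=f(N)\upto p$ and $|f(j)|>p$ for all $j\geq N$, and at each step ask the single $\Sigma^0_1(Z)$ question ``is there $h>N$ with $f(h)(p)<_\X f(N)(p)$?'', which $Z'$ decides. If yes, advance $N$ to the least such $h$ and record the strict drop; if no, position $p$ has stabilized (your truncation-monotonicity plus descent then rules out $|f(j)|=p+1$ for $j\geq N$), so advance $p$ keeping $N$. Infinitely many consecutive position-advances would force $|f(N)|=\infty$, so infinitely many drops are recorded, and since all recorded values lie along a single chain they form the required $Z'$-computable descending sequence in $\X$. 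Your lemmas already contain everything needed to verify this; only the scheduling has to change, and the point of the change is that each local question is $Z'$-decidable, so the global $\Sigma^0_2(Z)$ question of whether $|\sigma^*|$ is finite never has to be answered.
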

\begin{proof}
Let $(a_k:k\in\N)$ be a $Z$-computable descending sequence in
$\omop^\X$. We can write $a_k$ in the form $\om^{x_{k,0}} \cdot
m_{k,0} + \om^{x_{k,1}} \cdot m_{k,1}+ \dots+ \om^{x_{k,l_k}} \cdot
m_{k,l_k}$ where each $m_{k,0}\in\N$ is positive and $x_{k,i} >_\X
x_{k,i+1}$ for all $i<l_k$.

Using $Z'$, we recursively define a function $f: \N \to \X \times \om$
which is decreasing with respect to the lexicographic ordering $<_{\X
\times \om}$. (We use $x\cdot m$ to denote $\la x,m\ra\in\X \times
\om$.) Each $f(n)$ is of the form $x_{k,i} \cdot m_{k,i}$ for some $k$
and $i\leq l_k$. At the following step, when we define  $f(n+1)$,
either we increase $k$ and leave $i$ unchanged, or, if this is not
possible, we keep $k$ unchanged and increase $i$ by one. We will have
that if $f(n)$ is of the form $x_{k,i} \cdot m_{k,i}$, then $x_{h,j}
\cdot m_{h,j} = x_{k,j} \cdot m_{k,j}$ for all $h>k$ and $j<i$.

Let $f(0) = x_{0,0} \cdot m_{0,0}$. Assuming we already defined $f(n) =
x_{k,i} \cdot m_{k,i}$, we need to define $f(n+1)$. If there exist
$h>k$ such that $x_{h,i} \cdot m_{h,i} <_{\X \times \om} x_{k,i} \cdot
m_{k,i}$, then let $f(n+1)=x_{h,i} \cdot m_{h,i}$ for the least such
$h$. If $x_{h,i} \cdot m_{h,i} \geq_{\X \times \om} x_{k,i} \cdot
m_{k,i}$ for all $h>k$ then we must have $i<l_k$ (otherwise
$a_k>_{\omop^\X} a_{k+1}$ cannot hold) and we can let $f(n+1)=x_{k,i+1}
\cdot m_{k,i+1}$.

It is then straightforward
to obtain a $f$-computable, and hence $Z'$-computable, descending
sequence in $\X$.
\end{proof}

The proof above produces an index for a
$Z'$-computable descending subsequence in $\X$, uniformly in $\X$ and
the $Z$-computable descending sequence in $\omop^\X$.

\begin{corollary}
\ACA$\vdash\WO(\X\mapsto\omop^\X)$.
\end{corollary}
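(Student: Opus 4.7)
The plan is to derive the corollary from Theorem \ref{thm:om forward} by arguing inside $\ACA$ via the contrapositive. Suppose, reasoning in $\ACA$, that $\X$ is a linear ordering such that $\omop^\X$ is not a well-ordering; I want to show that $\X$ is not a well-ordering either. Fix a descending sequence $(a_k:k\in\N)$ in $\omop^\X$, and form the set $Z=\X\oplus(a_k)$, which exists by $\Delta^0_1$-comprehension.

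The key step is to invoke the jump-existence axiom of $\ACA$ to obtain a set $W$ with $W=Z'$. Once $W$ is in hand, I would formalize the construction from the proof of Theorem \ref{thm:om forward}: the function $f\colon\N\to\X\times\om$ defined there is $\Delta^0_1$ in $W$, since at each stage one asks $W$ the $\SI01(Z)$ question \textquotedblleft does there exist $h>k$ with $x_{h,i}\cdot m_{h,i}<_{\X\times\om}x_{k,i}\cdot m_{k,i}$?\textquotedblright, and picks the least such $h$ if it exists. Thus $f$ exists as a set by $\Delta^0_1$-comprehension relative to $W$. Reading off the $\X$-coordinates of $f$ (ignoring repetitions) gives a descending sequence in $\X$, again by $\Delta^0_1$-comprehension. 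This shows $\X$ is not a well-ordering, completing the contrapositive.

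I do not anticipate any real obstacle here. Theorem \ref{thm:om forward} has already supplied the combinatorial content; the only remaining task is to verify that the construction goes through in $\ACA$ rather than just at the level of computability. Since $\ACA$ is exactly the axiom system that adds the Turing jump and is closed under arithmetic comprehension, and the descending sequence in $\X$ is arithmetic in $Z$, this verification is routine. One could equivalently phrase the argument model-theoretically: any $\om$-model of $\ACA$ is closed under Turing jump, so by Theorem \ref{thm:om forward} it contains a descending sequence in $\X$ whenever it contains one in $\omop^\X$, hence the implication $\WO(\X)\to\WO(\omop^\X)$ holds in every such model, and combined with the standard conservation/formalization arguments this yields provability in $\ACA$.
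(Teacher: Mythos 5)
Your proposal is correct and takes essentially the same route as the paper, whose proof of this corollary is the single sentence that the proof of Theorem \ref{thm:om forward} can be formalized within \ACA; you have merely spelled out that formalization (jump-existence gives $W=Z'$, and the function $f$ is $\Delta^0_1$ in $W$). One caveat: your closing model-theoretic rephrasing is not actually a valid alternative, since truth in every $\om$-model of \ACA\ does not by itself yield provability in \ACA, but your direct formalization argument stands on its own.
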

\begin{proof}
The previous proof can be formalized within \ACA.
\end{proof}

\begin{corollary}\label{cor forward ACApr}
\ACApr$\vdash\forall n\, \WO(\X\mapsto\iexpop n\X)$.
\end{corollary}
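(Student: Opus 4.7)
The plan is to iterate the uniform version of Theorem \ref{thm:om forward} exactly $n$ times, exploiting the fact that \ACApr\ asserts the existence of $Z^{(n)}$ for every $Z$ and every $n$.

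Concretely, reason in \ACApr. Fix $n$, let $\X$ be a well-ordering, and suppose toward a contradiction that $f$ is a descending sequence in $\iexpop n\X$. Set $Z = \X \oplus f$, so that $\X$ is $Z$-computable and $f$ is a $Z$-computable descending sequence in $\iexpop n\X = \omop^{\iexpop{n-1}\X}$. One application of the uniform form of Theorem \ref{thm:om forward} produces a $Z'$-computable descending sequence $f_1$ in $\iexpop{n-1}\X$; iterating the argument yields a sequence $f_0, f_1, \dots, f_n$ where $f_0 = f$, each $f_i$ is a descending sequence in $\iexpop{n-i}\X$, and $f_i \leqt Z^{(i)}$. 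In particular $f_n$ is a descending sequence in $\X$, contradicting well-foundedness.

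To formalize this in \ACApr, the axiom $\forall Y\forall n \exists X\ (X=Y^{(n)})$ furnishes $Z^{(n)}$, while iterating the uniform Turing reduction noted just after the proof of Theorem \ref{thm:om forward} gives a primitive recursive function $i \mapsto e_i$ such that $f_i = \{e_i\}^{Z^{(i)}}$ (with $e_0$ the identity index for $f$). Hence the finite sequence $\la f_0, \dots, f_n \ra$ is $\Delta^0_1$-definable from $Z^{(n)}$ and exists by recursive comprehension, and the statement ``each $f_i$ is a descending sequence in $\iexpop{n-i}\X$'' is arithmetical and proved by induction on $i \leq n$, which is available in \ACApr.

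The only real obstacle is uniformity in $n$: one needs the iteration of the uniform reduction to be captured by a single primitive recursion, so that the entire argument is carried out as a $\forall n$ statement rather than as a scheme. With this in hand, taking the contrapositive yields $\forall n\, \WO(\X \mapsto \iexpop n\X)$.
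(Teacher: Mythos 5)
Your proposal is correct and follows essentially the same route as the paper: the paper's proof simply observes that iterating Theorem \ref{thm:om forward} $n$ times shows $Z^{(n)}$ computes a descending sequence in $\X$ from one in $\iexpop n\X$, and that this formalizes in \ACApr. You have merely spelled out the formalization details (the uniform index iteration, existence of the finite sequence of descending sequences by recursive comprehension relative to $Z^{(n)}$, and the arithmetical induction on $i\leq n$) that the paper leaves implicit.
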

\begin{proof}
Theorem \ref{thm:om forward} implies that, given $n$, if $\X$ is a
$Z$-computable linear ordering, and $\iexpop n\X$ has a $Z$-computable
descending sequence, then $Z^{(n)}$ can compute a descending sequence
in $\X$. This can be formalized within \ACApr.
\end{proof}

The following two theorems are new in the form they are stated.
However, they can easily be obtained from the standard proof that
\ACA\ proves that every ordinal below $\varphi_2(0)$ can be proved
well-founded in \ACApl, and that every ordinal below $\Gamma_0$ can
be proved well-ordered in Predicative Analysis \cite{Fef64, Sch77}.

\begin{theorem}\label{thm:epsilon forward}
If $\X$ is a $Z$-computable linear ordering, and $\epsop_{\X}$ has a
$Z$-computable descending sequence, then $Z^{(\om)}$ can compute a
descending sequence in $\X$.
\end{theorem}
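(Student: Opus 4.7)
The plan is to reduce the theorem to finitely iterated applications of Theorem~\ref{thm:om forward} via Lemma~\ref{lemma:om eps}, and then to run this reduction recursively through $\omega$ stages, consuming only a bounded number of Turing jumps at each stage so that altogether $Z^{(\om)}$ suffices. The guiding picture is that one peels off the largest epsilon constant from the descending sequence at every stage of the recursion.

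Given a $Z$-computable descending sequence $(t_k)_{k\in\N}$ in $\epsop_\X$, I would first establish two structural facts about the normal form by induction on term complexity: (i) if $\eps_z$ appears anywhere in the normal form of $t$, then $t\geq_{\epsop_\X}\eps_z$; and (ii) if $\eps_{y}$ is the largest epsilon constant occurring in $t$ (i.e.\ $y$ is $<_\X$-maximal with $\eps_y$ appearing in the normal form of $t$), then $t<_{\epsop_\X}\iexp n{\eps_y+1}$ for some $n\in\N$ computable from $t$, where $\eps_y+1$ denotes the term $\eps_y+\om^0$. If $t_0$ contains no epsilon constant at all, then (i) forces every $t_k$ also to contain no epsilon constant, so the sequence lives inside the $\eps_0$-presentation sitting in $\epsop_\X$ (the terms built from $0$ using $+$ and $\om^{\cdot}$ alone), which is well-founded; this contradicts the existence of an infinite descent.

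Let $y_0$ and $n_0$ be as supplied by (ii) applied to $t_0$. The whole sequence lies in $\epsop_\X\upto\iexp{n_0}{\eps_{y_0}+1}$, which by Lemma~\ref{lemma:om eps} is computably isomorphic to $\iexpop{n_0}{\epsop_\X\upto(\eps_{y_0}+1)}$. Applying Theorem~\ref{thm:om forward} $n_0$ times in succession yields a $Z^{(n_0)}$-computable descending sequence in $\epsop_\X\upto(\eps_{y_0}+1)$; by (i), any term strictly below $\eps_{y_0}$ uses only constants $\eps_z$ with $z<_\X y_0$, so after dropping at most one initial entry (the possible value $\eps_{y_0}$ itself) we obtain a $Z^{(n_0)}$-computable descending sequence in $\epsop_{\X\upto y_0}$ (viewed as a suborder of $\epsop_\X$). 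Recursing, at stage $k$ we convert a $Z^{(N_k)}$-computable descending sequence in $\epsop_{\X\upto y_{k-1}}$ into an element $y_k<_\X y_{k-1}$ together with a $Z^{(N_{k+1})}$-computable descending sequence in $\epsop_{\X\upto y_k}$, for some $N_{k+1}\in\N$. The resulting $(y_k)_{k\in\N}$ is descending in $\X$, and since $Z^{(\om)}$ uniformly computes every $Z^{(N_k)}$ (with indices that can themselves be read off from the construction), the whole sequence $(y_k)$ is $Z^{(\om)}$-computable.

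The main obstacle will be proving the bound in (ii): showing that each normal-form term is dominated by a finite $\om$-tower over $\eps_y+1$ with an explicit and computable tower height. This is where the reduction rule $\om^{\eps_x}=\eps_x$ must be handled carefully, since it makes $\eps_y$ itself a fixed point of $\om^{\cdot}$ and obstructs the naive bound $t<\iexp n{\eps_y}$. Once the bound over $\eps_y+1$ is in hand, everything else is a clean iteration of the $\omop$-level argument of Theorem~\ref{thm:om forward}.
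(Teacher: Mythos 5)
Your proposal is correct and follows essentially the same route as the paper's proof: isolate the largest constant $\eps_{x_0}$ occurring in the first term, bound the sequence by $\iexp{n_0}{\eps_{x_0}+1}$, transfer via Lemma~\ref{lemma:om eps} to $\iexpop{n_0}{\epsop_\X\upto(\eps_{x_0}+1)}$, apply Theorem~\ref{thm:om forward} $n_0$ times, and iterate uniformly to extract $x_0>_\X x_1>_\X\cdots$ computably in $Z^{(\om)}$. The only cosmetic difference is your handling of the no-epsilon-constant case by appealing directly to the well-foundedness of the $\eps_0$-part, where the paper instead applies Theorem~\ref{thm:om forward} finitely many times to reach a contradiction (a choice that also keeps the argument formalizable in \ACApl).
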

\begin{proof}
Let $(a_k:k\in\N)$ be a $Z$-computable descending sequence in
$\epsop_\X$. If no constant term $\eps_{x}$ appears in $a_0$, then
$a_0<\iexp{n_0}0$ for some $n_0$ so that we essentially have a
descending sequence in $\iexpop{n_0}0$. Then, applying $n_0$ times
Theorem \ref{thm:om forward}, we have that $Z^{(n_0)}$ computes a
descending sequence in $0$, a contradiction.

Thus we can let $x_0$ be the largest $x\in \X$ such that $\eps_{x}$
appears in $a_0$. It is not hard to prove by induction on terms that
$\eps_{x_0}\leq a_0<\iexp{n_0}{\eps_{x_0}+1}$ for some $n_0\in\N$. By
Lemma \ref{lemma:om eps}, ${\epsop_\X}\upto \iexp{n_0}{\eps_{x_0}+1}$
is computably isomorphic to $\iexpop{n_0}{\epsop_\X\upto
(\eps_{x_0}+1)}$ and we can view the $a_k$'s as elements of the latter.
Using Theorem \ref{thm:om forward} $n_0$ times, we obtain a
$Z^{(n_0)}$-computable descending sequence in $\epsop_\X\upto
(\eps_{x_0}+1)$. Noticing that the proof of Theorem \ref{thm:om
forward} is uniform, we can apply this process again to the sequence we
have obtained, and get an $x_1<_\X x_0$ and a descending sequence in
$\epsop_\X\upto (\eps_{x_1}+1)$ computable in $Z^{(n_0+n_1)}$ for some
$n_1\in \N$. Iterating this procedure we obtain a
$Z^{(\om)}$-computable descending sequence $x_0>_\X x_1>_\X \dots$ in
$\X$.
\end{proof}

\begin{corollary}\label{cor forward ACApl}
\ACApl $\vdash\WO(\X\mapsto\epsop_\X)$.
\end{corollary}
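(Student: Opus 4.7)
The plan is to formalize the computability-theoretic proof of Theorem \ref{thm:epsilon forward} inside \ACApl, in direct parallel with how Theorem \ref{thm:om forward} gets formalized inside \ACA\ for Corollary \ref{cor forward ACApr}. Reasoning in \ACApl, suppose toward a contradiction that $\X$ is a well-ordering but that $\epsop_\X$ carries a descending sequence $(a_k)_{k\in\N}$. Let $Z$ be a set coding both $\X$ and the sequence $(a_k)$. The defining axiom $\forall Y\,\exists X\,(X=Y^{(\om)})$ of \ACApl\ then furnishes $Z^{(\om)}$, whose columns supply $Z^{(n)}$ for every $n\in\N$ uniformly.

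Next I would check that the construction in the proof of Theorem \ref{thm:epsilon forward} produces a descending sequence $x_0 >_\X x_1 >_\X \dots$ as a $\Delta^0_1$ function of $Z^{(\om)}$. At stage $k$ one has a $Z^{(n_0+\dots+n_k)}$-computable descending sequence inside $\epsop_\X\upto(\eps_{x_k}+1)$; one reads off $x_{k+1}$ and a new bound $n_{k+1}$ by applying Theorem \ref{thm:om forward} $n_{k+1}$ additional times, using Lemma \ref{lemma:om eps} to shuttle between $\epsop_\X\upto\iexp{n}{\eps_{x_k}+1}$ and $\iexpop{n}{\epsop_\X\upto(\eps_{x_k}+1)}$. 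The total number of jumps needed at stage $k$ remains finite, so the entire construction lives inside $Z^{(\om)}$. The existence of the set $\{(k,x_k):k\in\N\}$ then follows by $\Delta^0_1$-comprehension relative to $Z^{(\om)}$, and this set is an infinite descending sequence in $\X$, contradicting the assumption that $\X$ is well-ordered.

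The main point to verify, and the only non-routine step, is the uniformity of Theorem \ref{thm:om forward}: an index for a $Y'$-computable descending sequence in $\X$ must be obtained recursively from an index for a $Y$-computable descending sequence in $\omop^\X$ together with an index for $\X$, with no dependence on $Y$. Inspection of the proof (the function $f$ there is defined by an explicit search using one jump over the given data) shows this uniformity holds. With that in hand, the recursion that defines the $n_k$'s and the $x_k$'s at stage $k$ uses only finitely many columns of $Z^{(\om)}$ named by a primitive recursive function of $k$, so the whole iteration is straightforwardly formalizable in \ACApl.
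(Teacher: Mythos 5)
Your proposal is correct and follows exactly the route the paper takes: the paper's proof of this corollary is literally the one-line observation that the proof of Theorem \ref{thm:epsilon forward} formalizes in \ACApl, and you have simply spelled out the details of that formalization, including the uniformity of Theorem \ref{thm:om forward} that the paper itself notes immediately after its proof. No gaps.
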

\begin{proof}
The previous proof can be formalized within \ACApl.
\end{proof}

\begin{theorem} \label{thm:veblen forward}
Let $\a$ be a $Z$-computable well-ordering. If $\X$ is a $Z$-computable
linear ordering, and $\phiop(\a,{\X})$ has a $Z$-computable descending
sequence, then $Z^{(\om^\a)}$ can compute a descending sequence in
$\X$.
\end{theorem}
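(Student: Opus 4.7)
The plan is to proceed by transfinite induction on $\a$, generalizing the proof of Theorem \ref{thm:epsilon forward}. The base case $\a=0$ is immediate: $\phiop(0,\X)$ contains only sums of the constants $\varphi_{0,x}$ (no function symbols are available), so it is computably isomorphic to $\omop^\X$ via $\varphi_{0,x}\mapsto\la x\ra$, and Theorem \ref{thm:om forward} produces a $Z'=Z^{(\om^0)}$-computable descending sequence in $\X$.

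For the inductive step I would first establish the Veblen analog of Lemma \ref{lemma:om eps}: for every $\b\in\a$ and every $t\in\phiop(\a,\X)$ there is a computable isomorphism
\[
\phiop(\b,\phiop(\a,\X)\upto t)\isom\phiop(\a,\X)\upto\varphi_\b(t)
\]
sending a constant $\varphi_{\b,s}$ of the left-hand side (with $s<_{\phiop(\a,\X)}t$) to the term $\varphi_\b(s)$ and commuting with $+$ and with the function symbols $\varphi_\d$ for $\d<_\a\b$. Its well-definedness is verified by induction on normal-form terms, using the rule $\varphi_\d(\varphi_{\d'}(r))=\varphi_{\d'}(r)$ for $\d<\d'$ from Definition \ref{def:phiop}: every term below $\varphi_\b(t)$ in $\phiop(\a,\X)$ can be rewritten without using $\varphi_\g$ for $\g\geq_\a\b$ except through the new constants $\varphi_{\b,s}$. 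Iterating the lemma $n$ times yields $\phiop(\a,\X)\upto\varphi_\b^n(t)\isom\phiop(\b,\phiop(\b,\dots\phiop(\b,\phiop(\a,\X)\upto t)\dots))$ with $n$ nestings.

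Given a $Z$-computable descending sequence $(a_k)$ in $\phiop(\a,\X)$, I let $x_0$ be the largest $x\in\X$ with $\varphi_{\a,x}$ appearing in $a_0$; if no such $x_0$ existed, $a_0$ would belong to $\phiop(\a,\es)$, which is a computable well-ordering, contradicting the descent. Since $a_0$ is a finite formal term, there exist $\b_0\in\a$ (bounding the $\varphi$-subscripts occurring in $a_0$) and $n_0\in\N$ (bounding its nesting depth) with $a_0<_{\phiop(\a,\X)}\varphi_{\b_0}^{n_0}(\varphi_{\a,x_0}+1)$, where $\varphi_{\b_0}^{n_0}$ denotes the $n_0$-fold iteration of $\varphi_{\b_0}$. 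Applying the iterated isomorphism lets me view the tail of $(a_k)$ as a descending sequence in an $n_0$-fold nesting of $\phiop(\b_0,\cdot)$ over $\phiop(\a,\X)\upto(\varphi_{\a,x_0}+1)$, and the inductive hypothesis applied $n_0$ times (each peel costing $\om^{\b_0}$ Turing jumps) produces a $Z^{(\om^{\b_0}\cdot n_0)}$-computable descending sequence inside $\phiop(\a,\X)\upto(\varphi_{\a,x_0}+1)$. Iterating this construction $\om$ times generates $x_0>_\X x_1>_\X\dots$ in $\X$, where the $i$-th round contributes $\om^{\b_i}\cdot n_i$ extra jumps; in the successor case $\a=\gamma+1$ every $\b_i\leq\gamma$ so $\sum_i\om^{\b_i}\cdot n_i\leq\om^\gamma\cdot\om=\om^\a$, and in the limit case each $\om^{\b_i}<\om^\a$ and the $\om$-fold ordinal sum remains $\leq\om^\a$. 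Hence $(x_i)$ is computable in $Z^{(\om^\a)}$.

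The main obstacle is isolating the correct Veblen analog of Lemma \ref{lemma:om eps} and verifying, by a careful induction on normal-form terms, that the proposed translation is well-defined and order-preserving; a secondary obstacle is the ordinal-arithmetic bookkeeping confirming that the total number of jumps used across all $\om$ rounds stays bounded by $\om^\a$.
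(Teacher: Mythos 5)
Your proposal follows essentially the same route as the paper's proof: effective transfinite recursion on $\a$, extraction of the largest constant $\varphi_{\a,x_0}$ occurring in $a_0$, the bound $a_0<\varphi_{\b_0}^{n_0}(\varphi_{\a,x_0}+1)$, the computable isomorphism peeling off $n_0$ layers of $\phiop(\b_0,\cdot)$ at a cost of $\om^{\b_0}\cdot n_0$ jumps via the inductive hypothesis, and the same ordinal bookkeeping showing the total stays below $\om^\a$. The only point to tighten is the degenerate case: rather than asserting outright that $\phiop(\a,\es)$ is a computable well-ordering (which is essentially what the induction is in the business of proving), run the same peeling argument with $0$ in place of $\varphi_{\a,x_0}$ so as to land a descending sequence in a trivial ordering, which is how the paper handles it.
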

\begin{proof}
By $Z$-computable transfinite recursion on $\a$, we define a computable
procedure that given a $Z$-computable index for a linear ordering $\X$
and for a descending sequence in $\phiop(\a,{\X})$, it returns a
$Z^{(\om^\a)}$-computable index for a descending sequence in $\X$. Let
$(a_k:k\in\N)$ be a computable descending sequence in
$\phiop(\a,{\X})$. Let $x_0$ be the largest $x\in \X$ such that the
constant term $\varphi_{\a,x}$ appears in $a_0$ (if no $\varphi_{\a,x}$
appears in $a_0$, just use $0$ in place of $\varphi_{\a,x_0}$ in the
argument below). It is not hard to prove by induction on terms that
$\varphi_{\a,x_0}\leq a_0<\varphi_{\b_0}^{n_0}(\varphi_{\a,x_0}+1)$ for
some $\b_0<\a$ and $n_0\in\N$, (where $\varphi^{n_0}_\b(z)$ is obtained
by applying the $\varphi_\b$ function symbol $n_0$ times to $z$). It
also not hard to show that $\phiop(\a,{\X}) \upto {\varphi_{\b_0}^{n_0}
(\varphi_{\a,x_0}+1)}$ is computably isomorphic to $\varphi^{n_0}(\b_0,
\phiop(\a,{\X\upto {x_0}})+1)$ (where $\phiop^{n_0}(\b,\Z)$ is obtained
by applying the $\phiop(\b,\cdot)$-operator on linear orderings $n_0$
times to $\Z$). Using the induction hypothesis $n_0$ times, we obtain a
$Z^{(\om^{\b_0}\cdot n_0)}$-computable descending sequence in
$\phiop(\a,{\X\upto {x_0}})+1$. Then, we apply this process again to
the sequence we have obtained, and get $x_1<_\X x_0$ and a descending
sequence in $\phiop(\a,{\X\upto {x_1}})+1$ computable in
$Z^{(\om^{\b_0} \cdot n_0+\om^{\b_1}\cdot n_1)}$ for some $\b_1<\a$ and
$n_1\in \N$. Iterating this procedure we obtain a $Z^{(\om^\a)}$
descending sequence $x_0>_\X x_1>_\X \dots$ in $\X$.
\end{proof}

\begin{corollary}\label{cor forward Pi0alpha}
Let $\a$ be a computable ordinal. Then
$\Pi^0_{\om^\a}$-\CA$\vdash\WO(\X\mapsto\varphi(\a,\X))$.
\end{corollary}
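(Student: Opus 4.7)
The plan is to simply observe that the argument of Theorem \ref{thm:veblen forward} can be carried out inside $\Pi^0_{\om^\a}$-\CA, since the only non-elementary ingredient used there is the existence of the $\om^\a$-th iterate of the Turing jump of an arbitrary set, which is exactly what $\Pi^0_{\om^\a}$-\CA\ supplies. Concretely, I would work by contradiction: assume an instance of $\WO(\X\mapsto\phiop(\a,\X))$ fails, so there is a linear ordering $\X$ which is a well-ordering but for which $\phiop(\a,\X)$ admits an infinite descending sequence $(a_k:k\in\N)$.

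The next step is to form, using the jump-existence axiom of $\Pi^0_{\om^\a}$-\CA, a set $W$ with $W = Z^{(\om^\a)}$ for a $Z$ coding $\X$, $\a$, and $(a_k)$ together. Then I would run the recursion described in the proof of Theorem \ref{thm:veblen forward}: at each stage one reads off the leading constant $\varphi_{\a,x_i}$ of the current term, bounds the term by $\varphi_{\b_i}^{n_i}(\varphi_{\a,x_i}+1)$ with $\b_i<\a$, invokes Lemma \ref{lemma:om eps}-style computable isomorphisms to view the tail of the sequence as living in $\phiop^{n_i}(\b_i,\phiop(\a,\X\upto x_i)+1)$, and appeals inductively on $\a$ to the construction at stage $\b_i$ (which only needs $Z^{(\om^{\b_i}\cdot n_i)}$, hence is available below $W$). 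This yields a strictly $<_\X$-descending sequence $x_0>_\X x_1>_\X\cdots$, which is definable from $W$ and therefore exists as a set by $\Delta^0_1$-comprehension relative to $W$, contradicting the assumption that $\X$ is a well-ordering.

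The main thing to verify is that the recursive construction from Theorem \ref{thm:veblen forward}, which is presented externally as a transfinite recursion on $\a$, is available inside $\Pi^0_{\om^\a}$-\CA; this is precisely the point of stating the jump-existence axioms for presentations of computable ordinals, so the bookkeeping of the $\om^{\b_i}\cdot n_i$ amounts to a primitive recursive manipulation of notations below $\om^\a$ that \RCA\ handles, and all the set parameters we ever need are uniformly computable from $W$. The only real subtlety, and the step I would be most careful about, is checking that the syntactic normal-form manipulations on $\phiop$-terms and the computable isomorphism $\phiop(\a,\X)\upto\varphi_{\b_0}^{n_0}(\varphi_{\a,x_0}+1)\cong\phiop^{n_0}(\b_0,\phiop(\a,\X\upto x_0)+1)$ are provably correct in \RCA; once that is in place the rest of the formalization is routine, and the corollary follows.
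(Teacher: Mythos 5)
Your proposal is correct and matches the paper's approach: the paper's proof of this corollary is literally the one-line observation that the proof of Theorem \ref{thm:veblen forward} can be formalized within $\Pi^0_{\om^\a}$-\CA, which is exactly what you carry out (in rather more detail, including the genuine subtleties about internalizing the transfinite recursion on $\a$ and verifying the term-manipulation lemmas in \RCA).
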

\begin{proof}
The previous proof can be formalized within $\Pi^0_{\om^\a}$-\CA\ for
a fixed computable $\a$.
\end{proof}

\begin{corollary}\label{cor forward ATR}
\ATR$\vdash\WO(\X \mapsto \phiop(\X,0))$.
\end{corollary}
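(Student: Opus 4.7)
The plan is to reduce the corollary to Theorem~\ref{thm:veblen forward} by restricting the descending sequence to an initial segment of $\phiop(\X,0)$ determined by an initial segment of $\X$ itself. Arguing in \ATR, I would suppose that $\X$ is a well-ordering and, for contradiction, that $(a_k)_{k\in\N}$ is a descending sequence in $\phiop(\X,0)$. Let $\d_0 \in \X$ be the largest element such that $\varphi_{\d_0}$ appears in the normal form of $a_0$, i.e.\ the outer subscript of the leading term of $a_0$ (if $a_0=0$ there is nothing to do).

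The main step I would establish is that every $a_k$ uses only subscripts $\leq_\X \d_0$. A straightforward induction on terms, exploiting the Veblen fixed-point identities baked into the normal form of Definition~\ref{def:phiop}, shows that any $b \in \phiop(\X,0)$ whose normal form mentions some $\varphi_{\d'}$ with $\d' >_\X \d_0$ satisfies $b >_{\varphi} a_0$ (the point being that terms built with subscripts beyond $\d_0$ already exceed every term built with subscripts $\leq_\X \d_0$, since the latter are fixed points of $\varphi_\d$ for all $\d \leq_\X \d_0$). Setting $\Y = \set{y \in \X}{y\leq_\X \d_0}$, which is well-ordered as an initial segment of $\X$, the set of formal terms in $\phiop(\X,0)$ using only subscripts from $\Y$ is literally identical, as a linear ordering, to $\phiop(\Y,\es)$. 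Hence the whole descending sequence lives in $\phiop(\Y,\es)$.

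Now I would apply Theorem~\ref{thm:veblen forward} with $\a = \Y$ and inner linear ordering $\es$: the hypothesis is a descending sequence in $\phiop(\Y,\es)$, which we have just produced (computable from $\X$ and the original sequence), while the conclusion would be a descending sequence in $\es$, an obvious contradiction. Therefore $\phiop(\X,0)$ has no descending sequence, so $\WO(\X\mapsto\phiop(\X,0))$ holds.

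The only real thing to check is that the invocation of Theorem~\ref{thm:veblen forward} formalizes in \ATR, since the theorem as stated concerns a fixed $Z$-computable ordinal $\a$, whereas here $\a=\Y$ is extracted from the descending sequence. This is exactly what \ATR\ is designed to deliver: the proof of Theorem~\ref{thm:veblen forward} only appeals to the existence of $Z^{(\om^\a)}$, i.e.\ of iterated Turing jumps along the well-ordering $\om^\a$, and arithmetical transfinite recursion along any well-ordering is an axiom of \ATR. The initial-segment/fixed-point induction used in the main step is purely syntactic and poses no further difficulty, so this point about formalization is the only genuine obstacle.
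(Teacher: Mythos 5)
Your proof is correct and is essentially the paper's own argument: take a real $Z$ computing both the well-ordering and the putative descending sequence, apply Theorem~\ref{thm:veblen forward} with the well-ordering as the first argument and the empty ordering as the second, and use the \ATR\ axiom to obtain $Z^{(\om^\a)}$, whereupon the theorem yields an (absurd) descending sequence in the empty ordering. Your preliminary restriction to the initial segment $\Y$ determined by the largest subscript occurring in $a_0$ is harmless but unnecessary: since $\X$ is already assumed to be a well-ordering, Theorem~\ref{thm:veblen forward} applies directly with $\a=\X$, and the paper does exactly that.
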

\begin{proof}
Let $\a$ be a well-ordering and assume, towards a contradiction, that
there exists a descending sequence in $\phiop(\a,0)$. Let $Z$ be a real
such that both $\a$ and the descending sequence are $Z$-computable. By
Theorem \ref{thm:veblen forward} $Z^{(\om^\a)}$ (which exists in \ATR)
computes a descending sequence in $0$, which is absurd.
\end{proof}

\section{Ordinal exponentiation and the Turing Jump}\label{sect:exp}

In this section we give a proof of the second part of Theorem
\ref{Hirst}. Our proof is a slight modification of Hirst's proof, and
prepares the ground for the generalizations in the following sections.

We start by defining a modification of the Turing jump operator with nicer combinatorial properties.
We will then define two computable approximations to this jump operator, one from strings to strings, and the other one from trees to trees.

\begin{definition}\label{def:J}
Given  $Z \in \Bai$, we define the sequence of {\em $Z$-true stages} as follows:
\[
t_{n} = \max\{ t_{n-1}+1, \mu t(\{n\}^Z_t(n)\converges)\},
\]
starting with $t_{-1}=1$ (so that $t_n\geq n+2$). If there is no $t$
such that $\{n\}^Z_t(n)\converges$, then the above definition gives
$t_n=t_{n-1}+1$. So, $t_n$ is a stage where $Z$ can correctly guess
$Z'\upto n+1$ because $\forall m \leq n (m\in Z'\iff \{m\}^{Z\upto
t_n}(m) \converges)$. With this in mind, we define the \emph{Jump
operator} to be the function $\J\colon \Bai \to \Bai$ such that for
every $Z \in \Bai$ and $n \in \N$,
\[
\J(Z)(n) = Z\upto t_n,
\]
or equivalently
\[
\J(Z)=\la Z\upto t_0, Z\upto t_1, Z\upto t_2, Z\upto t_3, \dots\ra
\]
\end{definition}

Here is a sample of this definition: {\Small
\begin{eqnarray*}
\phantom{Z=\la Z(0), Z(1), Z(} t_0\phantom{), Z(3), Z(4), Z(5), Z(} t_1\phantom{),Z(} t_2\phantom{), Z(8), Z(9), Z(10), Z(11),Z(}t_3\phantom{),\cdots \ra}\\
Z= \la \underbrace{\underbrace{\underbrace{\underbrace{ Z(0), Z(1) }_{\J(Z)(0)}, Z(2), Z(3), Z(4), Z(5)}_{\J(Z)(1)}, Z(6)}_{\J(Z)(2)}, Z(7), Z(8), Z(9), Z(10), Z(11)}_{\J(Z)(3)}, Z(12),\cdots \ra
\end{eqnarray*}
}

Of course, $\J(Z) \teq Z'$ for every $Z$ as $n\in Z'\iff
\{n\}^{\J(Z)(n)}(n)\converges$. So, from a computability viewpoint,
there is no essential difference between $\J(Z)$ and the usual $Z'$.

\begin{definition}\label{def: J si}
The \emph{Jump function} is the mapping $J \colon \Seq \to \Seq$
defined as follows. For $\si \in \Seq$, define $t_{n} = \max\{
t_{n-1}+1, \mu t(\{n\}^{\si\upto t}(n)\converges)\},$ starting with
$t_{-1}=1$ (so that $t_n\geq n+2$). Again, if there is no $t$ such that
$\{n\}^{\si\upto t}(n)\converges$, then the above definition gives
$t_n=t_{n-1}+1$. Let $J(\si)=\la\si\upto t_0, \si\upto t_1, \dots,
\si\upto t_{k-1}\ra$ where $k$ is least such that $t_k>|\si|$.

Given $\tau\in J(\Seq)$, we let $K(\tau)$ be the last entry of $\tau$
when $\tau\neq\es$, and $K(\es)=\es$.
\end{definition}

\begin{remark}
Since we can computably decide whether $\{n\}^{\si\upto t}(n)
\converges$, the Jump function is computable. The computability of $K$
is obvious.
\end{remark}

The following Lemma lists the key properties of $J$ and $K$. We will
refer to these properties as (\ref{P1}), \dots, (\ref{P6}).

\begin{lemma} \label{lemma:J K}
For every $\si, \tau'\in \Seq$ and $\tau\in J(\Seq)$,
\begin{enumerate} \renewcommand{\theenumi}{P\arabic{enumi}}
\item $J(\si)=\es$ if and only if $|\si|\leq 1$. \label{P1}
\item $K(J(\si))= \si$ when $|\si|\geq 2$.     \label{P2}
\item $J(K(\tau))=\tau$.\label{P3}
\item If $\si\neq\si'$ and at least one has length $\geq 2$, then
    $J(\si)\neq J(\si')$.   \label{P4}
\item $|J(\si)|<|\si|$ and $|K(\tau)|>|\tau|$ except when
    $\tau=\es$.    \label{P5}
\item If $\tau'\subset\tau$ then $\tau'\in J(\Seq)$ and
    $K(\tau')\subset K(\tau)$.  \label{P6}
\end{enumerate}
\end{lemma}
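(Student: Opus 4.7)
The plan is to exploit two initial observations: (i) inductively $t_n\geq n+2$, so the stage sequence grows by at least one at each step, and (ii) whenever $\mu t(\{n\}^{\si\upto t}(n)\converges)$ is defined, its value is at most $|\si|$, because the convergence predicate unfolds to halting in fewer than $\min(|\si|,t)$ steps and this is already witnessed at $t\leq|\si|$. The linchpin for everything else is the following key fact: if $J(\si)\neq\es$, i.e.\ $k\geq 1$, then $t_{k-1}=|\si|$. Indeed, (ii) yields $t_k\leq\max(t_{k-1}+1,|\si|)$; if we had $t_{k-1}<|\si|$, then $t_{k-1}+1\leq|\si|$ and we would also obtain $t_k\leq|\si|$, contradicting the minimality of $k$ with $t_k>|\si|$.

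Once this is in place, (\ref{P1})--(\ref{P4}) are short. Property (\ref{P1}) follows because $t_0=\max(2,\mu t)\leq\max(2,|\si|)$: if $|\si|\leq 1$ this is $>|\si|$, forcing $k=0$, and if $|\si|\geq 2$ this is $\leq|\si|$, forcing $k\geq 1$. Property (\ref{P2}) is just the key fact: the last entry of $J(\si)$ is $\si\upto t_{k-1}=\si\upto|\si|=\si$. Property (\ref{P3}) splits on whether $\tau=\es$ (handled by (\ref{P1}) applied to $K(\es)=\es$) or $\tau=J(\si)$ with $|\si|\geq 2$, in which case (\ref{P2}) gives $J(K(\tau))=J(\si)=\tau$. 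Property (\ref{P4}) is then left-inversion: strings of length $\geq 2$ are recoverable from their $J$-image via $K$, while strings of length $\leq 1$ all map to $\es$ which differs from $J(\si')$ for $|\si'|\geq 2$.

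Property (\ref{P5}) combines the key fact with (i): $|J(\si)|=k$ and $t_{k-1}\geq k+1$, so the key fact $t_{k-1}=|\si|$ yields $k\leq|\si|-1<|\si|$ whenever $k\geq 1$ (with $\si=\es$ being the natural edge case). Symmetrically, for $\tau=J(\si)\neq\es$, $|K(\tau)|=|\si|=t_{k-1}\geq k+1>k=|\tau|$.

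The one item requiring real work is (\ref{P6}). Given $\tau'\subsetneq\tau=J(\si)$, write $\tau'=\la\si\upto t_0,\dots,\si\upto t_{k'-1}\ra$ for some $0\leq k'<k$, and set $\si'=\si\upto t_{k'-1}$ (with $\si'=\es$ when $k'=0$); I would show $J(\si')=\tau'$ by comparing the stage sequences $(t_n)$ for $\si$ and $(t'_n)$ for $\si'$. The heart of the argument is that for every $t\leq|\si'|=t_{k'-1}$ one has $\si\upto t=\si'\upto t$, so on this range the predicates $\{n\}^{\si\upto t}(n)\converges$ and $\{n\}^{\si'\upto t}(n)\converges$ agree; together with $t_n\leq t_{k'-1}=|\si'|$ for $n<k'$, this forces $t'_n=t_n$ by induction on $n<k'$. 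Then $t'_{k'}\geq t'_{k'-1}+1>|\si'|$ immediately terminates the iteration for $\si'$ at length $k'$, giving $J(\si')=\tau'$ and hence $K(\tau')=\si\upto t_{k'-1}\subsetneq\si\upto t_{k-1}=K(\tau)$. The delicate point I would check most carefully is ruling out the case in which $\mu t$ computed against $\si$ exceeds $|\si'|$ for some $n<k'$: that would force $t_n\geq\mu t>|\si'|=t_{k'-1}$, contradicting monotonicity of $(t_n)$, so this bad case simply cannot occur.
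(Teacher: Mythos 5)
Your proof is correct and takes essentially the same route as the paper's: the key fact that $t_{k-1}=|\si|$ when $J(\si)\neq\es$ is exactly the observation the paper isolates to prove (\ref{P2}), and the remaining items are derived from it in the same way. Your verification in (\ref{P6}) that the stage sequences for $\si$ and $\si'=\si\upto t_{k'-1}$ agree up to $k'$ (including ruling out a $\mu t$ exceeding $|\si'|$) just spells out the step the paper dismisses as ``easy to check.''
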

\begin{proof}
(\ref{P1}) is obvious from the definition.

(\ref{P2}) follows from the fact that, when $|\si|\geq 2$, $t_{k-1} =
|\si|$ (using the notation of Definition \ref{def: J si}). In fact
$t_{k-1} \leq |\si|$ by definition of $k$, and if $t_{k-1}<|\si|$ then
we have either $\{k\}^{\si\upto t_k}(k)\converges$ (and hence $t_k \leq
|\si|$) or $t_k = t_{k-1}+1 \leq |\si|$, against the definition of $k$.

(\ref{P3}) follows from (\ref{P2}) and $K(\es)=\es$.

(\ref{P4}) follows immediately from (\ref{P1}) and (\ref{P2}).

The first part of (\ref{P5}) follows from $t_n\geq n+2$. The second
part is a consequence of the first, (\ref{P1}) and (\ref{P2}).

(\ref{P6}) is obvious when $\tau'=\es$, using the second part of
(\ref{P5}). Otherwise we have $\tau'=\la \si\upto t_0, \si\upto t_1,
\dots ,\si\upto t_j\ra$ for some $j<k-1$, so that $K(\tau')=\si\upto
t_j \subset \si\upto t_{k-1}=K(\tau)$. It is easy to check that
$\tau'=J(\si\upto t_j)$.
\end{proof}

The following Lemma explains how the Jump function approximates the
Jump operator.

\begin{lemma}\label{lemma:J approx J}
Given $Y,Z\in \Bai$, the following are equivalent:
\begin{enumerate}
\item $Y = \J(Z)$;
\item for every $n$ there exists $\si_n \subset Z$ with $|\si_n|>n$
    such that $Y\upto n =J(\si_n)$.
\end{enumerate}
\end{lemma}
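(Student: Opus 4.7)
The plan is to prove the two implications separately, exploiting properties (\ref{P1})--(\ref{P6}) of $J$ and $K$ together with the key observation that if $\sigma \subset Z$ is long enough, then the true stages computed relative to $\sigma$ agree with those computed relative to $Z$.

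For (1)$\Rightarrow$(2), I would take $\sigma_n = Z \upto t_{n-1}$, where $t_{n-1}$ is the $(n-1)$st true stage of $Z$ (and $\sigma_0 = Z\upto 1$). Since $t_{n-1} \geq n+1$, we have $|\sigma_n| > n$ and $\sigma_n \subset Z$. The true stages of $\sigma_n$ for $i \leq n-1$ coincide with those of $Z$, because $\{i\}^{\sigma_n \upto t}(i) \converges$ iff $\{i\}^{Z\upto t}(i) \converges$ whenever $t \leq |\sigma_n|$, and $t_i \leq t_{n-1} = |\sigma_n|$ for $i \leq n-1$. Moreover $t_n > t_{n-1} = |\sigma_n|$, so the index $k$ in Definition \ref{def: J si} equals $n$, giving $J(\sigma_n) = \langle Z\upto t_0, \ldots, Z\upto t_{n-1}\rangle = \J(Z)\upto n = Y\upto n$.

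For (2)$\Rightarrow$(1), the key observation is that (\ref{P2}) forces $\sigma_n = K(Y\upto n)$ once $n \geq 1$ (so $|\sigma_n| \geq 2$), and thus $\sigma_n$ is uniquely determined by $Y$. Together with (\ref{P5}) and the hypothesis $\sigma_n \subset Z$, the sequence $\sigma_1 \subset \sigma_2 \subset \cdots$ exhausts $Z$. Writing $t_i^\sigma$ for the $i$th true stage computed relative to a string or real $\sigma$, the equation $J(\sigma_n) = Y\upto n$ gives, for every $i < n$, $Y(i) = \sigma_n \upto t_i^{\sigma_n} = Z \upto t_i^{\sigma_n}$. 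Since $Y(i)$ does not depend on $n$, the number $t_i^{\sigma_n}$ is constant in $n > i$; call it $\tau_i$. An easy induction on $i$ shows that $t_i^{\sigma} \leq t_i^Z$ whenever $\sigma \subset Z$, with equality once $|\sigma|$ exceeds the halting time of $\{j\}^Z(j)$ for all $j \leq i$ for which it halts (when it does not halt, the recursion automatically yields the same "trivial" value on both sides). Because $|\sigma_n| \to \infty$, this common value $\tau_i$ must equal $t_i^Z$, so $Y(i) = Z\upto t_i^Z = \J(Z)(i)$, i.e., $Y = \J(Z)$.

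I expect no serious obstacle: the main subtlety is the verification that $t_i^{\sigma_n}$ stabilizes to $t_i^Z$, which requires carefully separating the case where $\{i\}^Z(i)$ halts from the case where it does not. The bookkeeping is handled by a straightforward induction, and everything else reduces to direct applications of properties (\ref{P1})--(\ref{P6}).
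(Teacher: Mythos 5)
Your proof is correct and follows essentially the same route as the paper's: your choice $\sigma_n = Z\upto t_{n-1}$ is exactly the paper's $K(Y\upto n)$, and both directions rest on the same key observation that the true stages computed from a sufficiently long prefix of $Z$ coincide with those computed from $Z$ itself. The only difference is organizational: for (2)$\Rightarrow$(1) the paper argues by contradiction at the least coordinate of disagreement, while you argue directly that $t_i^{\sigma_n}$ stabilizes to $t_i^Z$ as $|\sigma_n|\to\infty$; both hinge on the same fact that a disagreement at coordinate $i$ can only arise when $\{i\}^Z(i)$ halts after stage $|\sigma_n|$, which is ruled out for large $n$.
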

\begin{proof}
Suppose first that $Y = \J(Z)$. When $n=0$ let $\si_n=Z\upto1$, which
works by (\ref{P1}). When $n>0$ let $\si_n = K(Y\upto n) = K(Y\upto n)
= \J(Z)(n-1) \subset Z$. If $\{0\}^Z (0)\converges$ then $Y(0) \subset
Z$ is such that $\{0\}^{Y(0)} (0)\converges$ and $Y(0) \subseteq \si_n$
so that also $\{0\}^{\si_n} (0)\converges$ and $J(\si_n)(0) = Y(0)$. If
$\{0\}^Z (0)\diverges$ then $Y(0) = Z\upto2 = \si_n\upto2 =
J(\si_n)(0)$. This is the base step of an induction that, using the
same argument, shows that $Y(i)=J(\si_n)(i)$ for every $i<n$. Thus $Y
\upto n \subseteq J(\si_n)$. By (\ref{P6}), we have $Y \upto n \in
J(\Seq)$ and we can apply (\ref{P3}) and (\ref{P5}) to obtain $Y \upto
n = J(\si_n)$ and $|\si_n|>n$.

Now assume that (2) holds, and suppose towards a contradiction that $Y
\neq \J(Z)$. Let $n$ be least such that $Y(n-1) \neq \J(Z)(n-1)$. If
$\si_n\subset Z$ is such that $Y\upto n = J(\si_n)$ we have
$J(\si_n)(n-1) \neq \J(Z)(n-1)$. This can occur only if
$\{n-1\}^{\si_n}(n-1) \diverges$ and $\{n-1\}^Z(n-1) \converges$, which
implies $n'>|\si_n|$, where $n'=|\J(Z)(n-1)|$. Notice that for any
$m>n'$ we have $J(Z\upto m)(n-1) = \J(Z)(n-1)$ and hence $J(Z\upto
m)(n-1) \neq Y(n-1)$. This contradicts the existence of
$\si_{n'}\subset Z$ with $|\si_{n'}|>n'$ such that $Y\upto {n'} =
J(\si_{n'})$.
\end{proof}

The following corollary is obtained by iterating the Lemma.

\begin{corollary}\label{cor:Jm approx JM}
For every $m>0$, given $Y,Z\in \Bai$, the following are equivalent:
\begin{enumerate}
\item $Y = \J^m(Z)$;
\item for every $n$ there exists $\si_n \subset Z$ with $|\si_n|
    \geq n+m$ such that $Y\upto n =J^m(\si_n)$.
\end{enumerate}
\end{corollary}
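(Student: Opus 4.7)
The plan is to induct on $m$, with the case $m=1$ being Lemma \ref{lemma:J approx J} (the bound $|\sigma_n|\geq n+1$ coincides with $|\sigma_n|>n$). For the inductive step I factor $\J^{m+1}(Z)=\J(\J^m(Z))$ and combine Lemma \ref{lemma:J approx J} with the inductive hypothesis.

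The $(1)\Rightarrow(2)$ direction at level $m+1$ is a straightforward composition of witnesses: given $Y=\J(\J^m(Z))$, Lemma \ref{lemma:J approx J} yields $\tau\subset\J^m(Z)$ with $|\tau|>n$ and $Y\upto n=J(\tau)$, and the $(1)\Rightarrow(2)$ direction of the inductive hypothesis at parameter $|\tau|$ produces $\sigma_n\subset Z$ with $|\sigma_n|\geq|\tau|+m\geq n+m+1$ and $J^m(\sigma_n)=\J^m(Z)\upto|\tau|=\tau$, so that $Y\upto n=J(J^m(\sigma_n))=J^{m+1}(\sigma_n)$.

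For $(2)\Rightarrow(1)$ at level $m+1$, set $\tau_n:=J^m(\sigma_n)$. Then $J(\tau_n)=Y\upto n$ and iterated (\ref{P5}) gives $|\tau_n|>n$. By the injectivity property (\ref{P4}) and the inverse property (\ref{P3}), the witnesses are forced to be canonical: $\tau_n=K(Y\upto n)$ and $\sigma_n=K^{m+1}(Y\upto n)$. Iterated (\ref{P6}) applied to $Y\upto n\subset Y\upto n'$ then shows that $(\tau_n)_n$ and $(\sigma_n)_n$ are $\subseteq$-increasing chains; let $\tau_\infty\in\Bai$ be the union of the $\tau_n$'s, well-defined since $|\tau_n|\to\infty$. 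Lemma \ref{lemma:J approx J} applied to $(Y,\tau_\infty)$ with witnesses $\tau_n\subset\tau_\infty$ yields $Y=\J(\tau_\infty)$. Setting $\hat\sigma_n:=K^m(\tau_\infty\upto n)$---which by iterated (\ref{P6}) lies in $K^m(\tau_n)=\sigma_n\subset Z$---one checks $|\hat\sigma_n|\geq n+m$ and $J^m(\hat\sigma_n)=\tau_\infty\upto n$, so the $(2)\Rightarrow(1)$ direction of the inductive hypothesis applied to $(\tau_\infty,Z)$ gives $\tau_\infty=\J^m(Z)$. Combining, $Y=\J(\J^m(Z))=\J^{m+1}(Z)$.

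The main obstacle is the compatibility check in the $(2)\Rightarrow(1)$ direction: one cannot naively appeal to monotonicity of $J^m$ on nested initial segments of $Z$, because enlarging $\sigma\subset Z$ may cause a previously divergent Turing computation to halt and thereby alter an early entry of $J(\sigma)$. The fix is to exploit injectivity (\ref{P4}) to identify the witnesses as canonical iterated $K$-preimages of $Y\upto n$, and to use the coherence property (\ref{P6}) to assemble them into a single real $\tau_\infty$ on which the inductive hypothesis can be applied.
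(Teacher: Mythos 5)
Your proof is correct and takes essentially the same route as the paper, whose entire proof is the one-line remark that the corollary ``is obtained by iterating the Lemma''; your induction on $m$ is precisely that iteration, written out in full. The extra care in the $(2)\Rightarrow(1)$ direction---using (\ref{P4}) and (\ref{P3}) to force the witnesses to be the canonical iterated $K$-preimages and (\ref{P6}) to assemble them into $\tau_\infty$---correctly supplies the detail the paper leaves implicit.
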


The Jump function leads to the definition of the Jump Tree.

\begin{definition}
Given a tree $T \subseteq \Seq$ we define the \emph{Jump Tree of
$T$} to be
\[
\JT(T)=\set{J(\si)}{\si\in T}.
\]
\end{definition}

The following lemmas summarize the main properties of the Jump Tree.

\begin{lemma}\label{lemma:JTcomp}
For every tree $T$, $\JT(T)$ is a tree computable in $T$.
\end{lemma}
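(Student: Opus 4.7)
The plan is to exploit the fact that $J$ and $K$ act as mutual inverses between strings of length at least two and nonempty elements of $J(\Seq)$ by (\ref{P2}) and (\ref{P3}), together with the compatibility with initial segments recorded in (\ref{P6}). These two properties essentially do all the work; the lemma is a translation of them into the language of trees.

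For the tree property, I would fix $\tau \in \JT(T)$, write $\tau = J(\si)$ with $\si \in T$, and take an arbitrary $\tau' \subseteq \tau$. If $\tau' = \es$, then $\tau' = J(\es)$ and $\es \in T$, so $\tau' \in \JT(T)$. Otherwise $\tau \neq \es$, so $|\si| \geq 2$ by (\ref{P1}) and $K(\tau) = \si$ by (\ref{P2}); (\ref{P6}) then gives $\tau' \in J(\Seq)$ with $K(\tau') \subset K(\tau) = \si$, whence $K(\tau') \in T$ because $T$ is a tree, and finally $\tau' = J(K(\tau')) \in \JT(T)$ by (\ref{P3}).

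For $T$-computability, the procedure on input $\tau$ is to first decide whether $\tau \in J(\Seq)$ and, if so, to extract the unique preimage before making a single call to the $T$-oracle. Concretely: if $\tau = \es$ answer yes; otherwise let $\si$ be the last entry of $\tau$ (which would equal $K(\tau)$ were $\tau$ in $J(\Seq)$), use the computability of $J$ to evaluate $J(\si)$, and test $J(\si) = \tau$. By (\ref{P3}) and (\ref{P4}) this test succeeds precisely when $\tau \in J(\Seq)$, and then $\si$ is the unique preimage of $\tau$ under $J$. Hence $\tau \in \JT(T)$ iff $\tau = \es$, or $J(\si) = \tau$ and the $T$-oracle confirms $\si \in T$.

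I do not foresee any real obstacle: the combinatorial content has been packaged into (\ref{P1})--(\ref{P6}), so each of the two assertions reduces to a short bookkeeping argument. The only point that needs a moment of care is making sure the ``check whether $\tau \in J(\Seq)$'' step does not implicitly assume $\tau \in J(\Seq)$, which is why one computes $J$ of the purported preimage and tests equality rather than trying to invert $J$ directly.
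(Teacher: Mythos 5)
Your proof is correct and follows essentially the same route as the paper's: both the tree property and the $T$-computability are read off from (P1)--(P6), with membership in $\JT(T)$ decided by testing whether $J$ applied to the putative preimage (the last entry of $\tau$) returns $\tau$ and then querying the $T$-oracle once. The only nitpick is that on input $\tau=\es$ the answer should be ``yes iff $\es\in T$'' rather than an unconditional ``yes'' (a difference only for the empty tree), which is immaterial here.
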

\begin{proof}
$\JT(T)$ is a tree because if $\tau \subset J(\si)$ for $\si\in T$,
then $\tau=J(K(\tau))$ (by (\ref{P6}) and (\ref{P3})) and $K(\tau)\in
T$ (since by (\ref{P6}), (\ref{P2}) and (\ref{P1}), $K(\tau) \subset
K(J(\si)) \subseteq \si$).

$\JT(T)$ is computable in $T$ because $\tau\in \JT(T)$ if and only if
$\tau=J(K(\tau))$ (which is equivalent to $\tau \in J(\Seq)$ by
(\ref{P3})) and $K(\tau)\in T$.
\end{proof}

\begin{lemma}\label{lemma:JT paths}
For every tree $T$, $[\JT(T)] = \set{\J(Z)}{Z \in [T]}$.
\end{lemma}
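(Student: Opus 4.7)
The plan is to prove the two inclusions separately, using Lemma \ref{lemma:J approx J} together with the combinatorial properties (\ref{P1})--(\ref{P6}) of $J$ and $K$. Both directions are short given the machinery already developed; the main point is to identify, for each $Y \in [\JT(T)]$, the correct preimage $Z \in [T]$ with $\J(Z) = Y$, and the operator $K$ is precisely what produces this $Z$.

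For the inclusion $\set{\J(Z)}{Z\in[T]}\subseteq[\JT(T)]$, I would fix $Z\in[T]$ and apply Lemma \ref{lemma:J approx J}: for every $n$ there is $\si_n\subset Z$ with $|\si_n|>n$ such that $\J(Z)\upto n = J(\si_n)$. Since $\si_n\subset Z$ and $Z$ is a path through $T$, we have $\si_n\in T$, and therefore $\J(Z)\upto n = J(\si_n)\in\JT(T)$. Because this holds for every $n$, $\J(Z)\in[\JT(T)]$.

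For the converse inclusion, I would take $Y\in[\JT(T)]$ and define $Z$ as the direct limit of the strings $K(Y\upto n)$. More precisely: $Y\upto n\in\JT(T)\subseteq J(\Seq)$ for each $n\geq1$, so $K(Y\upto n)$ is defined, and by (\ref{P6}) the sequence $(K(Y\upto n))_{n\geq 1}$ is $\subset$-increasing, while (\ref{P5}) gives $|K(Y\upto n)|>n$. Hence there is a unique $Z\in\Bai$ with $K(Y\upto n)\subset Z$ for every $n\geq 1$. The proof of Lemma \ref{lemma:JTcomp} shows $K(Y\upto n)\in T$ for all $n$, so $Z\in[T]$.

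Finally I would verify $Y=\J(Z)$ by appealing again to Lemma \ref{lemma:J approx J}: for $n\geq 1$ set $\si_n=K(Y\upto n)$, which lies in $Z$, satisfies $|\si_n|>n$ by (\ref{P5}), and gives $J(\si_n)=J(K(Y\upto n))=Y\upto n$ by (\ref{P3}); for $n=0$ pick $\si_0=Z\upto 1$, for which $J(\si_0)=\es=Y\upto 0$ by (\ref{P1}). The hypothesis of Lemma \ref{lemma:J approx J}(2) is thus satisfied, so $Y=\J(Z)$, completing the proof. The only real obstacle is the bookkeeping around $n=0$ and making sure $K$ is applied only to non-empty strings, but (\ref{P1}) and the convention $K(\es)=\es$ handle this cleanly.
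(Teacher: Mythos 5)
Your proof is correct and follows essentially the same route as the paper's: the first inclusion is verbatim the paper's argument via Lemma \ref{lemma:J approx J}, and for the converse your $Z=\bigcup_{n\ge1}K(Y\upto n)$ coincides with the paper's $Z=\bigcup_n Y(n)$, since $K(Y\upto n)$ is just the last entry $Y(n-1)$. Your verification that $Y=\J(Z)$ via condition (2) of Lemma \ref{lemma:J approx J} is simply a more explicit version of the step the paper leaves to the reader.
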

\begin{proof}
First let $Z \in [T]$. Since by Lemma \ref{lemma:J approx J} for every
$n \in \N$, $\J(Z) \upto n = J(\si)$ for some $\si \subset Z$, so
$\J(Z) \upto n \in \JT(T)$. This implies $\set{\J(Z)}{Z \in [T]}
\subseteq [\JT(T)]$.

To prove the other inclusion, fix $Y \in [\JT(T)]$, notice that $Y(n)
\subset Y(n+1)\in\Seq$ for every $n$, and let $Z
=\bigcup_{n\in\N}Y(n)\in \Bai$. Observe that, again by Lemma
\ref{lemma:J approx J}, $Y = \J(Z)$ and $Z\in [T]$.
\end{proof}

We can now define the $Z$-computable linear ordering of theorem
\ref{Hirst}: let $\X_Z = \la \JT(T_Z), {\KB} \ra$ where $T_Z = \set{Z
\upto n}{n \in \N}$. Note that $\X_Z$ is indeed a linear ordering and,
by Lemma \ref{lemma:JTcomp}, it is $Z$-computable. Since $Z$ is the
unique path in $T_Z$, by Lemma \ref{lemma:JT paths} $\J(Z)$ is the
unique path in $\JT(T_Z)$. Moreover, for every $\tau = J(\si) \in
\JT(T_Z)$ we have that either $\tau \subset \J(Z)$ or there is some $i$
such that $\tau \upto i = \J(Z) \upto i$ and $\tau(i) \neq \J(Z)(i)$.
This can only happen if $\{i\}^\si(i) \diverges$ and $\{i\}^Z(i)
\converges$, so that $\tau(i) \subset \J(Z)(i)$. By our assumption on
the coding of strings, we have $\tau(i) < \J(Z)(i)$ and hence $\tau
\KBst \J(Z) \upto |\tau|$.

Let $\la \tau_n \ra_{n \in \N}$ be an infinite \KBst-descending
sequence in $\JT(T_Z)$. If $\tau_n \not\subset \J(Z)$ for some $n$ then
$\tau_m \KBst \tau_n \KBst \J(Z)\upto|\tau_m|$ for all $m>n$, which by
Lemma \ref{lemma:KB} implies the existence of a path in $\JT(T_Z)$
different from $\J(Z)$, a contradiction. Therefore any infinite
descending sequence in $\X_Z$ consists only of initial segments of
$\J(Z)$ and hence computes $\J(Z) \teq Z'$.

We still need to prove the existence of a $Z$-computable descending
sequence in $\omop^{\X_Z}$. To this end we use of the following
function.

\begin{definition}
Let $T$ be a tree and order $\JT(T)$ by \KB. Define $h\colon T \to
\omop^{\JT(T)}$ by
\[
h(\si) = \left( \sum_{\substack{i<|J(\si)|\\ \{i\}^\si(i)\diverges}}
\om^{J(\si) \upto i} \right) + \om^{J(\si)} \cdot 2
\]
for $\si\neq \es$ and $h(\es) = \om^{\es}\cdot 3$.
\end{definition}

The sum above is written in \KB-decreasing order, so that indeed
$h(\si) \in \omop^{\JT(T)}$.

Since $J$ is computable, $h$ is computable as well.

The proof below should help the reader understand the motivation for the definition above.

\begin{lemma}\label{h:monotone}
$h$ is $({\supset}, {<_{\omop^{\JT(T)}}})$-monotone.
\end{lemma}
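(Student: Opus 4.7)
The plan is to show that $\si \supsetneq \si'$ implies $h(\si) <_{\omop^{\JT(T)}} h(\si')$. By transitivity, it suffices to handle a one-step extension $\si = \si' \conc \la a \ra$. The base cases $|\si'| \leq 1$ are checked directly: by (\ref{P1}), $J(\tau) = \es$ whenever $|\tau| \leq 1$, so $h(\es) = \om^\es \cdot 3$ and $h(\la b\ra) = \om^\es \cdot 2$, while for $|\si| \geq 2$ the Cantor normal form of $h(\si)$ carries at most one copy of $\om^\es$ (contributed only by the index $0$ when $0 \in I(\si)$); in either base case the required inequality reduces to a straightforward lex comparison in $\omop^{\JT(T)}$.

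For $|\si'| \geq 2$ I would compare the sequences of true stages defining $J(\si)$ and $J(\si')$; these agree as long as the defining computations query only oracle bits within the first $|\si'|$ positions. Two scenarios arise. In (A) the true stages agree up to index $k' - 1$ with $k' := |J(\si')|$, so $J(\si) \upto k' = J(\si')$; property (\ref{P4}) rules out $J(\si) = J(\si')$ and forces $J(\si) \supsetneq J(\si')$. In (B) there is a least index $i^* < k'$ at which the true stages disagree; inspection of the defining clauses shows this requires $\{i^*\}^\si(i^*) \converges$ while $\{i^*\}^{\si'}(i^*) \diverges$, so $i^* \in I(\si') \setminus I(\si)$ and $J(\si)(i^*) \supsetneq J(\si')(i^*)$, whence $J(\si)(i^*) > J(\si')(i^*)$ as natural numbers.

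The main obstacle is the matching step: showing that $I(\si) \cap [0, *) = I(\si') \cap [0, *)$, where $* = k'$ in Case (A) and $* = i^*$ in Case (B). The easy direction is that divergence on $\si$ implies divergence on $\si'$, since more oracle only produces more convergences. The delicate direction is to exclude that some computation divergent on $\si'$ becomes convergent on $\si$ solely by exploiting the single new bit; this is ruled out by the constraint that the relevant true stage is $\leq |\si'|$, which leaves no room for the new bit to be queried in the computation. Once this matching is secured, $h(\si)$ and $h(\si')$ share the Cantor-normal-form prefix $\sum_{i \in I(\si') \cap [0, *)} \om^{J(\si') \upto i}$, and the comparison reduces to their tails.

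Finally I would compare the tails. In Case (A), every exponent in the tail of $h(\si)$ has the form $J(\si) \upto i$ for some $i \geq k'$; the exponent equals $J(\si')$ when $i = k'$ and is a strict extension of $J(\si')$ (hence $\KBst$-smaller) when $i > k'$. Thus the tail of $h(\si)$ contains $\om^{J(\si')}$ at most once, followed by $\om$-terms with strictly smaller exponents (including $\om^{J(\si)} \cdot 2$), and so is strictly less than $\om^{J(\si')} \cdot 2$, the tail of $h(\si')$; the factor of $2$ in the definition of $h$ is essential here. In Case (B), the tail of $h(\si')$ begins with $\om^{J(\si') \upto i^*}$, while every exponent in the tail of $h(\si)$ has the form $J(\si) \upto i$ with $i > i^*$, which properly extends $J(\si) \upto i^* = J(\si') \upto i^*$ and is therefore $\KBst$-smaller. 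Either way, $h(\si) <_{\omop^{\JT(T)}} h(\si')$.
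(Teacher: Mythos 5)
Your proof is correct and follows essentially the same route as the paper's: the base case with the multiplicity-$3$ term at $\es$, then the dichotomy between the divergence sets agreeing below $|J(\si')|$ (giving $J(\si)\supsetneq J(\si')$ and a drop in the coefficient of $\om^{J(\si')}$ from $2$ to at most $1$) versus a least index where $\si'$ diverges but $\si$ converges (making a term of $h(\si')$ disappear from $h(\si)$). The extra bookkeeping you supply about matching true stages below the first disagreement is exactly what the paper leaves implicit, and the reduction to one-step extensions is an inessential organizational choice.
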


\begin{proof}
Suppose $\rho, \si\in T$ are such that $\rho \supset \si$; we want to
show that $h(\rho) <_{\omop^{\JT(T)}} h(\si)$.

If $\si=\es$ then $\om^\es$ occurs with multiplicity $3$ in $J(\si)$
and with multiplicity at most $2$ in $J(\rho)$. Since $\es$ is the
\KB-maximum element in \Seq (and hence also in $\JT(T)$), this implies
$h(\rho) <_{\omop^{\JT(T)}} h(\si)$.

If $\si\neq\es$ then $J(\si)\neq J(\rho)$ by (\ref{P4}). Since
$\si\subset\rho$, if $\{i\}^\rho(i) \diverges$ then $\{i\}^\si(i)
\diverges$ as well. Thus there are two possibilities. If for all
$i<|J(\si)|$, $\{i\}^\si(i) \diverges$ whenever $\{i\}^\rho(i)
\diverges$ then $J(\si) \subset J(\rho)$ and the first difference
between $h(\si)$ and $h(\rho)$ is the coefficient of $\om^{J(\si)}$,
which in $h(\si)$ is $2$ and in $h(\rho)$ is either $1$ or $0$
(depending on whether $\{|J(\si)|\}^\rho(|J(\si)|) \diverges$ or not).
In any case, $h(\rho) <_{\omop^{\JT(T)}} h(\si)$. If instead for some
$i<|J(\si)|$, $\{i\}^\si(i) \diverges$ and $\{i\}^\rho(i) \converges$
let $i_0$ be the least such $i$. Then the first difference between
$h(\si)$ and $h(\rho)$ occurs at $\om^{J(\si \upto i_0)}$, which
appears in $h(\si)$ but not in $h(\rho)$. Again, we have $h(\rho)
<_{\omop^{\JT(T)}} h(\si)$.
\end{proof}

We can now finish off the proof of the second part of Theorem
\ref{Hirst}. The sequence $\la h (Z \upto n) \ra_{n \in \N}$ is
$Z$-computable and strictly decreasing in $\omop^{\X_Z}$ by Lemma
\ref{h:monotone}.

Obviously our proof yields the following generalization of the second
part of Theorem \ref{Hirst}.

\begin{theorem}\label{Hirst:gen}
For every real $Z$ there exists a $Z$-computable linear ordering $\X$
with a $Z$-computable descending sequence in $\omop^{\X}$ such that
every descending sequence in $\X$ computes $Z'$.
\end{theorem}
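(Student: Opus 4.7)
The plan is to observe that the construction carried out just above for computable $Z$ in fact goes through verbatim for arbitrary $Z\in\Bai$. Define $T_Z=\set{Z\upto n}{n\in\N}$, which is $Z$-computable, and set $\X=\X_Z=\la \JT(T_Z),\KB\ra$. By Lemma \ref{lemma:JTcomp}, $\JT(T_Z)$ is $T_Z$-computable, hence $Z$-computable, so $\X$ is a $Z$-computable linear ordering.

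For the $Z$-computable descending sequence in $\omop^\X$, I would just reuse the sequence $\la h(Z\upto n)\ra_{n\in\N}$. Since the function $h$ from the Definition preceding Lemma \ref{h:monotone} is computable and the strings $Z\upto n$ are $Z$-uniformly computable, the sequence is $Z$-computable, and Lemma \ref{h:monotone} (applied to $T_Z$) shows it is strictly $<_{\omop^\X}$-decreasing because $Z\upto 0\supset Z\upto 1\supset\dots$ fails, but the right direction is $\supset$-monotonicity and our strings go the other way — so I should be careful here and apply $h$ along the inclusion-increasing chain $Z\upto 0\subset Z\upto 1\subset\dots$, which by Lemma \ref{h:monotone} gives a strictly $>_{\omop^\X}$-decreasing sequence (i.e.\ descending), as desired.

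For the lower bound, note that $Z$ is the unique path through $T_Z$, so by Lemma \ref{lemma:JT paths} $\J(Z)$ is the unique path through $\JT(T_Z)$. The lexicographic argument given above (based only on the fact that any $\tau=J(\si)\in\JT(T_Z)$ with $\tau\not\subset\J(Z)$ satisfies $\tau\KBst \J(Z)\upto|\tau|$, which uses nothing about $Z$ other than that $\si\subset Z$) shows that any infinite \KB-descending sequence in $\JT(T_Z)$ either consists eventually of initial segments of $\J(Z)$ — in which case it already computes $\J(Z)\teq Z'$ — or else would contradict Lemma \ref{lemma:KB} by producing a second path through $\JT(T_Z)$.

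There is no genuine obstacle: the entire machinery of $J$, $K$, $\JT$, and $h$ was built as computable operators on $\Seq$ and on trees, with no dependence on the complexity of the tree's path, so the previous proof is in fact already a relativized proof. The only thing to double-check is that the argument for "every descending sequence computes $Z'$" does not implicitly assume $Z$ is computable; inspecting each step — Lemmas \ref{lemma:J K}, \ref{lemma:J approx J}, \ref{lemma:JTcomp}, \ref{lemma:JT paths}, \ref{h:monotone}, and the \KB-comparison argument — confirms that all of them relativize uniformly to any oracle $Z$.
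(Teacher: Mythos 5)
Your proposal is correct and follows exactly the paper's route: the paper itself proves this theorem merely by observing that the construction of $\X_Z = \la \JT(T_Z), \KB\ra$, the descending sequence $\la h(Z\upto n)\ra_{n\in\N}$, and the \KB-comparison argument were already carried out for an arbitrary real $Z$ and relativize uniformly. Your mid-paragraph self-correction about the direction of $({\supset},{<_{\omop^{\JT(T)}}})$-monotonicity lands in the right place, so there is nothing to fix.
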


\section{The $\eps$ function and the $\om$-Jump}\label{sect:eps}
In this section we extend the construction of Section \ref{sect:exp}.
To iterate the construction, even only a finite number of times, requires
generalizing the definition of $h$. Then we tackle the issue of
extending the definition at limit ordinals by considering the
$\om$-Jump.

\subsection{Finite iterations of exponentiation and Turing Jump}\label{ssect:finite}
We start by defining a version of the function $h$ used in the previous
section that we can iterate.

\begin{definition}\label{def:hgJ}
Let \X\ be a linear ordering, $T$ a tree and
\[
g\colon \JT(T) \to \X
\]
 a
function. Define
\[
h_g \colon T \to \omop^\X
\]
 by
\[
h_g(\si) = \left( \sum_{\substack{i<|J(\si)|\\ \{i\}^\si(i)\diverges}}
\om^{g(J(\si) \upto i)} \right) + \om^{g(J(\si))} \cdot 2
\]
for $\si\neq \es$ and $h_g(\es) = \om^{g(\es)}\cdot 3$.
\end{definition}

Note that when $g$ is the identity, then $h_g = h$ of the previous
section. Also, $h_g$ is $g$-computable.

\begin{lemma}\label{lemma:monotone}
If $g$ is $({\supset}, {<_\X})$-monotone, then $h_g$ is $({\supset},
{<_{\omop^\X}})$-monotone.
\end{lemma}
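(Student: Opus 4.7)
The plan is to imitate the proof of Lemma~\ref{h:monotone}, which is precisely this statement in the special case $\X = \JT(T)$, $g = \mathrm{id}$ (noting that $\tau \supsetneq \tau'$ in $\JT(T)$ implies $\tau \KBst \tau'$). Wherever the original proof appealed to $\es$ being $\KB$-maximal, or to a longer $\supset$-extension being smaller in $\KB$, I would instead invoke $(\supset,<_\X)$-monotonicity of $g$: if $\tau \supsetneq \tau'$ in $\JT(T)$, then $g(\tau) <_\X g(\tau')$.

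Fix $\rho \supsetneq \si$ in $T$; the goal is $h_g(\rho) <_{\omop^\X} h_g(\si)$. The case $\si = \es$ is immediate: $J(\es) = \es$ by (\ref{P1}), so $h_g(\es) = \om^{g(\es)} \cdot 3$, while for nonempty $\rho$ the exponent $g(\es)$ can occur in $h_g(\rho)$ only at $i = 0$ and with coefficient at most $1$, and every other exponent is of the form $g(\tau)$ for some nonempty $\tau \in \JT(T)$, hence $<_\X g(\es)$ by monotonicity.

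For $\si \neq \es$, we have $|\rho| \geq 2$, so (\ref{P4}) gives $J(\rho) \neq J(\si)$, and since $\si \subset \rho$, divergence of $\{i\}^\rho(i)$ implies divergence of $\{i\}^\si(i)$. Following the original proof, I would split into two subcases. In subcase~(a), the divergence statuses of $\{i\}^\si(i)$ and $\{i\}^\rho(i)$ coincide for every $i < |J(\si)|$; unwinding Definition~\ref{def: J si}, the true stages match for those $i$, forcing $J(\si) \subsetneq J(\rho)$. In subcase~(b), let $i_0 < |J(\si)|$ be least with $\{i_0\}^\si(i_0) \diverges$ and $\{i_0\}^\rho(i_0) \converges$; then $J(\si) \upto i_0 = J(\rho) \upto i_0$ while $J(\rho) \upto (i_0+1) \supsetneq J(\si) \upto i_0$.

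In both subcases the normal-form expansions of $h_g(\si)$ and $h_g(\rho)$ agree on an initial block of $\om^{g(\cdot)}$-terms (those indexed by $i < |J(\si)|$ with $\{i\}^\si(i) \diverges$ in subcase~(a), and by $i < i_0$ in subcase~(b)). The first lexicographic difference then occurs at $\om^{g(J(\si))}$ in subcase~(a), where the coefficient is $2$ in $h_g(\si)$ and $0$ or $1$ in $h_g(\rho)$, or at $\om^{g(J(\si) \upto i_0)}$ in subcase~(b), where this term appears in $h_g(\si)$ but not in $h_g(\rho)$. By monotonicity of $g$, all remaining terms of $h_g(\rho)$ have exponent strictly below this boundary exponent in $<_\X$, so they cannot compensate and $h_g(\rho) <_{\omop^\X} h_g(\si)$. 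The only real hurdle is keeping the bookkeeping of exponents straight across the two subcases; no new idea is required, since $(\supset,<_\X)$-monotonicity of $g$ converts every $\supset$-comparison used in the proof of Lemma~\ref{h:monotone} into exactly the $<_\X$-comparison needed here.
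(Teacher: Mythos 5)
Your proposal is correct and is essentially the paper's own proof: the paper simply observes that $({\supset},{<_\X})$-monotonicity of $g$ makes the sum in the definition of $h_g(\si)$ decreasing and then declares the argument identical to that of Lemma~\ref{h:monotone}, which is exactly the substitution you carry out. (One negligible slip: when $|\rho|=1$ the exponent $g(\es)$ appears in the trailing term $\om^{g(J(\rho))}\cdot 2$ rather than ``at $i=0$ with coefficient at most $1$'', but then $h_g(\rho)=\om^{g(\es)}\cdot 2<\om^{g(\es)}\cdot 3$ anyway.)
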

\begin{proof}
Notice that $g$ $({\supset}, {<_\X})$-monotone implies that the sum in
the definition of $h_g(\si)$ is written in decreasing order. The proof
is the same as the one for Lemma \ref{h:monotone}.
\end{proof}

We can now prove the analogue of Theorem \ref{Hirst} for iterations of
the exponential (recall the notation $\iexpop n\X$ introduced in Definition
\ref{iexpop}).

\begin{theorem}\label{0^n}
For every $n \in \N$ and $Z \in \Bai$, there is a $Z$-computable linear
ordering $\X_Z^n$ such that the jump of every descending sequence in
$\X_Z^n$ computes $Z^{(n)}$, but there is a $Z$-computable descending
sequence in $\iexpop n{\X_Z^n}$.
\end{theorem}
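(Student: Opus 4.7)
The plan is to iterate the construction of Theorem \ref{Hirst:gen} $n$ times. Set $T_Z^0 = T_Z = \set{Z \upto k}{k \in \N}$ and inductively $T_Z^{j+1} = \JT(T_Z^j)$, and define $\X_Z^n = \la T_Z^n, {\KB} \ra$. By Lemma \ref{lemma:JTcomp} and induction on $j$, each $T_Z^j$ is a $Z$-computable tree, so $\X_Z^n$ is a $Z$-computable linear ordering. Iterating Lemma \ref{lemma:JT paths} gives $[T_Z^n] = \{\J^n(Z)\}$; since $\J(Y) \teq Y'$ uniformly in $Y$, we have $\J^n(Z) \teq Z^{(n)}$.

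For the lower bound, given any \KB-descending sequence $f\colon \N \to T_Z^n$, Lemma \ref{lemma:KB} supplies a path $Y \in [T_Z^n]$ with $Y \leqt f'$. Uniqueness of the path forces $Y = \J^n(Z)$, whence $Z^{(n)} \teq \J^n(Z) \leqt f'$, as required.

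For the $Z$-computable descending sequence in $\iexpop n{\X_Z^n}$, the idea is to iterate $h_g$ (Definition \ref{def:hgJ}) exactly $n$ times, starting from the identity. Set $g_0\colon T_Z^n \to \X_Z^n$ to be the identity; this is $({\supset},{\KBst})$-monotone because $\rho \supsetneq \tau$ implies $\rho \KBst \tau$. For $k = 0, 1, \dots, n-1$ put $g_{k+1} = h_{g_k}$. Since $T_Z^{n-k} = \JT(T_Z^{n-k-1})$, the types line up: from $g_k\colon T_Z^{n-k} \to \iexpop k{\X_Z^n}$ one obtains $h_{g_k}\colon T_Z^{n-k-1} \to \omop^{\iexpop k{\X_Z^n}} = \iexpop {k+1}{\X_Z^n}$. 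By Lemma \ref{lemma:monotone}, monotonicity is preserved at each step, and $Z$-computability is preserved because $h_g$ is always $g$-computable. At the end we have a $Z$-computable $({\supset}, {<_{\iexpop n{\X_Z^n}}})$-monotone function $g_n\colon T_Z \to \iexpop n{\X_Z^n}$, and then $(g_n(Z \upto m))_{m \in \N}$ is $Z$-computable and strictly descending in $\iexpop n{\X_Z^n}$.

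The only real obstacle is bookkeeping: one has to keep the two towers $T_Z^{0} \mapsto T_Z^{1} \mapsto \dots \mapsto T_Z^{n}$ (built by applying $\JT$ upward) and $\X_Z^n \mapsto \omop^{\X_Z^n} \mapsto \dots \mapsto \iexpop n{\X_Z^n}$ (built by applying $\omop^{(\cdot)}$ upward) in lockstep, so that the index $k$ in $g_k$ matches the exponent in $\iexpop k{\X_Z^n}$. Once that alignment is fixed, the computability-theoretic content is entirely packaged into Lemmas \ref{lemma:JT paths}, \ref{lemma:KB}, and \ref{lemma:monotone}.
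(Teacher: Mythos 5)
Your proposal is correct and follows essentially the same route as the paper's proof: the same tower of trees $T_Z^{j+1}=\JT(T_Z^j)$ with $\X_Z^n=\la T_Z^n,\KB\ra$, the same appeal to Lemmas \ref{lemma:JTcomp}, \ref{lemma:JT paths}, and \ref{lemma:KB} for the lower bound, and the same recursion $g_0=\mathrm{id}$, $g_{k+1}=h_{g_k}$ with Lemma \ref{lemma:monotone} for the descending sequence. The bookkeeping alignment you describe is exactly how the paper organizes it.
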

\begin{proof}
Letting again $T_Z = \set{Z \upto n}{n \in \N}$, we define a sequence
$\la T_i \ra_{i \leq n}$ of trees as follows: let $T_0 = T_Z$ and
$T_{i+1} = \JT(T_i)$ for every $i<n$. By induction on $i$, using Lemmas
\ref{lemma:JTcomp} and \ref{lemma:JT paths}, we have that each $T_i$ is
a $Z$-computable tree and that the only path through $T_i$ is $\J^i
(Z)$ (i.e.\ the result of applying $i$ times $\J$ starting with $Z$).
We let $\X_Z^n = \la T_n, {\KB} \ra$, which is a $Z$-computable linear
ordering. By Lemma \ref{lemma:KB} if $f$ is a descending sequence in
$\X_Z^n$ then $\J^n (Z) \leqt f'$. Since $Z^{(n)} \teq \J^n (Z)$, the
first property of $\X_Z^n$ is proved.

To show that there is a $Z$-computable descending sequence in $\iexpop
n{\X_Z^n}$ we define by recursion on $m \leq n$ functions $g_m\colon
T_{n-m} \to \iexpop m{\X_Z^n}$. Let $g_0\colon T_n \to \X_Z^n$ be the
identity function ($T_n$ is indeed the domain of $\X_Z^n$). We define
$g_{m+1} \colon T_{n-m-1} \to \iexpop {m+1}{\X_Z^n}$ by $g_{m+1} =
h_{g_m}$ as in Definition \ref{def:hgJ}. By induction on $m \leq n$,
using Lemma \ref{lemma:monotone}, it is immediate that each $g_m$ is
$({\supset}, {<_{\iexpop m{T_n}}})$-monotone and computable. Hence the
sequence $\la g_n (Z \upto j) \ra_{j \in \N}$ in $\iexpop n{\X_Z^n}$ is
$Z$-computable and descending.
\end{proof}

\subsection{The $\om$-Jump}\label{ssect:eps}

Now we define the iteration of the Jump operator at the first limit
ordinal $\om$. Again, our definition is slightly different than the usual one
so that it has nicer combinatorial properties.
The difference being that instead of pasting all the $\J^i(Z)$ together as columns, we will take only the first value of each.
Later we will show that this is enough.
We will also define two
computable approximations to this $\om$-jump operator, one from strings
to strings, and the other one from trees to trees, and a computable
inverse function.

\begin{definition}
We define the \emph{$\om$-Jump operator} to be the function
$\J^\om\colon \Bai \to \Bai$ such that for every $Z \in \Bai$
\[
\J^\om(Z) = \la \J(Z)(0),\  \J^2(Z)(0),\  \J^3(Z)(0),\dots\ra,
\]
or, in other words, $\J^\om(Z)(n)=\J^{n+1}(Z)(0)$.
\end{definition}

Notice that $\J^\om(\J(Z))$ equals $\J^\om(Z)$ with the first element
removed. Before showing that $\J^\om(Z) \teq Z^{(\om)}$ it is
convenient to define the inverse of $\J^\om$.

\begin{definition}
Given $Y\in \J^\om(\Bai)$ we define
\[
\K^{\om}(Y) = \bigcup_{n} K^{n}(Y(n)).
\]
\end{definition}

We need to show that the union above makes sense. Assume $Y=\J^\om(Z)$.
It might help to look at Figure \ref{fig: Y=Jom Z}. Notice that for
each $n$, $Y(n) \subset \J^n(Z)$ because for every $X$, $\J(X)(0)
\subset X$ and $Y(n)=\J(\J^n(Z))(0)$. We also know that if $\si \subset
\J(X)$, then $K(\si)\subset X$, so that $K^{n}(Y(n))\subset Z$. It
follows that $\bigcup_{n} K^{n}(Y(n))\subseteq Z$. Applying (\ref{P5})
$n$ times we get that $|K^{n}(Y(n))| > n$, and therefore the union
above does actually produce $Z$. We have just proved the following
lemma.

\begin{figure}
{\Small
\xymatrix@C=12pt@R=12pt{
\J^\om(Z)  \ar@{}|(.6){=}[r]  \ar@{}|(.4){\shortparallel}[d] \ar@{}|(.8){\frown}[d] & Y\\
\vdots     &   \vdots \\
\J^4(Z)(0)   \ar@{}|(.6){=}[r]    & Y(3) \ar@{}|{\subset}[r] & \cdots    & &   & &\vdots  \\
\J^3(Z)(0)   \ar@{}|(.6){=}[r]    & Y(2)  \ar@{}|(.4){\subset}[r] &  K(Y(3)) \ar_{J}[ul]  \ar@{}|{\subset}[r] & \cdots& \cdots& \cdots \ar@{}|(.3){\subset}[r]     & \J^2(Z)	\\
\J^2(Z)(0)   \ar@{}|(.6){=}[r]    & Y(1)  \ar@{}|(.4){\subset}[r] &  K(Y(2)) \ar_{J}[ul]  \ar@{}|{\subset}[r]	&  K^2(Y(3)) \ar_{J}[ul] \ar@{}|{\subset}[r] & \cdots& \cdots \ar@{}|(.3){\subset}[r]     & \J(Z) \ar_{\J}[u]\\
\J(Z)(0)   \ar@{}|(.6){=}[r]    & Y(0)  \ar@{}|(.4){\subset}[r] &  K(Y(1))  \ar_{J}[ul]  \ar@{}|{\subset}[r]	&  K^2(Y(2))  \ar_{J}[ul]\ar@{}|{\subset}[r]&    K^3(Y(3))   \ar_{J}[ul] \ar@{}|(.7){\subset}[r] & \cdots\ar@{}|(.3){\subset}[r]   & Z \ar_{\J}[u]   \\
\ar@{}|{\smile}[u] &   K^\om(Y\upto 1) \ar@{}|{\subset}[r]\ar@{}|{\shortparallel}[u]   &   K^\om(Y\upto 2) \ar@{}|{\subset}[r]\ar@{}|{\shortparallel}[u]   &   K^\om(Y\upto 3) \ar@{}|{\subset}[r]\ar@{}|{\shortparallel}[u]   &   K^\om(Y\upto 4) \ar@{}|(.7){\subset}[r]\ar@{}|{\shortparallel}[u]  & \cdots\ar@{}|(.3){\subset}[r]    &\K^\om(Y) \ar@{}|{\shortparallel}[u] \\
}}
\caption{Assuming $Y=\J^\om(Z)$.}   \label{fig: Y=Jom Z}
\end{figure}

\begin{lemma}\label{lemma:Kom Bai}
For every $Z\in \Bai$, $\K^\om(\J^\om(Z))=Z$.
\end{lemma}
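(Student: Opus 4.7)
The plan is to formalize the argument already sketched in the paragraph just before the statement, using only the tools listed as (\ref{P1})--(\ref{P6}) together with Lemma \ref{lemma:J approx J}. First I would set $Y=\J^\om(Z)$ and unfold the definition to get $Y(n)=\J^{n+1}(Z)(0)$ for every $n$. The basic observation to isolate is: for any $X\in\Bai$, the string $\J(X)(0)$ is a proper initial segment of $X$. This is immediate from Definition \ref{def:J}, since $\J(X)(0)=X\upto t_0$ with $t_0\geq 2$. Applying this with $X=\J^n(Z)$ yields $Y(n)\subset\J^n(Z)$ for every $n$.

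Next I would prove by induction on $k\leq n$ that $K^k(Y(n))\subset\J^{n-k}(Z)$. The base case $k=0$ is the observation above. For the inductive step the key sublemma is: if $\tau\subset\J(X)$ (i.e.\ $\tau$ is a finite initial segment of $\J(X)$), then $\tau\in J(\Seq)$ and $K(\tau)\subset X$. To see this, apply Lemma \ref{lemma:J approx J} to produce a finite $\si\subset X$ with $\tau=J(\si)$; then (\ref{P3}) (or (\ref{P2})) gives $K(\tau)=\si\subset X$, and (\ref{P6}) guarantees $\tau\in J(\Seq)$ so that $K(\tau)$ is well-defined. Feeding this into the induction with $X=\J^{n-k-1}(Z)$ produces $K^{k+1}(Y(n))\subset\J^{n-k-1}(Z)$. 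Specializing at $k=n$ gives $K^n(Y(n))\subset Z$ for every $n$.

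Finally, since all the strings $K^n(Y(n))$ are initial segments of the same real $Z$, they are pairwise compatible, so $\bigcup_n K^n(Y(n))$ is a well-defined initial segment of $Z$. Iterating (\ref{P5}) starting from $Y(n)\neq\es$ shows $|K^n(Y(n))|>n$, so the union has infinite length and must equal $Z$, which is exactly the claim $\K^\om(\J^\om(Z))=Z$.

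I do not expect a serious obstacle: the entire picture is captured by Figure \ref{fig: Y=Jom Z}, and the only real work is the inductive sublemma, which is a one-line appeal to Lemma \ref{lemma:J approx J} together with (\ref{P2}), (\ref{P3}) and (\ref{P6}). The mild subtlety is checking that $K^k(Y(n))$ is always a nonempty element of $J(\Seq)$ so that the next application of $K$ is legitimate, but this is ensured by (\ref{P5}) and (\ref{P6}) applied along the induction.
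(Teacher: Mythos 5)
Your proposal is correct and follows the same route as the paper, which proves this lemma via exactly the chain $Y(n)=\J(\J^n(Z))(0)\subset\J^n(Z)$, then $K^n(Y(n))\subset Z$ using the fact that initial segments of $\J(X)$ pull back under $K$ to initial segments of $X$, and finally $|K^n(Y(n))|>n$ by iterating (\ref{P5}). Your explicit induction and the justification of the pull-back step via Lemma \ref{lemma:J approx J} together with (\ref{P2}), (\ref{P3}) and (\ref{P6}) just make precise what the paper leaves as a remark before the lemma statement.
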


\begin{lemma} \label{lemma: Z om jump}
For every $Z\in \Bai$, $\J^\om(Z) \teq Z^{(\om)}$.
\end{lemma}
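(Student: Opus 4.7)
The plan is to prove $\J^\om(Z) \leqt Z^{(\om)}$ and $Z^{(\om)} \leqt \J^\om(Z)$ separately. For the upper bound, I would first establish that $\J^m(Z) \teq Z^{(m)}$ uniformly in $m$, by induction on $m$ using that $\J(X) \teq X'$ uniformly (or via Corollary \ref{cor:Jm approx JM}). Since $Z^{(\om)}$ uniformly computes every $Z^{(m+1)}$, it then uniformly computes every $\J^{m+1}(Z)$ and in particular reads off $\J^\om(Z)(m) = \J^{m+1}(Z)(0)$.

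For the lower bound, since $Z^{(\om)}$ is Turing equivalent to the join $\bigoplus_n \J^n(Z)$, it suffices to show that each $\J^n(Z)$ is computable from $Y = \J^\om(Z)$, uniformly in $n$. The case $n=0$ is exactly Lemma \ref{lemma:Kom Bai}. For $n \geq 1$, the key observation (encoded in Figure \ref{fig: Y=Jom Z}) is that $Y(n) = \J(\J^n(Z))(0)$ is an initial segment of $\J^n(Z) = \J(\J^{n-1}(Z))$, and by Lemma \ref{lemma:J approx J} this initial segment lies in $J(\Seq)$, so $K(Y(n)) \subset \J^{n-1}(Z)$. Iterating, $K^j(Y(n+j))$ is an initial segment of $\J^n(Z)$, and property (\ref{P5}) gives $|K^j(Y(n+j))| \geq |Y(n+j)| + j$, so these initial segments have lengths going to infinity. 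Hence
\[
\J^n(Z) = \bigcup_{j \in \N} K^j(Y(n+j)),
\]
computable from $Y$ uniformly in $n$.

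The main technical point to verify is the inductive soundness of the $K$-iteration: for each $n+j \geq 1$ the string $Y(n+j)$ must lie in $J(\Seq)$ so that $K$ is defined on it, and each intermediate iterate $K, K^2, \dots, K^{j-1}$ applied to $Y(n+j)$ must stay in $J(\Seq)$. Both are ensured by Lemma \ref{lemma:J approx J} (initial segments of a sequence in the image of $\J$ land in $J(\Seq)$) together with property (\ref{P6}) (prefixes of elements of $J(\Seq)$ remain in $J(\Seq)$, with $K$ mapping prefixes to prefixes). The length-growth in property (\ref{P5}) then delivers the unboundedness of $|K^j(Y(n+j))|$ that is needed for the union to recover all of $\J^n(Z)$.
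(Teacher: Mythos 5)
Your proposal is correct and follows essentially the same route as the paper: the upper bound via $\J^m(Z)\teq Z^{(m)}$ uniformly in $m$, and the lower bound by recovering each $\J^n(Z)$ from the tail of $Y=\J^\om(Z)$ by iterating $K$. Your union $\bigcup_j K^j(Y(n+j))$ is exactly $\K^\om(Y^{-n})$, so you are just unwinding the paper's one-line appeal to the shift identity $\J^\om(\J^n(Z))=\J^\om(Z)^{-n}$ together with Lemma \ref{lemma:Kom Bai}, and the soundness checks you flag ((\ref{P5}), (\ref{P6}), Lemma \ref{lemma:J approx J}) are the same ones the paper uses to justify that lemma.
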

\begin{proof}
We already know that $Z^{(n)}\teq \J^n(Z)$ uniformly in $n$ and hence
that $Z^{(\om)} = \bigoplus_{n\in\N}Z^{(n)}  \teq \bigoplus_{n\in\N}
\J^n(Z)$. It immediately  follows that  $\J^\om(Z) \leqt Z^{(\om)}$.
For the other direction we need to uniformly compute all the reals
$\J^n(Z)$ from $\J^\om(Z)$. We do this as follows. Given $Y\in\Bai$,
let $Y^{-n}$ be $Y$ with its first $n$ elements removed. Then,
$\J^\om(\J^n(Z))=\J^\om(Z)^{-n}$. By the lemma above we get that
$\J^n(Z)=\K^\om(\J^\om(Z)^{-n})$, which we can compute uniformly from
$\J^\om(Z)$.
\end{proof}

As in section \ref{sect:exp}, where we computably approximated the jump
operator, we will now approximate the $\om$-Jump operator with a
computable operation on finite strings.

\begin{definition}\label{def:Jom si}
The \emph{$\om$-Jump function} is the map $J^\om \colon \Seq \to \Seq$
defined as follows. Given $\si\in \Seq$, let
\[
J^\om(\si) = \la J(\si)(0),\  J^2(\si)(0),\dots,  J^{n-1}(\si)(0)\ra,
\]
where $n$ is the least such that $J^n(\si)=\es$ (there is always such
an $n$, because, by (\ref{P5}), $|J^i(\si)|\leq|\si|-i$ for $i\leq |\si|$). Note that then, by (\ref{P1}),
$|J^{n-1}(\si)|=1$.
\end{definition}

$J^\om$ is computable (because $J$ is computable) and we will now
define its computable partial inverse $K^\om$.

\begin{definition}\label{def:Kom si}
Given $\tau\in J^\om(\Seq)$, let $K^\om(\tau)= K^{|\tau|}(\ell(\tau))$
(recall that $\ell(\tau)=\la \tau(|\tau|-1|)\ra$) when $\tau\neq\es$, and
$K^\om(\es)=\es$.
\end{definition}

The following properties are the analogues of those of Lemma
\ref{lemma:J K} for the $\om$-Jump function and its inverse. We will
refer to them as (\ref{Pom1}), \dots, (\ref{Pom7}).

\begin{lemma}  \label{lemma:Jom Kom}
For $\si,\si', \tau'\in \Seq$, $\tau\in J^\om(\Seq)$,
\begin{enumerate}\renewcommand{\theenumi}{P$^\om$\arabic{enumi}}
\item $J^\om(\si)=\es$ if and only if $|\si|\leq 1$. \label{Pom1}
\item $K^\om(J^\om(\si))= \si$ for $|\si|\geq 2$. \label{Pom2}
\item $J^\om(K^\om(\tau))=\tau$.        \label{Pom3}
\item If $\si\neq\si'$ and at least one has length $\geq 2$, then
    $J^\om(\si)\neq J^\om(\si')$.   \label{Pom4}
\item $|J^\om(\si)|<|\si|$ and $|K^\om(\tau)|>|\tau|$ except when
    $\tau=\es$.    \label{Pom5}
\item If $\tau'\subset\tau$ then $\tau'\in J^\om(\Seq)$ and
    $K^\om(\tau')\subset K^\om(\tau)$.      \label{Pom6}
\item If $J^\om(\si')\subseteq J^\om(\si)$ then, for every $m$, $J^m(\si')\subseteq J^m(\si)$.   \label{Pom7}
\end{enumerate}
\end{lemma}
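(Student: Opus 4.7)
The plan is to derive each property (\ref{Pom1})--(\ref{Pom7}) from its finite counterpart in Lemma~\ref{lemma:J K}, using the two defining recipes
\[
J^\om(\si)=\la J^i(\si)(0):1\leq i<n\ra,\qquad K^\om(\tau)=K^{|\tau|}(\ell(\tau)),
\]
where $n$ is least with $J^n(\si)=\es$. The central observation that drives essentially everything is that whenever $|\si|\geq 2$, the minimality of $n$ forces $|J^{n-1}(\si)|=1$, so $\ell(J^\om(\si))$ actually equals $J^{n-1}(\si)$, and iterating (\ref{P2}) $n-1$ times recovers $\si$ from $\ell(J^\om(\si))$.

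Properties (\ref{Pom1})--(\ref{Pom5}) fall out quickly from this setup: (\ref{Pom1}) is direct from (\ref{P1}); (\ref{Pom2}) comes from the central observation via $n-1$ applications of (\ref{P2}), using that $|J^j(\si)|\geq 2$ for $j<n-1$ by minimality of $n$; (\ref{Pom3}) follows by applying (\ref{Pom2}) to $\si=K^\om(\tau)$; (\ref{Pom4}) is the contrapositive of (\ref{Pom2}); and (\ref{Pom5}) follows by iterating (\ref{P5}) in each direction, using $|J^i(\si)|\leq|\si|-i$ for the bound on $|J^\om(\si)|$ and starting from the length-one string $\ell(\tau)$ for the bound on $|K^\om(\tau)|$.

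The main work is for (\ref{Pom6}). Given $\tau'\subsetneq\tau=J^\om(\si)$ with $\tau'\neq\es$, I take $\mu=K^{|\tau'|}(\ell(\tau'))$ as the candidate for $K^\om(\tau')$. Since $\ell(\tau')=\la J^{|\tau'|}(\si)(0)\ra$ is a length-one prefix of $J^{|\tau'|}(\si)$ (which has length $\geq 2$ because $|\tau'|\leq|\tau|-1=n-2$), iterating (\ref{P6}) gives $\mu\subsetneq K^{|\tau'|}(J^{|\tau'|}(\si))=\si$, while iterating (\ref{P3}) gives $J^i(\mu)=K^{|\tau'|-i}(\ell(\tau'))\subsetneq J^i(\si)$ for $0\leq i\leq|\tau'|$. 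In particular $J^{|\tau'|}(\mu)=\ell(\tau')$ has length $1$, so $J^{|\tau'|+1}(\mu)=\es$ and $|J^\om(\mu)|=|\tau'|$; matching first entries between $J^i(\mu)$ and $J^i(\si)$ yields $J^\om(\mu)=\tau'$. Thus $\tau'\in J^\om(\Seq)$ and $K^\om(\tau')=\mu\subsetneq\si=K^\om(\tau)$; the case $\tau'=\es$ is immediate from (\ref{Pom5}).

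Finally, (\ref{Pom7}) rests on an auxiliary fact not explicit in Lemma~\ref{lemma:J K}: $J$ itself is prefix-monotone, $\si'\subseteq\si\implies J(\si')\subseteq J(\si)$. This follows directly from Definition~\ref{def: J si}, since the true-stage sequence on $\si'$ agrees with that on $\si$ until stages would exceed $|\si'|$, at which point the list defining $J(\si')$ simply truncates. Iteration then yields $J^m$ prefix-monotone, and (\ref{Pom2}) together with (\ref{Pom6}) gives $\si'=K^\om(J^\om(\si'))\subseteq K^\om(J^\om(\si))=\si$ in the non-trivial case $|\si'|\geq 2$, after which prefix-monotonicity closes the argument for every $m$. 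Throughout the lemma, the genuinely new step is (\ref{Pom6}); the rest is careful bookkeeping with (\ref{P1})--(\ref{P6}).
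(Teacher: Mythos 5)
Your treatment of (\ref{Pom1})--(\ref{Pom6}) is correct and essentially identical to the paper's: the same central observation that $\ell(J^\om(\si))=J^{n-1}(\si)$ has length one, and for (\ref{Pom6}) the same candidate $K^{|\tau'|}(\ell(\tau'))$ with the same chain $J^i(\mu)=K^{|\tau'|-i}(\ell(\tau'))\subset J^i(\si)$ obtained by iterating (\ref{P6}), (\ref{P2}) and (\ref{P3}).

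The proof of (\ref{Pom7}), however, has a genuine gap: the auxiliary claim that $J$ is prefix-monotone ($\si'\subseteq\si\implies J(\si')\subseteq J(\si)$) is false. The true-stage sequence for $\si'$ does not ``agree with that for $\si$ and then truncate.'' If the least $t$ with $\{n\}^{\si\upto t}(n)\converges$ exceeds $|\si'|$, then no such $t$ exists for $\si'$, and Definition \ref{def: J si} assigns the \emph{default} value $t'_n=t'_{n-1}+1$, which may well be $\leq|\si'|$; the list defining $J(\si')$ then continues with an entry $\si'\upto t'_n$ that is not an entry of $J(\si)$. Concretely, if $\{0\}^{\si}(0)$ first converges using $50$ oracle bits, $|\si|=100$ and $\si'=\si\upto 5$, then $J(\si)(0)=\si\upto t_0$ with $t_0>50$ while $J(\si')(0)=\si'\upto 2$, so $J(\si')\not\subseteq J(\si)$. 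Indeed this failure is not incidental but essential to the whole construction: if $J$ were prefix-monotone, $\JT(T_Z)$ would be a single branch and $\la\JT(T_Z),\KB\ra$ would have a computable descending sequence cofinal in a computable chain, contradicting Theorem \ref{Hirst}. Consequently your reduction of (\ref{Pom7}) to ``$\si'\subseteq\si$ plus prefix-monotonicity'' cannot work: the hypothesis $J^\om(\si')\subseteq J^\om(\si)$ is genuinely stronger than $\si'\subseteq\si$, and the conclusion fails if one only assumes the latter.

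The repair is short and you already have the needed ingredient: in your (\ref{Pom6}) argument you established $J^i(\mu)=K^{|\tau'|-i}(\ell(\tau'))\subseteq J^i(\si)$ for $0\leq i\leq|\tau'|$ where $\mu=K^{|\tau'|}(\ell(\tau'))$. For (\ref{Pom7}) set $\tau'=J^\om(\si')$; in the nontrivial case $|\si'|\geq 2$, (\ref{Pom2}) gives $\mu=K^\om(\tau')=\si'$, so that identity (this is equation (\ref{aa}) in the paper, read with $i=|\tau'|-m$) directly yields $J^m(\si')\subseteq J^m(\si)$ for $m\leq|\tau'|$, and $J^m(\si')=\es$ for $m>|\tau'|$. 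This is exactly how the paper concludes.
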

\begin{proof}
(\ref{Pom1}) follows from (\ref{P1}) and the fact that $J^\om(\si)=\es$
is equivalent to $J(\si)=\es$.

To prove (\ref{Pom2}) let $|J^\om(\si)|=n>0$. Then $\ell(J^\om(\si)) =
\la J^n(\si)(0) \ra = J^n(\si)$ because $|J^n(\si)|=1$ as noticed in
the definition of $J^\om$. Thus $K^\om(J^\om(\si)) = K^n(J^n(\si)) =
\si$ by (\ref{P2}).

(\ref{Pom3}) follows from (\ref{Pom2}) and $K(\es)=\es$. (\ref{Pom4})
follows immediately from (\ref{Pom1}) and (\ref{Pom2}). The first part
of (\ref{Pom5}) is immediate because by (\ref{P5}) we have
$J^{|\si|}(\si)=\es$. The second part of (\ref{Pom5}) is a consequence
of the first, (\ref{Pom1}) and (\ref{Pom2}).

For (\ref{Pom6}) look at Figure \ref {fig tau = Jom si}.
\begin{figure}
{\Small
\[
\xymatrix@=10pt{
\tau \ar@{}|{=}[r] \ar@{}|(.4){\shortparallel}[dd]& \J^{\om}(\si)  \ar@{}|(.4){\shortparallel}[dd] \\
\ar@{}|{\frown}[d]	&	\ar@{}|{\frown}[d] &\ar@{-}[dddddd]	&&   \emptyset \\
\tau(|\tau|-1)\ar@{}|{=}[r]	&J^{|\tau|}(\si)(0)&	&\la \tau(|\tau|-1)\ra\ar@{}|(.6){=}[r]&   J^{|\tau|}(\si)  \ar_J[u] \\
\vdots	& \vdots			&&&&	\ddots \ar_J[ul] \\
\tau(|\tau'|-1)\ar@{}|{=}[r]	& J^{|\tau'|}(\si)(0)			&& \la \tau(|\tau'|-1)\ra\ar@{}|(.6){=}[r]  &J^{|\tau'|}(\si') \ar@{}|(.8){\subset}[r]  \ar_K[dr]  & \cdots  \ar@{}|(.3){\subset}[r] &  J^{|\tau'|}(\si)  \ar_J[ul] \\
\vdots& \vdots	&&&&\ddots \ar_K[dr]&		&\ddots  \ar_J[ul]\\
\tau(0)\ar@{}|{=}[r]& J(\si)(0)	&&&&& J(\si') \ar_K[dr]  \ar@{}|(.6){\subset}[r]    & \cdots  \ar@{}|(.3){\subset}[r]    & J(\si) \ar_J[ul]\\
\ar@{}|{\smile}[u]	&	\ar@{}|{\smile}[u]&&&&&		 &\si'	\ar@{}|(.6){\subset}[r]    & \cdots  \ar@{}|(.3){\subset}[r]       &  \si \ar_J[ul]\\
}
\]}
\caption{Assuming $\tau=J^\om(\si)$ and $\tau'\subset\tau$.}\label{fig tau = Jom si}
\end{figure}
(\ref{Pom6}) is obvious when $\tau'=\es$, using the second part of
(\ref{Pom5}). Otherwise, let $\si$ be such that $\tau=J^\om(\si)$. The
idea is to define $\si'\subset \si$ as in the picture and then show
that $\tau'=J^\om(\si')$. Notice that $|\si|>|\tau|\geq2$ by
(\ref{Pom5}), and that $\si=K^\om(\tau)$ by (\ref{Pom2}). Notice also
that
\[
\ell(\tau') = \la \tau(|\tau'|-1)\ra = \la J^{|\tau'|}(\si)(0) \ra \subset J^{|\tau'|}(\si)
\]
because $|\tau'|<|\tau|$ and hence $|J^{|\tau'|}(\si)|>1$. Let $\si' =
K^{|\tau'|}(\ell(\tau'))$. Using (\ref{P6}) $|\tau'|$ times we know
that $\si' \subset \si$ and $J^{|\tau'|}(\si')=\ell(\tau')$. Now, we
need to show that $J^{\om}(\si')=\tau'$. First notice that
$|J^\om(\si')| = |\tau'|$ because $|J^{|\tau'|} (\si')| = |\ell(\tau')|
=1$. By induction on $i \leq |\tau'|$ we can show, using (\ref{P6}) and
(\ref{P2}), that
\begin{equation}\label{aa}
J^{|\tau'|-i}(\si') = K^i(\ell(\tau')) \subset K^i(J^{|\tau'|}(\si)) = J^{|\tau'|-i}(\si).
\end{equation}

Now for $j<|\tau'|$, $J^\om(\si')(j) = J^{j+1}(\si')(0) = J^{j+1}(\si)(0) = \tau(j) = \tau'(j)$.

For (\ref{Pom7}) let $\tau'=J^\om(\si')$. Then, if $i=|\tau'|-m$,
equation (\ref{aa}) shows that $J^m(\si') \subseteq J^m(\si)$.
\end{proof}

As we did in Section \ref{sect:exp}, we now explain how the $\om$-Jump
function approximates the $\om$-Jump operator.

\begin{lemma}\label{lemma:Jom approx Jom}
Given $Y,Z\in \Bai$, the following are equivalent:
\begin{enumerate}
\item $Y = \J^\om(Z)$;
\item for every $n$ there exists $\si_n\subset Z$ with $|\si_n|>n$    such that $Y\upto n =J^\om(\si_n)$.
\end{enumerate}\end{lemma}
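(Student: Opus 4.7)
The proof parallels that of Lemma \ref{lemma:J approx J}, now using $J^\om$ and $K^\om$ in place of $J$ and $K$. The algebraic properties (\ref{Pom1})--(\ref{Pom6}) of Lemma \ref{lemma:Jom Kom} play exactly the role that (\ref{P1})--(\ref{P6}) played there, while the new ingredient needed at each step is the fact --- delivered by Corollary \ref{cor:Jm approx JM} --- that for $\si \subset Z$ long enough, the finite-string jump $J^m(\si)(0)$ agrees with $\J^m(Z)(0)$, for each fixed $m$.

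For the forward direction, assume $Y = \J^\om(Z)$. For $n = 0$ let $\si_0 = Z \upto 1$; by (\ref{Pom1}), $J^\om(\si_0) = \es = Y \upto 0$. For $n \geq 1$ the natural candidate is $\si_n := K^\om(Y \upto n)$: thanks to (\ref{Pom3}), $J^\om(\si_n) = Y \upto n$ as soon as we know $Y \upto n \in J^\om(\Seq)$, and (\ref{Pom5}) automatically yields $|\si_n| > n$. So the task is to exhibit some $\tilde\si \subset Z$ with $J^\om(\tilde\si) \supseteq Y \upto n$, after which (\ref{Pom6}) delivers both $Y \upto n \in J^\om(\Seq)$ and $\si_n \subset \tilde\si \subset Z$. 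Unfolding the definition of $J^\om$, this amounts to checking $J^{j+1}(\tilde\si)(0) = \J^{j+1}(Z)(0) = Y(j)$ for each $j < n$, which follows by applying Corollary \ref{cor:Jm approx JM} with $m = j+1$ to obtain a prefix $\rho_j \subset Z$ correctly computing $\J^{j+1}(Z)(0)$ and then observing that any longer prefix of $Z$ continues to compute the same first value (because the first true stage at each level stabilizes once it is correctly witnessed). Taking $\tilde\si$ to be any prefix of $Z$ extending all $\rho_j$ for $j < n$ finishes this direction.

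For the backward direction, assume (2) and suppose for contradiction that $Y \neq \J^\om(Z)$; let $j$ be least with $Y(j) \neq \J^\om(Z)(j) = \J^{j+1}(Z)(0)$. The same stabilization statement used above, iterated $j+1$ times and again encoded by Corollary \ref{cor:Jm approx JM}, furnishes a threshold $L$ such that $J^{j+1}(\si)(0) = \J^{j+1}(Z)(0)$ for every prefix $\si \subset Z$ with $|\si| \geq L$. Pick $n > \max(j, L)$ and let $\si_n$ be the witness supplied by (2), so $|\si_n| > n \geq L$ and $J^\om(\si_n) = Y \upto n$. Then $J^\om(\si_n)(j) = Y(j)$, but on the other hand, by the definition of $J^\om$ and the threshold property, $J^\om(\si_n)(j) = J^{j+1}(\si_n)(0) = \J^{j+1}(Z)(0) \neq Y(j)$, a contradiction. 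The main technical point in both directions is exactly this stabilization of $J^m$ on long enough prefixes of $Z$; once it is in hand the remaining bookkeeping reduces cleanly to the identities of Lemma \ref{lemma:Jom Kom}.
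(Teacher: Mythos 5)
Your overall architecture---taking $\si_n = K^\om(Y\upto n)$ in the forward direction and locating the least point of disagreement in the backward direction---matches the paper's, but both directions lean on a stabilization principle that is not available: namely, that for each fixed $m$ there is a threshold $L$ such that $J^{m}(\si)(0) = \J^{m}(Z)(0)$ for \emph{every} prefix $\si\subset Z$ with $|\si|\geq L$. Corollary \ref{cor:Jm approx JM} does not furnish this: it only asserts the \emph{existence}, for each $n$, of some witness, and the witnesses produced in its proof are the very special prefixes obtained by iterating $K$, i.e.\ restrictions of $Z$ to iterated true stages. The claim is in fact false for $m\geq 2$ in general. For an arbitrary long $\si\subset Z$, the string $J(\si)$ agrees with $\J(Z)$ only on an initial segment and then continues with a ``wrong tail'' of underestimated true stages; $J^2(\si)(0)$ is computed with $J(\si)$ as oracle, and a machine whose computation relative to the true $\J(Z)$ diverges can converge spuriously by querying that tail (for instance by searching for two consecutive oracle entries whose lengths differ by exactly one---a pattern present in every finite approximation but, for suitable $Z$, never in $\J(Z)$ itself). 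When this happens, $J^2(\si)(0)\neq\J^2(Z)(0)$ for all sufficiently long $\si$, and no threshold exists. Stabilization does hold for $m=1$, because there the oracle is $\si$ itself, a genuine initial segment of $Z$ with no wrong tail; that is exactly why the analogous step inside the proof of Lemma \ref{lemma:J approx J} is sound and why it does not iterate.

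The repair is to avoid arbitrary long prefixes entirely. In the forward direction, $\si_n=K^\om(Y\upto n)=K^{n-1}(Y(n-1))$ is one of the strings occurring in the definition of $\K^\om(Y)$, so $\si_n\subset\K^\om(\J^\om(Z))=Z$ by Lemma \ref{lemma:Kom Bai}; no auxiliary $\tilde\si$ and no stabilization are needed, and $Y\upto n=J^\om(\si_n)$ comes from (\ref{Pom3}), with $|\si_n|>n$ from (\ref{P5}) applied $n$ times. In the backward direction, use the hypothesis (2) itself rather than a threshold: since $J^\om(\si_n)=Y\upto n\subseteq Y\upto(n+1)=J^\om(\si_{n+1})$, property (\ref{Pom7}) gives $J^{m}(\si_n)\subseteq J^{m}(\si_{n+1})$ for every $m$, so the unions $\bigcup_n J^{m}(\si_n)$ are well defined and, by Corollary \ref{cor:Jm approx JM} applied in the direction (2)$\Rightarrow$(1), equal $\J^{m}(Z)$; reading off first entries then yields $Y(m)=J^{m+1}(\si_n)(0)=\J^{m+1}(Z)(0)=\J^\om(Z)(m)$. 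You should replace the threshold argument by this nestedness argument.
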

\begin{proof}
First assume $Y = \J^\om(Z)$. When $n=0$ let $\si_0=Z\upto1$, which
works by (\ref{Pom1}). When $n>0$ let $\si_n = K^\om(Y \upto n)$. We
recommend the reader to look at Figure \ref{fig: Y=Jom Z} again. By
(\ref{Pom3}) we have $Y\upto n =J^\om(\si_n)$. Since $\si_n = K^n(\la
Y(n-1)\ra) = K^{n-1}(Y(n-1))$, $\si_n$ is one of the strings occurring
in the definition of $\K^\om(Y)$ and hence $\si_n \subset \K^\om(Y)=Z$
by Lemma \ref{lemma:Kom Bai}. We get that $|\si_n|>n$ by (\ref{P5})
applied $n$ times to $\si_n=K^n(\la Y(n-1)\ra)$.

Suppose now that (2) holds. By (\ref{Pom7}), we get that for all $m$
and $n$, $J^m(\si_n)\subseteq J^m(\si_{n+1})$. Using Corollary
\ref{cor:Jm approx JM}, it is straightforward to show that for all
$m<n$, $\bigcup_nJ^m(\si_n) = \J^m(Z)$ and hence
$J^m(\si_n)(0)=\J^m(Z)(0)$. It follows that for every $m$ and $n>m$
\[
\J^\om(Z)(m)=\J^{m+1}(Z)(0)=J^{m+1}(\si_{n})(0) = J^\om(\si_{n})(m)= Y(m).\qedhere
\]
\end{proof}

Again as in Section \ref{sect:exp}, the $\om$-Jump function leads to
the definition of the $\om$-Jump Tree.

\begin{definition}
Given a tree $T \subseteq \Seq$ the \emph{$\om$-Jump Tree of $T$}
is
\[
\JT^\om(T) = \set{J^\om(\si)}{\si\in T}.
\]
\end{definition}

\begin{lemma}\label{lemma:JTom comp}
For every tree $T$, $\JT^\om(T)$ is a tree computable in $T$.
\end{lemma}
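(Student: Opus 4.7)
The plan is to mimic closely the proof of Lemma \ref{lemma:JTcomp}, replacing every use of the properties (\ref{P1})--(\ref{P6}) of $J,K$ by the corresponding $\om$-versions (\ref{Pom1})--(\ref{Pom6}) of $J^\om,K^\om$ established in Lemma \ref{lemma:Jom Kom}.

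First I would verify that $\JT^\om(T)$ is downward closed under $\subset$. Suppose $\tau'\subset\tau$ where $\tau=J^\om(\si)$ for some $\si\in T$. By (\ref{Pom6}) we know $\tau'\in J^\om(\Seq)$ and $K^\om(\tau')\subset K^\om(\tau)$. Next, (\ref{Pom2}) (together with the trivial case $|\si|\leq 1$, where by (\ref{Pom1}) we have $\tau=\es$ and $\tau'=\es=J^\om(\es)$) gives $K^\om(\tau)\subseteq\si$, so $K^\om(\tau')\subset\si$. Since $T$ is a tree and $\si\in T$, this yields $K^\om(\tau')\in T$. Finally, (\ref{Pom3}) gives $\tau'=J^\om(K^\om(\tau'))$, so $\tau'\in\JT^\om(T)$.

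Second, I would show $T$-computability by giving a decision procedure for membership. I claim
\[
\tau\in\JT^\om(T)\iff \tau=J^\om(K^\om(\tau))\ \text{and}\ K^\om(\tau)\in T.
\]
The right-to-left direction is immediate from the definition of $\JT^\om(T)$. For the left-to-right direction, if $\tau=J^\om(\si)$ with $\si\in T$, then by (\ref{Pom2}) (and the trivial case handled above) $K^\om(\tau)=\si\in T$, and then $J^\om(K^\om(\tau))=J^\om(\si)=\tau$. Since both $J^\om$ and $K^\om$ are computable (being defined by finite iteration of the computable maps $J$ and $K$; see Definitions \ref{def:Jom si} and \ref{def:Kom si}), and membership in $T$ is $T$-computable, this gives a $T$-computable test.

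There is no real obstacle here; the only point to be careful about is the edge case $\tau=\es$ (and correspondingly $\si$ of length $\leq 1$), which is handled by the base clauses $K^\om(\es)=\es$ and $J^\om(\es)=\es$ together with (\ref{Pom1}). Everything else is a mechanical transcription of the proof of Lemma \ref{lemma:JTcomp}.
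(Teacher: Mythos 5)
Your proof is correct and is essentially the paper's own argument: the paper simply states that the proof of Lemma \ref{lemma:JTcomp} goes through verbatim with Lemma \ref{lemma:Jom Kom} replacing Lemma \ref{lemma:J K}, which is exactly the transcription you carry out (including the harmless edge case $\tau=\es$).
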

\begin{proof}
The proof is identical to the proof of Lemma \ref{lemma:JTcomp}, using
Lemma \ref{lemma:Jom Kom} in place of Lemma \ref{lemma:J K}.
\end{proof}

\begin{lemma}\label{lemma:JTom paths}
For every tree $T$, $[\JT^\om(T)] = \set{\J^\om(Z)}{Z \in [T]}$.
\end{lemma}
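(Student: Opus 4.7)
The plan is to mimic the proof of Lemma \ref{lemma:JT paths} for the ordinary Jump Tree, using the properties (\ref{Pom1})--(\ref{Pom7}) of $J^\om$ and $K^\om$ together with the approximation lemma (Lemma \ref{lemma:Jom approx Jom}) in place of Lemma \ref{lemma:J approx J}. The structure is a double inclusion, and the main conceptual point is that $K^\om$ plays the role of a partial inverse to $J^\om$ along initial segments, which will let us recover a path $Z\in[T]$ from a path $Y\in[\JT^\om(T)]$.

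For the inclusion $\set{\J^\om(Z)}{Z\in[T]}\subseteq[\JT^\om(T)]$, I would fix $Z\in[T]$ and show that every initial segment $\J^\om(Z)\upto n$ lies in $\JT^\om(T)$. By Lemma \ref{lemma:Jom approx Jom}, for each $n$ there is $\si_n\subset Z$ with $|\si_n|>n$ such that $\J^\om(Z)\upto n = J^\om(\si_n)$. Since $Z\in[T]$, $\si_n\in T$, and hence $\J^\om(Z)\upto n\in\JT^\om(T)$.

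For the reverse inclusion, I would fix $Y\in[\JT^\om(T)]$ and produce $Z\in[T]$ with $\J^\om(Z)=Y$. For $n\geq 1$, $Y\upto n\in\JT^\om(T)$ so there is $\si\in T$ with $J^\om(\si)=Y\upto n$; since $Y\upto n\neq\es$, (\ref{Pom1}) forces $|\si|\geq 2$, and then (\ref{Pom2}) makes $\si$ unique, equal to $\si_n:=K^\om(Y\upto n)$. Thus $\si_n\in T$ for every $n\geq 1$. Property (\ref{Pom6}) applied to the chain $Y\upto 1\subset Y\upto 2\subset\cdots$ yields $\si_n\subset\si_{n+1}$, and (\ref{Pom5}) gives $|\si_n|>n$, so $Z:=\bigcup_n\si_n\in\Bai$ is well-defined. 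Since $T$ is a tree and every initial segment of $Z$ is contained in some $\si_n\in T$, we have $Z\in[T]$. Finally, the existence of the $\si_n\subset Z$ with $|\si_n|>n$ and $Y\upto n=J^\om(\si_n)$ matches condition (2) of Lemma \ref{lemma:Jom approx Jom}, so $Y=\J^\om(Z)$.

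There is no real obstacle here; the only thing to be careful about is that $\si_n$ is uniquely determined (so we can coherently define $Z$), and this is exactly the content of (\ref{Pom2}) combined with the nonemptiness of $Y\upto n$ for $n\geq 1$. Everything else is a direct transcription of the proof of Lemma \ref{lemma:JT paths} with $J,K$ replaced by $J^\om,K^\om$ and Lemma \ref{lemma:J approx J} replaced by Lemma \ref{lemma:Jom approx Jom}.
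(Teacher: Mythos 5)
Your proof is correct and follows essentially the same route as the paper's: the first inclusion via Lemma \ref{lemma:Jom approx Jom}, and the second by setting $\si_n=K^\om(Y\upto n)$, chaining them with (\ref{Pom6}), and verifying condition (2) of Lemma \ref{lemma:Jom approx Jom} for $Z=\bigcup_n\si_n$. Your extra remark on the uniqueness of $\si_n$ via (\ref{Pom1}) and (\ref{Pom2}) is a welcome clarification of why $K^\om(Y\upto n)\in T$, which the paper simply asserts.
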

\begin{proof}
To prove $\set{\J^\om(Z)}{Z \in [T]} \subseteq [\JT^\om(T)]$ we can
argue as in the proof of Lemma \ref{lemma:JT paths}, using Lemma
\ref{lemma:Jom approx Jom} in place of Lemma \ref{lemma:J approx J}.

To prove the other inclusion, fix $Y \in [\JT^\om(T)]$. For each $n$,
let $\si_n=K^\om(Y\upto n)\in T$. Since $J^\om(\si_n) = Y \upto n$ by
(\ref{Pom3}), we have $\si_n \subset \si_{n+1}$ for each $n$ by
(\ref{Pom6}). Let $Z=\bigcup_{n\in \N}\si_n \in [T]$. Then, by Lemma
\ref {lemma:Jom approx Jom} we get $Y=\J^\om(Z)$.
\end{proof}

\subsection{$\om$-Jumps versus the epsilon function}\label{ssect:omJ
eps}

Our goal now is to generalize Definition \ref{def:hgJ} with an operator
that uses $\epsop$ rather than $\omop$. We thus wish to define an
operator $h^\om$ that, given an order preserving function $g\colon
\JT^\om (T) \to \X$ (where \X\ is a linear order), returns an order
preserving function $h^\om_g \colon T \to \epsop_\X$. To do so we will
iterate the $h$ operator of Definition \ref{def:hgJ} along the elements
of $\JT^\om(T)$.

Let us give the rough motivation behind the definition of the operator
$h^\om$ below. Suppose we are given an order preserving function
$g\colon\JT^\om (T) \to \X$. For each $i$, we would like to define a
monotone function $f_i\colon \JT^i(T)\to \epsop_\X$ such that
$f_i=h_{f_{i+1}}$, where $h_{f_{i+1}}$ is as in Definition
\ref{def:hgJ}. Notice that the range of this function is correct, using
the fact that $\omop^{\epsop_\X}$ is computably isomorphic to
$\epsop_\X$. However, we do not have a place to start as to define such
$f_i$ we would need $f_{i+1}$, and this recursion goes the wrong way.
Note that if $\tau=J^\om(\si)\in \JT^\om (T)$, then $\la\tau(i)\ra \in
\JT^i(T)$, and we could use $g$ to define $f_i$ at least on the strings
of length 1, of the form $\la\tau(i)\ra$. (This is not exactly what we
are going to do, but it should help picture the construction.) The good
news is that to calculate $f_{i-1} = h_{f_i}$ on strings of length at
most 2, we only need to know the values of $f_i$ on strings of length
at most 1. Inductively, this would allow us to calculate $f_0\colon
T\to \X$ on strings of length at most $i$. Since this would work for
all $i$, we get $f_0$ defined on all $T$. We now give the precise
definition.

First, we need to iterate the Jump Tree operator along any finite
string.

\begin{definition}
If $T$ is a tree we define
\[
\JT^\om_\tau(T) = \set{J^{|\tau|+1}(\si)}{\si\in T \land \tau\subseteq J^{\om}(\si)}.
\]
\end{definition}

Notice that $\JT^\om_\tau(T) \subseteq \JT^{|\tau|+1}(T)$ and that
$\JT^\om_\tau(T)$ is empty when $\tau \notin \JT^\om(T)$. The following
Lemma provides an alternative way of defining $\JT^\om_\tau(T)$ by an
inductive definition.

\begin{lemma}\label{lemma: JT tau recursive}
Given a tree $T \subseteq \Seq$,
\begin{align*}
\JT^\om_\es(T)   &=    \JT(T)  \\
\JT^\om_{\tau\conc\la c\ra}  (T)  &=   \JT(\JT^\om_\tau(T)_{\la c\ra}).
\end{align*}
($T_{\la c\ra}$ was defined in \ref{def:Tsigma} as $\set{\rho\in T}{\la
c\ra \subseteq \rho \lor \rho=\es}$.)
\end{lemma}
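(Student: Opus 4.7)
\emph{Proof plan.} The first identity will follow immediately from unfolding definitions: the condition $\es\subseteq J^\om(\si)$ is vacuous, so
\[
\JT^\om_\es(T)=\set{J^{0+1}(\si)}{\si\in T}=\set{J(\si)}{\si\in T}=\JT(T).
\]

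For the second identity, the engine is the observation
\[
J^\om(\si)(|\tau|)=J^{|\tau|+1}(\si)(0),
\]
which is immediate from Definition \ref{def:Jom si}. This decomposes the condition $\tau\conc\la c\ra\subseteq J^\om(\si)$ into the pair $\tau\subseteq J^\om(\si)$ and $\la c\ra\subseteq J^{|\tau|+1}(\si)$; and, setting $\rho:=J^{|\tau|+1}(\si)$, it aligns $J(\rho)=J^{|\tau|+2}(\si)$ with the indexing on the two sides of the lemma.

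For the inclusion $\subseteq$, given $J^{|\tau|+2}(\si)$ in the left side with $\si\in T$ and $\tau\conc\la c\ra\subseteq J^\om(\si)$, the string $\rho:=J^{|\tau|+1}(\si)$ lies in $\JT^\om_\tau(T)$ by the first half of the decomposition (the witness $\si$ still satisfies $\tau\subseteq J^\om(\si)$), and satisfies $\la c\ra\subseteq\rho$ by the second half, placing $\rho\in\JT^\om_\tau(T)_{\la c\ra}$ and yielding $J(\rho)=J^{|\tau|+2}(\si)$ on the right. For $\supseteq$, I would write an arbitrary element of the right side as $J(\rho)$ with $\rho=J^{|\tau|+1}(\si)\in\JT^\om_\tau(T)_{\la c\ra}$; the hypothesis $\la c\ra\subseteq\rho$ reverses the decomposition to give $\tau\conc\la c\ra\subseteq J^\om(\si)$, so $J(\rho)=J^{|\tau|+2}(\si)$ lies in the left side.

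The only point that will require a small amount of care is the boundary case $\rho=\es$ on the right, which gives $J(\rho)=\es$: here I will verify, using (\ref{P1}), that any $\si$ producing $\es$ on the left (i.e.\ with $\tau\conc\la c\ra\subseteq J^\om(\si)$ and $J^{|\tau|+2}(\si)=\es$) must have $|J^{|\tau|+1}(\si)|=1$ and hence $J^{|\tau|+1}(\si)=\la c\ra$, so the empty string on the left is in fact accounted for by the nonempty $\rho=\la c\ra$ on the right rather than by $\rho=\es$. Beyond this bookkeeping, no serious obstacle is expected: the proof is essentially a careful unwinding of Definition \ref{def:Jom si}, with the main nuisance being to keep the many superscripts, subscripts and length counts aligned.
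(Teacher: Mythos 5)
Your unfolding of the definitions --- in particular the decomposition of $\tau\conc\la c\ra\subseteq J^\om(\si)$ into the pair $\tau\subseteq J^\om(\si)$ and $\la c\ra\subseteq J^{|\tau|+1}(\si)$, via $J^\om(\si)(|\tau|)=J^{|\tau|+1}(\si)(0)$ --- is exactly the content of the paper's one-line ``straightforward induction on $|\tau|$'', and both inclusions are handled correctly for every $\rho$ with $\la c\ra\subseteq\rho$. The problem is that your boundary discussion addresses the wrong direction. What you verify (that any $\si$ with $\tau\conc\la c\ra\subseteq J^\om(\si)$ and $J^{|\tau|+2}(\si)=\es$ has $J^{|\tau|+1}(\si)=\la c\ra$) concerns the inclusion $\subseteq$, where $\es$ on the left was already produced by your general argument since $J(\la c\ra)=\es$ by (\ref{P1}). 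What is actually missing is the case $\rho=\es$ of the inclusion $\supseteq$: by Definition \ref{def:Tsigma} the empty string belongs to $\JT^\om_\tau(T)_{\la c\ra}$ whenever it belongs to $\JT^\om_\tau(T)$, so the right-hand side contains $J(\es)=\es$, and to put $\es$ into the left-hand side you must exhibit some $\si\in T$ with $\tau\conc\la c\ra\subseteq J^\om(\si)$. Your proposal never does this.

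Such a $\si$ exists exactly when $\la c\ra\in\JT^\om_\tau(T)$ (equivalently, by the unlabeled lemma that follows, when $\tau\conc\la c\ra\in\JT^\om(T)$); one can then take $\si=K^\om(\tau\conc\la c\ra)\in T$, using (\ref{Pom6}) and (\ref{Pom3}). In the degenerate case where $\JT^\om_\tau(T)\neq\emptyset$ but $\la c\ra\notin\JT^\om_\tau(T)$, the two sides genuinely differ: the left-hand side is empty while the right-hand side equals $\{\es\}$. So the identity needs the standing hypothesis $\tau\conc\la c\ra\in\JT^\om(T)$, which does hold in every application (e.g.\ in Definition \ref{def:fgtau}, where $c=\si(0)$ for a nonempty $\si\in\JT^\om_\tau(T)$, which is a tree). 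Your write-up should state this hypothesis explicitly and use it to dispatch $\rho=\es$; as it stands, the $\supseteq$ direction does not go through for that $\rho$.
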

\begin{proof}
Straightforward induction on $|\tau|$.
\end{proof}

The next Lemma links $\JT^\om_\tau(T)$ to $\JT^\om(T)$.

\begin{lemma}
Given a tree $T \subseteq \Seq$, $\tau\in \Seq$, and $c \in \N$,
\[
\tau\conc \la c\ra  \in \JT^\om(T)       \iff      \la c \ra \in \JT^\om_{\tau}(T).
\]
\end{lemma}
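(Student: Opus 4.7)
The plan is to unpack both sides using the definition of $J^\om$ (Definition \ref{def:Jom si}) and the auxiliary fact that for any nonempty $J^\om(\si)$, writing $n$ for the least index with $J^n(\si)=\es$, one has $|J^\om(\si)|=n-1$ and $|J^{n-1}(\si)|=1$.

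For the forward direction, assume $\tau\conc\la c\ra\in\JT^\om(T)$. By definition of $\JT^\om(T)$, pick $\si\in T$ with $J^\om(\si)=\tau\conc\la c\ra$. Let $n$ be least with $J^n(\si)=\es$; then $|J^\om(\si)|=n-1=|\tau|+1$, so $n=|\tau|+2$. Hence $J^{|\tau|+1}(\si)$ is the last nonempty iterate, which has length $1$, and its unique entry is
\[
J^{|\tau|+1}(\si)(0)=J^\om(\si)(|\tau|)=(\tau\conc\la c\ra)(|\tau|)=c.
\]
Thus $J^{|\tau|+1}(\si)=\la c\ra$, and since $\tau\subseteq J^\om(\si)$, this $\si$ witnesses $\la c\ra\in\JT^\om_\tau(T)$.

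For the converse, assume $\la c\ra\in\JT^\om_\tau(T)$ and pick $\si\in T$ with $\tau\subseteq J^\om(\si)$ and $J^{|\tau|+1}(\si)=\la c\ra$. By (\ref{Pom1}) applied to $\la c\ra$ we get $J^{|\tau|+2}(\si)=J(\la c\ra)=\es$, while $J^{|\tau|+1}(\si)\neq\es$. So the least $n$ with $J^n(\si)=\es$ is exactly $|\tau|+2$, giving $|J^\om(\si)|=|\tau|+1$. Since $J^\om(\si)$ extends $\tau$ and its last entry is $J^{|\tau|+1}(\si)(0)=c$, we conclude $J^\om(\si)=\tau\conc\la c\ra$, so $\tau\conc\la c\ra\in\JT^\om(T)$.

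There is no real obstacle; the argument is a direct unpacking of the two definitions, the only point requiring a moment of attention being the length-one property of the terminal iterate recorded immediately after Definition \ref{def:Jom si}, which is what synchronizes the index $|\tau|+1$ in $J^{|\tau|+1}(\si)$ with the position $|\tau|$ in $J^\om(\si)$.
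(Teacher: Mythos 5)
Your proof is correct and takes essentially the same route as the paper, which simply asserts that the equivalence follows immediately from the definitions of $\JT^\om(T)$ and $\JT^\om_\tau(T)$; your write-up is exactly that unpacking, with the length-one terminal iterate $J^{n-1}(\si)$ synchronizing the index $|\tau|+1$ with the position $|\tau|$. (One trivial slip: the fact $J(\la c\ra)=\es$ is an instance of (\ref{P1}), not (\ref{Pom1}), but this does not affect the argument.)
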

\begin{proof}
This follows immediately from the definitions of $\JT^\om(T)$ and
$\JT^\om_{\tau}(T)$.
\end{proof}

\begin{definition}\label{def:fgtau}
Let \X\ be a linear ordering and $g\colon \JT^\om (T) \to \X$ be a
function. We define simultaneously for each $\tau \in \JT^\om(T)$ a
function
\[
f_\tau \colon \JT^\om_\tau (T) \to \epsop_\X
\]
by recursion on $|\si|$:
\[
f_\tau (\si) =
\begin{cases}
\eps_{g(\tau)} & \text{if $\si = \es$;}\\
h_{f_{\tau \conc \la \si(0) \ra}} (\si) & \text{if $\si \neq \es$.}
\end{cases}
\]
Here $h_{f_{\tau \conc \la \si(0) \ra}}$ is defined according to
Definition \ref{def:hgJ}. We then define
\[
h^\om_g = h_{f_\es}\colon T \to \epsop_\X.
\]
\end{definition}

\begin{remark}
First of all notice that we are really doing a recursion on $|\si|$. In
fact, to compute $h_{f_{\tau \conc \la \si(0) \ra}} (\si)$ when $\si
\neq \es$ we use $f_{\tau \conc \la \si(0) \ra}$ on strings of the form
$J(\si) \upto i$, which have length $\leq |J(\si)|<|\si|$ by
(\ref{P5}).

Let us notice the functions have the right domains and ranges. The
proof is done simultaneously for all $\tau\in \JT^\om(T)$ by induction
on $|\si|$. Take $\si\in \JT^\om_\tau(T)$ with $|\si|=n$. Suppose that
for all $\tau'\in \JT^\om(T)$ and all $\si'\in \JT^\om_{\tau'}(T)$ with
$|\si'|<n$ we have that $f_{\tau'}(\si')$ is defined and
$f_{\tau'}(\si') \in \epsop_\X$.

If $\si=\es$, then $f_\tau(\si) = \eps_{g(\tau)} \in \epsop_\X$.
Suppose $\si\neq\es$ and let $\tau'=\tau\conc\la\si(0)\ra$. Then
$f_\tau(\si)=h_{f_{\tau'}}(\si)$. When computing $h_{f_{\tau'}}(\si)$,
we only apply $f_{\tau'}$ to strings of the form $J(\si) \upto i$.
These strings have length less than $n$ and, by Lemma
\ref{lemma:JTcomp}, belong to $\JT(\JT^\om_\tau(T)_{\la\si(0)\ra})=
\JT^\om_{\tau'}(T)$ (by Lemma \ref{lemma: JT tau recursive}). By the
induction hypothesis we have that $f_{\tau'}$ is defined on these
strings and takes values in $\epsop_\X$. Therefore,
$h_{f_{\tau'}}(\si)$ is defined and $h_{f_{\tau'}}(\si) \in
\omop^{\epsop_\X}$. Using that $\omop^{\epsop_\X}= \epsop_\X$, we get
that $f_\tau \colon \JT^\om_\tau (T) \to \epsop_\X$.

Finally, since $f_\es \colon \JT(T) \to \epsop_\X$, we get that
$h^\om_g \colon T \to \epsop_\X$.
\end{remark}

\begin{lemma}\label{lemma:om monotone}
If $g\colon \JT^\om(T)\to\X$ is $(\supset,<_\X)$-monotone, then
$h_g^\om\colon T\to\epsop_\X$ is $(\supset,<_{\epsop_\X})$-monotone.
\end{lemma}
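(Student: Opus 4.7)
The plan is to prove a strengthened statement about each auxiliary function $f_\tau$ by induction, and then deduce the lemma by a direct application of Lemma~\ref{lemma:monotone} to $f_\es$. The strengthening is necessary because we must control how $f_\tau(\rho)$ compares not only to $f_\tau(\si)$ for $\es \neq \si \subsetneq \rho$, but also to $f_\tau(\es) = \eps_{g(\tau)}$, and these two cases require different arguments.

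Precisely, I will prove by induction on $n$ the following joint statement: for every $\tau \in \JT^\om(T)$, the restriction of $f_\tau$ to strings of length at most $n$ in $\JT^\om_\tau(T)$ is $(\supset, <_{\epsop_\X})$-monotone, and for every non-empty $\si$ in its domain with $|\si|\leq n$ we have $f_\tau(\si) <_{\epsop_\X} \eps_{g(\tau)}$. The base case $n=0$ is vacuous since only $\si=\es$ qualifies. For the inductive step, fix $\tau$ and a string $\rho$ of length $\leq n+1$ in $\JT^\om_\tau(T)$; set $\tau' = \tau\conc\la\rho(0)\ra$. Since $\tau' \supsetneq \tau$ and $g$ is $(\supset, <_\X)$-monotone, $g(\tau') <_\X g(\tau)$, hence $\eps_{g(\tau')} <_{\epsop_\X} \eps_{g(\tau)}$. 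By definition $f_\tau(\rho) = h_{f_{\tau'}}(\rho)$ and the only values of $f_{\tau'}$ that enter this computation are on strings of the form $J(\rho)\upto i$, which by (\ref{P5}) have length less than $|\rho|\leq n+1$. The inductive hypothesis applied at $\tau'$ thus tells me that each such value is $\leq \eps_{g(\tau')} <_{\epsop_\X} \eps_{g(\tau)}$, and that $f_{\tau'}$ is monotone on these strings.

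From the monotonicity of $f_{\tau'}$ on short strings, I get that the exponents appearing in the sum defining $h_{f_{\tau'}}(\rho)$ are in strictly decreasing order, so the expression is in normal form in $\epsop_\X$. To conclude that $f_\tau(\rho) <_{\epsop_\X} \eps_{g(\tau)}$, I invoke the defining clauses of $\leq_{\epsop_\X}$: each exponent $\beta$ satisfies $\beta <_{\epsop_\X} \eps_{g(\tau)}$, hence $\om^\beta <_{\epsop_\X} \eps_{g(\tau)}$ by the $\om^{t'}$-versus-$\eps_y$ clause (using $\om^{\eps_{g(\tau)}}=\eps_{g(\tau)}$); and then the full sum starts with such a term, so the $k>0$ and $t_0 <_\varphi s_0$ clause gives the strict inequality for the whole expression. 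This establishes the boundedness half of the claim at level $n+1$. For the monotonicity half, take $\rho \supsetneq \si$ with $|\rho|\leq n+1$: if $\si = \es$, we just showed $f_\tau(\rho) <_{\epsop_\X} \eps_{g(\tau)} = f_\tau(\si)$; if $\si\neq\es$, then $\rho$ and $\si$ share the same $\tau'$, both values are computed by $h_{f_{\tau'}}$ using $f_{\tau'}$ on strings of length $\leq n$, and by the inductive hypothesis and the argument of Lemma~\ref{lemma:monotone} (which depends on $f_{\tau'}$ only at those short strings), $h_{f_{\tau'}}(\rho) <_{\epsop_\X} h_{f_{\tau'}}(\si)$.

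The induction being complete, $f_\es\colon \JT^\om_\es(T) = \JT(T) \to \epsop_\X$ is $(\supset, <_{\epsop_\X})$-monotone in its entirety, and one final application of Lemma~\ref{lemma:monotone} to $g' = f_\es$ yields that $h_g^\om = h_{f_\es}\colon T \to \omop^{\epsop_\X} = \epsop_\X$ is $(\supset, <_{\epsop_\X})$-monotone, as required. I expect the main obstacle to be the verification that $f_\tau(\rho) <_{\epsop_\X} \eps_{g(\tau)}$ strictly, since it depends on the precise rewriting rules of $\epsop_\X$; this is why the inductive hypothesis must simultaneously carry the boundedness and the monotonicity statements, and why the use of $g(\tau') <_\X g(\tau)$ to get a strict upper bound is essential.
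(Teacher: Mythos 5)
Your proof is correct and follows essentially the same route as the paper's: a simultaneous induction on string length over all $\tau$, splitting into the case $\si'=\es$ (handled via $g(\tau')<_\X g(\tau)$ and the bound $f_{\tau'}(J(\rho)\upto i)\leq\eps_{g(\tau')}<_{\epsop_\X}\eps_{g(\tau)}$) and the case $\si'\neq\es$ (handled by noting that the argument of Lemma~\ref{lemma:monotone} only consults $f_{\tau'}$ on strings shorter than $|\si|$, by (\ref{P5})), followed by one final application of Lemma~\ref{lemma:monotone} to $f_\es$. Your explicit ``boundedness'' clause is in fact just the instance of $(\supset,<_{\epsop_\X})$-monotonicity with shorter string $\es$, so the strengthened induction hypothesis coincides with the paper's.
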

\begin{proof}
First, we note that by Lemma \ref{lemma:monotone}, it suffices to show
that $f_\es$ is $(\supset,<_{\epsop_\X})$-monotone. We will actually
show that for every $\tau\in \JT^\om(T)$, $f_\tau$ is
$(\supset,<_{\epsop_\X})$-monotone.

The proof is again done simultaneously for all $\tau\in \JT^\om(T)$ by
induction on the length of the strings. Suppose that on strings of
length less than $n$, for every $\tau'$, $f_{\tau'}$ is
$(\supset,<_{\epsop_\X})$-monotone. Let $\si' \subset \si \in
\JT^\om_\tau (T)$ with $|\si|=n$. Let  $\tau'=\tau\conc\la\si(0)\ra$.
Consider first the case when $\si'=\es$. Then $f_\tau(\si') =
\eps_{g(\tau)}$ while $f_\tau(\si)$ is a finite sum of terms of the
form $\om^{f_{\tau'} (J(\si)\upto i)}$. By the induction hypothesis,
the exponent of each such term is less than or equal to $f_{\tau'}(\es)
= \eps_{g(\tau')} <_{\epsop_\X} \eps_{g(\tau)}$. So, the whole sum is
less than $\eps_{g(\tau)} = f_\tau(\si')$. Suppose now that
$\si'\neq\es$. Since the proof of Lemma \ref{lemma:monotone} (based on
the proof of Lemma \ref{h:monotone}) uses the monotonicity of
$f_{\tau'}$ only for strings shorter then $\si$ (by (\ref{P5})), we get
that $h_{f_{\tau'}}(\si') >_{\epsop_\X} h_{f_{\tau'}}(\si)$.
\end{proof}

\begin{theorem}\label{omega}
For every $Z \in \Bai$, there is a $Z$-computable linear ordering $\X$
such that the jump of every descending sequence in $\X$ computes
$Z^{(\om)}$, but there is a $Z$-computable descending sequence in
$\epsop_{\X}$.
\end{theorem}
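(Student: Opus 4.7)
The plan is to mimic the proof of Theorem \ref{0^n}, replacing the finite iterate $\JT^n(T_Z)$ with the $\om$-Jump Tree $\JT^\om(T_Z)$. Set $T_Z = \set{Z \upto n}{n \in \N}$ and define $\X = \la \JT^\om(T_Z), {\KB} \ra$. By Lemma \ref{lemma:JTom comp}, $\X$ is a $Z$-computable linear ordering; by Lemma \ref{lemma:JTom paths}, since $[T_Z] = \{Z\}$, the unique path through the underlying tree is $\J^\om(Z)$.

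For the lower bound, let $f$ be any descending sequence in $\X$. Lemma \ref{lemma:KB} yields a path $Y \in [\JT^\om(T_Z)]$ with $Y \leqt f'$, and the only such path is $Y = \J^\om(Z)$. Combining with Lemma \ref{lemma: Z om jump}, we obtain $Z^{(\om)} \teq \J^\om(Z) \leqt f'$, which is exactly the assertion that the jump of every descending sequence in $\X$ computes $Z^{(\om)}$.

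For the $Z$-computable descending sequence in $\epsop_\X$, I would apply the operator $h^\om_g$ of Definition \ref{def:fgtau} taking $g \colon \JT^\om(T_Z) \to \X$ to be the identity on the common underlying set. Since a proper string extension is strictly smaller in the Kleene--Brouwer ordering, $g$ is $({\supset}, {<_\X})$-monotone, so by Lemma \ref{lemma:om monotone} the resulting $h^\om_g \colon T_Z \to \epsop_\X$ is $({\supset}, {<_{\epsop_\X}})$-monotone. The recursion defining $h^\om_g$ terminates in $|\si|$ steps and uses only the computable operations $J^m$, ordinal-term arithmetic, and the computable $g$, so it is $Z$-computable. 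Hence $\la h^\om_g(Z \upto n) \ra_{n \in \N}$ is a $Z$-computable and strictly $<_{\epsop_\X}$-decreasing sequence in $\epsop_\X$, as required.

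All the genuine technical work has already been absorbed into Lemmas \ref{lemma:JTom paths}, \ref{lemma: Z om jump} and \ref{lemma:om monotone}, which together package the combinatorics of the $\om$-Jump and of $h^\om$ so cleanly that the theorem is essentially a corollary. The only step worth flagging is that the recursion in Definition \ref{def:fgtau} has been arranged so as to recurse on $|\si|$ rather than on an ill-founded tree of $\tau$'s; once that design is in place, plugging in the identity for $g$ is routine.
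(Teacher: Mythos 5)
Your proposal is correct and follows the paper's own proof essentially verbatim: the same ordering $\X = \la \JT^\om(T_Z), {\KB}\ra$, the same appeal to Lemmas \ref{lemma:JTom comp}, \ref{lemma:JTom paths}, \ref{lemma:KB} and \ref{lemma: Z om jump} for the lower bound, and the same choice of $g$ as the identity with Lemma \ref{lemma:om monotone} supplying the descending sequence $\la h^\om_g(Z\upto n)\ra_{n\in\N}$.
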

\begin{proof}
Let $\X = \la \JT^\om (T_Z), {\KB} \ra$ where again $T_Z = \set{Z\upto
n}{n\in \N}$. By Lemma \ref{lemma:JTom comp}, $\X$ is $Z$-computable.
By Lemma \ref{lemma:JTom paths}, $\J^\om(Z)$ is the unique path in
$\JT^\om (T_Z)$. Therefore, by Lemma \ref{lemma:KB}, the jump of every
descending sequence in $\X$ computes $\J^\om(Z) \teq Z^{(\om)}$.

Let $g$ be the identity on $\X$, which is obviously
$(\supset,<_{\X})$-monotone. By Lemma \ref{lemma:om monotone},
$h_g^\om$ is $(\supset,<_{\epsop_{\X}})$-monotone. Since $h_g^\om$ is
computable, $\set{h_g^\om(Z\upto n)}{n\in\N}$ is a $Z$-computable
descending sequence in $\epsop_{\X}$.
\end{proof}

\subsection{Reverse mathematics results}\label{ssect:rmepsilon}

In this section, we work in the weak system \RCA. Therefore, we do not
have an operation that given $Z\in\Seq$, returns $\J(Z)$, let alone
$\J^{\om}(Z)$. However, the predicates with two variables $Z$ and $Y$
that say $Y=\J(Z)$ and $Y=\J^\om(Z)$ are arithmetic as witnessed by
Lemmas \ref {lemma:J approx J} and \ref {lemma:Jom approx Jom}. Notice
that if condition (2) of Lemma \ref {lemma:J approx J} holds, then
\RCA\ can recover the sequence of $t_i$'s in the definition of $\J(Z)$
and show that $\J(Z)$ is as defined in \ref {def:J}. Furthermore, \RCA\
can show that $\J(Z)\teq Z'$ and hence that \ACA\ is equivalent to
\RCA+$\forall Z\exists Y(Y=\J(Z))$, and \ACApr\ is equivalent to
\RCA+$\forall Z\forall n\exists Y(Y=\J^n(Z))$.

Also, if condition (2) of Lemma \ref {lemma:Jom approx Jom} holds, then
as in the proof of that lemma, in \RCA\ we can uniformly build
$\J^m(Z)$ as $\bigcup_{n}J^m(\si_n)$, and show that $\J^\om(Z)$ is as
defined in Definition \ref {def:Jom si}. Furthermore, we can prove
Lemma \ref {lemma: Z om jump} in \RCA: if $Y=\J^\om(Z)$, then $Y$ can
compute $Z^{(\om)}$, and if $X=Z^{(\om)}$, then $X$ can compute a real
$Y$ such that $Y=\J^\om(Z)$. Therefore, we get that \ACApl\ is
equivalent to \RCA+$\forall Z\exists Y(Y=\J^{\om}(Z))$.

We already know, from Girard's result Theorem \ref {Girard} that over
\RCA, the statement \lq\lq if \X\ is a well-ordering then $\omop^\X$ is
a well-ordering\rq\rq\ is equivalent to \ACA. We now start climbing up
the ladder.

\begin{theorem}\label{ACApr}
Over \RCA, $\forall n\, \WO(\X\mapsto\iexpop n\X)$ is equivalent to
\ACApr.
\end{theorem}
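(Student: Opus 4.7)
The forward direction is precisely Corollary \ref{cor forward ACApr}. For the reverse direction, I work inside \RCA\ assuming $\forall n\,\WO(\X\mapsto\iexpop n\X)$, and aim to verify the jump-existence schema $\forall Z\,\forall n\,\exists Y\,(Y=\J^n(Z))$, which by the discussion preceding this theorem is equivalent to \ACApr.

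Specializing to $n=1$ gives $\WO(\X\mapsto\omop^\X)$, which by Girard's Theorem~\ref{Girard} already yields \ACA. From this point on I may freely form single jumps of existing sets, so $f'$ exists for any set $f$.

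Now fix $Z$ and $n$. I formalize the construction behind Theorem~\ref{0^n} in \RCA. Starting from $T_0=\set{Z\upto k}{k\in\N}$ and iterating $T_{i+1}=\JT(T_i)$, Lemma~\ref{lemma:JTcomp} (whose proof is purely computability-theoretic) produces a $Z$-computable index for $T_i$ uniformly in $i\leq n$, so $\X_Z^n=\la T_n,\KB\ra$ exists as a linear ordering in \RCA. The same kind of primitive recursion in $m\leq n$, applied to $g_{m+1}=h_{g_m}$ of Definition~\ref{def:hgJ}, yields a $Z$-computable function $g_n\colon T_0\to\iexpop n{\X_Z^n}$, and by Lemma~\ref{lemma:monotone} the sequence $\la g_n(Z\upto j)\ra_{j\in\N}$ is a strictly descending sequence in $\iexpop n{\X_Z^n}$. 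Hence $\iexpop n{\X_Z^n}$ is not a well-ordering; invoking (the contrapositive of) $\WO(\X\mapsto\iexpop n\X)$, there exists a descending sequence $f$ in $\X_Z^n$. Using \ACA, form $f'$; the second clause of Lemma~\ref{lemma:KB} applied to $T_n$ gives $\J^n(Z)\leqt f'$, and since $Z^{(n)}\teq\J^n(Z)$ (and the passage from $\J^n(Z)$ to $Z^{(n)}$ is $\Delta^0_1$), the set $Z^{(n)}$ exists, as required.

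The only real work is bookkeeping: checking that the $n$-fold iterations defining $T_n$ and $g_n$ go through uniformly in $n$ over \RCA, which is routine because $\JT$ and $h_g$ are primitive-recursive operators on indices, and that the final extraction of $Z^{(n)}$ from $f'$ can be carried out once \ACA\ is in hand. There is no substantive new obstacle beyond what was already handled in Theorem~\ref{0^n}, so the proof reduces to assembling these ingredients.
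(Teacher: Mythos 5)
Your proof is correct and follows essentially the same route as the paper's: get \ACA\ from Girard via the $n=1$ case, build $T_n=\JT^n(T_Z)$ and the $Z$-computable descending sequence in $\iexpop n{\X_Z^n}$ inside \RCA, apply the hypothesis and Lemma \ref{lemma:KB} to extract a path, and recover $Z^{(n)}$. The only point you gloss over is that Lemma \ref{lemma:KB} only yields \emph{some} path $Y\in[T_n]$ with $Y\leqt f'$; identifying $Y$ with $\J^n(Z)$ in \RCA\ (where $\J^n(Z)$ is not yet known to exist) requires the short extra argument the paper supplies, namely setting $Y_i=\K^{n-i}(Y)$, checking $Y_i\in[T_i]$ and $Y_i=\J(Y_{i-1})$ via Lemma \ref{lemma:JT paths}, and concluding $Y_0=Z$ since $Z$ is the unique path through $T_Z$.
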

\begin{proof}
We showed, in Corollary \ref {cor forward ACApr},  that
\ACApr$\vdash\forall n\, \WO(\X\mapsto\iexpop n\X)$.

Suppose now that $\forall n\, \WO(\X\mapsto\iexpop n\X)$ holds.
Consider $Z\in\Bai$ and $n\in\om$; we want to show that $\J^n(Z)$
exists. By Girard's theorem we can assume \ACA. Let $\X_Z^n = \la T_n,
{\KB} \ra$, where $T_n = \JT^n(T_Z)$ as in the proof of Theorem
\ref{0^n}. The proof that there is a $Z$-computable descending sequence
in $\iexpop n{\X_Z^n}$ is finitary and goes through in \RCA. So, by
$\WO(\X\mapsto\iexpop n\X)$ we get a descending sequence in $\X_Z^n$.
By Lemma \ref {lemma:KB}, using \ACA, we get $Y_n \in [T_n]$. For each
$i\leq n$, let $Y_i=\K^{n-i}(Y_n)$. Lemma \ref {lemma:JT paths} shows
that for each $i$, $Y_i\in [T_i]$ and $Y_i=\J(Y_{i-1})$. Since $Z$ is
the only path through $T_Z$, we get that $Y_0=Z$, and so $Y_n=\J^n(Z)$.
\end{proof}

\begin{theorem}\label{rev:ACApl}
Over \RCA, $\WO(\X\mapsto\epsop_\X)$ is equivalent to \ACApl.
\end{theorem}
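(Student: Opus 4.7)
The forward direction is exactly Corollary \ref{cor forward ACApl}, so the plan is to prove the reverse: assume $\WO(\X\mapsto\epsop_\X)$ and derive, over \RCA, the axiom $\forall Z\exists Y(Y=\J^\om(Z))$, which by the discussion preceding the theorem is equivalent to \ACApl. The strategy will mirror the proof of Theorem \ref{ACApr}: fix $Z\in\Bai$, form the $Z$-computable linear ordering $\X_Z = \la \JT^\om(T_Z), {\KB}\ra$ where $T_Z=\set{Z\upto n}{n\in\N}$ (this is legitimate in \RCA\ by Lemma \ref{lemma:JTom comp}), produce a $Z$-computable descending sequence in $\epsop_{\X_Z}$, invoke the hypothesis to obtain a descending sequence in $\X_Z$, and from it extract a path $Y\in[\JT^\om(T_Z)]$ which by Lemma \ref{lemma:JTom paths} must be $\J^\om(Z)$.

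First I would get \ACA\ for free: since $\omop^\X$ embeds into $\epsop_\X$ (by viewing pure $\om$-sums as $\epsop$-terms built from $0$), the hypothesis $\WO(\X\mapsto\epsop_\X)$ implies $\WO(\X\mapsto\omop^\X)$, which by Theorem \ref{Girard} gives \ACA. The descending sequence in $\epsop_{\X_Z}$ will be produced exactly as in Theorem \ref{omega}: taking $g$ to be the identity on $\JT^\om(T_Z)$, the function $h_g^\om$ of Definition \ref{def:fgtau} is $(\supset,<_{\epsop_{\X_Z}})$-monotone by Lemma \ref{lemma:om monotone}, so $\la h_g^\om(Z\upto n)\ra_{n\in\N}$ is a $Z$-computable infinite descending sequence. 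Crucially, both $h_g^\om$ and this sequence are defined by primitive recursion on string length from the computable operations $J$, $K$ and $\JT^\om_\tau$, so the construction is formalizable in \RCA.

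Having a descending sequence in $(\JT^\om(T_Z),\KB)$, Lemma \ref{lemma:KB} (provable in \RCA) yields, modulo one application of the jump, a path $Y\in[\JT^\om(T_Z)]$; the jump is available because we already have \ACA. Now I would formalize Lemma \ref{lemma:JTom paths} in \RCA: for each $n$, set $\si_n = K^\om(Y\upto n)\in T_Z$, note $\si_n\subset\si_{n+1}$ by (\ref{Pom6}), so $\si_n\subset Z$ (the unique infinite branch of $T_Z$, recoverable in \RCA), with $|\si_n|>n$ and $J^\om(\si_n)=Y\upto n$ by (\ref{Pom3}) and (\ref{Pom5}). This is exactly condition (2) of Lemma \ref{lemma:Jom approx Jom}, which, as remarked at the start of Section \ref{ssect:rmepsilon}, is the arithmetic predicate that \RCA\ uses to witness $Y=\J^\om(Z)$. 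Hence $\J^\om(Z)$ exists, as required.

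The main obstacle will be the bookkeeping needed to verify that the recursive construction of $h_g^\om$ really goes through in \RCA: Definition \ref{def:fgtau} is a simultaneous recursion over all $\tau\in\JT^\om(T_Z)$, and one must check that for the specific input $\si=Z\upto n$ the recursion only unfolds through finitely many auxiliary functions $f_\tau$, each computed from the computable functions $J$, $K$ and the operator $h$ of Definition \ref{def:hgJ}. Once one observes that the range of $\tau$ needed to compute $h_g^\om(Z\upto n)$ is bounded in terms of $|J^\om(Z\upto n)|\leq n$, the construction reduces to a finite iterated application of the computable operator $h$, and $\Sigma^0_1$-induction suffices to carry out the verification inside \RCA.
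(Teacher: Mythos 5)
Your proof is correct and follows essentially the same route as the paper's: build $\X=\la\JT^\om(T_Z),\KB\ra$, use $h^\om_g$ with $g$ the identity to get the $Z$-computable descending sequence in $\epsop_\X$, apply the hypothesis, and recover $Y=\J^\om(Z)$ via the \RCA-formalized Lemma \ref{lemma:JTom paths}; you even make explicit the step (deriving \ACA\ from the hypothesis) that the paper leaves implicit. One nitpick: the embedding of $\omop^\X$ into $\epsop_\X$ must use the constants $\eps_x$ (e.g.\ $\la x_0,\dots,x_{k-1}\ra\mapsto\om^{\eps_{x_0}}+\dots+\om^{\eps_{x_{k-1}}}$) rather than terms built from $0$ alone, since the latter only yield an initial segment of order type $\eps_0$ and cannot accommodate an arbitrary $\X$.
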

\begin{proof}
We already showed that \ACApl\ proves $\WO(\X\mapsto\epsop_\X)$ in
Corollary \ref{cor forward ACApl}.

Assume \RCA+$\WO(\X\mapsto\epsop_\X)$. Let $Z\in \Bai$; we want to show
that there exists $Y$ with $Y=\J^{\om}(Z)$. Build $\X = \la \JT^\om
(T_Z), {\KB} \ra$ as in Theorem \ref {omega}. The proof that
$\epsop_{\X}$ has a $Z$-computable descending sequence is completely
finitary and can be carried out in \RCA. By $\WO(\X\mapsto\epsop_\X)$,
we get that $\X$ has a descending sequence. Since we have \ACA\ we can
use this descending sequence to get a path $Y$ through $\JT^\om (T_Z)$.
Now, the proof of Lemma \ref {lemma:JTom paths} translates into a proof
in \RCA\ that $Y$ is $\J^\om$ of some path through $T_Z$. Since $Z$ is
the only path through $T_Z$, we get $Y=\J^\om(Z)$ as wanted.
\end{proof}

\section{General Case}\label{sect:General case}

In this section we define the $\om^\a$-Jump operator, the $\om^\a$-Jump
function, and the $\om^\a$-Jump Tree, for all computable ordinals $\a$.
The constructions of Sections \ref{sect:exp} and \ref{sect:eps}, where
we considered $\a=0$ and $\a=1$ respectively, are thus the simplest
cases of what we will be doing here.

The whole construction is by transfinite recursion, and the base case
was covered in Section \ref{sect:exp}. If $\a>0$ is a computable
ordinal, we assume that we have a fixed non-decreasing computable
sequence of ordinals $\set{\a_i}{i\in \N}$ such that
$\a=\sup_{i\in\N}(\a_i+1)$. (So, if $\a=\g+1$, we can take $\a_i=\g$
for all $i$.) Notice that we have $\sum_{i\in\N}\om^{\a_i}=\om^\a$. In
defining the $\om^\a$-Jump operator, the $\om^\a$-Jump function, and
the $\om^\a$-Jump Tree we make use the $\om^{\a_i}$-Jump operator, the
$\om^{\a_i}$-Jump function, and the $\om^{\a_i}$-Jump Tree for each
$i$.

\subsection{The iteration of the jump}\label{ssect:iterating jump}

Our presentation here is different from the one of previous sections,
where we defined the operator first. Here we start from the
$\om^\a$-Jump function, prove its basic properties, then use it to
define the $\om^\a$-Jump Tree, and eventually introduce the
$\om^\a$-Jump operator.

Let $\a>0$ be a computable ordinal and $\set{\a_i}{i\in \N}$ be its
canonical sequence as described above. To simplify the notation in the
definition of the $\om^\a$-Jump function, assume we already defined
$\Jom{\a_i}$ and $\Kom{\a_i}$ for all $i$, and let $\Jom\a_n\colon \Seq
\to \Seq$ and $\Kom\a_n\colon \Seq \to \Seq$ be defined recursively by
\begin{alignat*}{2}
\Jom\a_0 & = id; \qquad & \Jom\a_{n+1} & = \Jom{\a_n} \circ \Jom\a_n;\\
\Kom\a_0 & = id; \qquad & \Kom\a_{n+1} & = \Kom\a_n \circ \Kom{\a_n}.
\end{alignat*}
In other words:
\begin{align*}
\Jom\a_n &= J^{\om^{\a_{n-1}}}\circ J^{\om^{\a_{n-2}}} \circ \cdots \circ \Jom{\a_0}, \\
\Kom\a_n &= K^{\om^{\a_0}}\circ K^{\om^{\a_{1}}} \circ \cdots \circ \Kom{\a_{n-1}}.
\end{align*}

\begin{definition}
The \emph{$\om^\a$-Jump function} is the map $\Jom\a \colon \Seq \to
\Seq$ defined by
\[
\Jom\a(\si) = \la \Jom\a_1(\si)(0), \Jom\a_2(\si)(0), \dots,\Jom\a_{n-1}(\si)(0)\ra,
\]
where $n$ is least such that $\Jom\a_n(\si)=\es$. In this case, since
$\Jom\a_n(\si)=\Jom{\a_{n-1}}(\Jom\a_{n-1}(\si))$, by (\ref{Pa1}) below
applied to $\a_{n-1}$, we have $|\Jom\a_{n-1}(\si)| =1$.

Given $\tau\in \Jom\a(\Seq)$, let
\[
\Kom\a(\tau) = \Kom\a_{|\tau|}(\ell(\tau)).
\]
In particular $\Kom\a(\es)=\es$, since $\Kom\a_0$ is the identity
function.
\end{definition}

Since for $\a=1$ we have $\a_i=0$ for every $i$, the definitions we
just gave match exactly Definitions \ref{def:Jom si} and \ref{def:Kom
si}, where we introduced $J^\om$ and $K^\om$. We will not mention again
this explicitly, but the reader should keep in mind that the case
$\a=1$ of Section \ref{sect:eps} is the blueprint for the work of this
section.

Notice that, by transfinite induction, $\Jom\a$ and $\Kom\a$ are
computable.

The following properties generalize those of Lemmas \ref{lemma:J K} and
\ref{lemma:Jom Kom}. We will refer to them, as usual,  as (\ref{Pa1}),
\dots, (\ref{Pa7}).

\begin{lemma}  \label{lemma:J om al K}
For $\si, \tau'\in \Seq$, $\tau\in \Jom\a(\Seq)$,
\begin{enumerate}\renewcommand{\theenumi}{P$^{\om^\a}\!$\arabic{enumi}}
\item $\Jom\a(\si)=\es$ if and only if $|\si|\leq 1$. \label{Pa1}
\item $\Kom\a(\Jom\a(\si))= \si$ for $|\si|\geq 2$. \label{Pa2}
\item $\Jom\a(\Kom\a(\tau))=\tau$.        \label{Pa3}
\item If $\si\neq\si'$ and at least one has length $\geq 2$, then
    $\Jom\a(\si)\neq \Jom\a(\si')$.   \label{Pa4}
\item $|\Jom\a(\si)|<|\si|$ and $|\Kom\a(\tau)|>|\tau|$ except when
    $\tau=\es$.    \label{Pa5}
\item If $\tau'\subset\tau$ then $\tau'\in \Jom\a(\Seq)$ and
    $\Kom\a(\tau')\subset \Kom\a(\tau)$.      \label{Pa6}
\item If $\Jom\a(\si')\subseteq \Jom\a(\si)$ and $\a>0$ then for
    every $m$, $\Jom\a_m(\si')\subseteq \Jom\a_m(\si)$. \label{Pa7}
\end{enumerate}
\end{lemma}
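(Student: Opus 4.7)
The plan is to prove all seven properties simultaneously by transfinite induction on $\a$, assuming the lemma holds for every $\a_i$ in the canonical sequence of $\a$ (using Lemma \ref{lemma:J K} whenever $\a_i = 0$, in which case (\ref{Pa7}) is vacuous). The case $\a = 1$ of Lemma \ref{lemma:Jom Kom} is the blueprint: the iterates $\Jom\a_n$ and $\Kom\a_n$ unfold as $\Jom{\a_{n-1}} \circ \cdots \circ \Jom{\a_0}$ and $\Kom{\a_0} \circ \cdots \circ \Kom{\a_{n-1}}$, playing the roles of $J^n$ and $K^n$ in that proof.

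First I would dispatch the easy properties. For (\ref{Pa5}), inductive (\ref{Pa5}) applied to $\Jom\a_{n+1}(\si) = \Jom{\a_n}(\Jom\a_n(\si))$ forces lengths to strictly decrease until the iteration hits $\es$, so $|\Jom\a(\si)| < |\si|$; iterating inductive (\ref{Pa5}) through $\Kom\a_{|\tau|}(\ell(\tau))$ gives the dual length bound for $\Kom\a$. Property (\ref{Pa1}) reduces to inductive (\ref{Pa1}) for $\a_0$ applied to $\Jom\a_1(\si) = \Jom{\a_0}(\si)$. A preliminary identity, $\Jom\a_m \circ \Kom\a_m = \mathrm{id}$ on the appropriate domain, follows by an auxiliary induction on $m$ using inductive (\ref{Pa3}) for each $\a_j$. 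For (\ref{Pa2}), if $|\si| \geq 2$ and $n$ is least with $\Jom\a_n(\si) = \es$, then $|\Jom\a_{n-1}(\si)| = 1$, so $\ell(\Jom\a(\si)) = \Jom\a_{n-1}(\si)$, and
\[
\Kom\a(\Jom\a(\si)) = \Kom\a_{n-1}(\Jom\a_{n-1}(\si)) = \si
\]
by peeling off one pair $\Kom{\a_j} \circ \Jom{\a_j}$ at a time via inductive (\ref{Pa2}). Properties (\ref{Pa3}) and (\ref{Pa4}) then follow from (\ref{Pa1}) and (\ref{Pa2}) by the standard arguments used in Lemma \ref{lemma:Jom Kom}.

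The core of the proof is (\ref{Pa6}), adapted from (\ref{Pom6}) and Figure \ref{fig tau = Jom si}. Given $\es \neq \tau' \subset \tau = \Jom\a(\si)$, set $\si' = \Kom\a_{|\tau'|}(\ell(\tau'))$. Since $|\tau'| < |\tau|$ implies $|\Jom\a_{|\tau'|}(\si)| > 1$, we have $\ell(\tau') \subset \Jom\a_{|\tau'|}(\si)$, and $\Jom\a_{|\tau'|}(\si') = \ell(\tau')$ by the preliminary identity. The key claim is that $\Jom\a_n(\si') \subseteq \Jom\a_n(\si)$ for all $n \leq |\tau'|$, proved by downward induction on $n$: the base case $n = |\tau'|$ was just established, and the inductive step from $n+1$ to $n$ applies inductive (\ref{Pa6}) and (\ref{Pa2}) for $\a_n$ to $\Jom\a_{n+1}(\si') \subseteq \Jom\a_{n+1}(\si) = \Jom{\a_n}(\Jom\a_n(\si))$, giving
\[
\Jom\a_n(\si') = \Kom{\a_n}(\Jom\a_{n+1}(\si')) \subseteq \Kom{\a_n}(\Jom{\a_n}(\Jom\a_n(\si))) = \Jom\a_n(\si).
\]
Setting $n = 0$ gives $\si' \subset \si$, and reading off leading entries for $j < |\tau'|$ yields $\Jom\a(\si')(j) = \Jom\a_{j+1}(\si')(0) = \Jom\a_{j+1}(\si)(0) = \tau(j) = \tau'(j)$, so $\Jom\a(\si') = \tau'$. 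Property (\ref{Pa7}) is then a direct corollary: given $\Jom\a(\si') \subseteq \Jom\a(\si)$ with $\a > 0$, set $\tau' = \Jom\a(\si')$; by (\ref{Pa2}) the above construction recovers the original $\si'$, the downward induction yields $\Jom\a_m(\si') \subseteq \Jom\a_m(\si)$ for $m \leq |\tau'|$, and (\ref{Pa5}) forces $\Jom\a_m(\si') = \es$ for $m > |\tau'|$.

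The principal obstacle is bookkeeping the nested compositions: one must always match the outermost factor of $\Kom\a_n$ or $\Jom\a_n$ with the correct index $\a_j$ when invoking the inductive hypothesis, being careful that the domain conditions required by inductive (\ref{Pa2}) and (\ref{Pa6}) are met at each stage. Once this is set up, every individual verification is routine and parallels the $\a = 1$ case.
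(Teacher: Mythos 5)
Your proposal is correct and follows essentially the same route as the paper: transfinite induction on $\a$ with the $\a=1$ case of Lemma \ref{lemma:Jom Kom} as template, the easy properties reduced to the inductive hypotheses for the $\a_i$, and the core of (\ref{Pa6}) being the choice $\si'=\Kom\a_{|\tau'|}(\ell(\tau'))$ together with the chain of inclusions $\Jom\a_n(\si')\subseteq\Jom\a_n(\si)$ obtained from inductive (\ref{Pa6}) and (\ref{Pa2}); your downward induction on $n$ is exactly the paper's induction on $i=|\tau'|-n$, and (\ref{Pa7}) is read off from the same equation in both treatments.
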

\begin{proof}
The proof is by transfinite induction on $\a$. The case $\a=0$ is Lemma
\ref{lemma:J K}.

Since $\Jom\a(\si)=\es$ if and only if $\Jom{\a_0}(\si)=\es$,
(\ref{Pa1}) follows from the same property for $\a_0$.

To prove (\ref{Pa2}) let $|\Jom\a(\si)|=n-1>0$. Then $\ell(\Jom\a(\si))
= \la \Jom\a_{n-1}(\si)(0) \ra = \Jom\a_{n-1}(\si)$ because
$|\Jom\a_{n-1}(\si)|=1$ as noticed above. Since $\Kom\a(\Jom\a(\si)) =
\Kom\a_{n-1} (\Jom\a_{n-1}(\si))$, $\Kom\a(\Jom\a(\si)) = \si$ follows
from (\ref{Pa2}) for $\a_{n-2}, \a_{n-3}, \dots, \a_0$.

As in the proof of the case $\a=1$ in Lemma \ref{lemma:Jom Kom},
(\ref{Pa3}), (\ref{Pa4}) and (\ref{Pa5}) follow from the properties we
already proved.

The proof of (\ref{Pa6}) is also basically the same as the proof of
(\ref{Pom6}). We recommend the reader to have Figure \ref{fig tau =
Jomalpha si} in mind while reading the proof. The nontrivial case is
when $\tau' \neq \es$. Let $\si$ be such that $\tau=\Jom\a(\si)$. The
idea is to define $\si'\subset \si$ as in the picture and then show
that $\tau'=\Jom\a(\si')$. Notice that $|\si|>|\tau|\geq2$ by
(\ref{Pa5}), and that $\si=\Kom\a(\tau)$ by (\ref{Pa2}). Notice also
that $\ell(\tau') = \la \tau(|\tau'|-1)\ra = \la
\Jom\a_{|\tau'|}(\si)(0) \ra \subset \Jom\a_{|\tau'|}(\si)$, where the
strict inclusion is  because $|\tau'|<|\tau|$ and hence
$|\Jom\a_{|\tau'|}(\si)|>1$. By induction on $i \leq |\tau'|$ we can
show, using (\ref{Pa6}) and (\ref{Pa2}) for $\a_{|\tau'|-1}, \dots,
\a_1, \a_0$, that
\begin{align*}
(\Kom{\a_{|\tau'|-i}} \circ \dots \circ \Kom{\a_{|\tau'|-1}})(\ell(\tau'))
& \subset (\Kom{\a_{|\tau'|-i}} \circ \dots \circ \Kom{\a_{|\tau'|-1}})(\Jom\a_{|\tau'|}(\si))\\
& = \Jom\a_{|\tau'|-i}(\si)
\end{align*}
and $(\Kom{\a_{|\tau'|-i}} \circ \dots \circ \Kom{\a_{|\tau'|-1}})
(\ell(\tau')) \in \Jom\a_{|\tau'|-i}(\Seq)$. In particular, when
$i=|\tau'|$, if we set $\si' =\Kom\a_{|\tau'|}(\ell(\tau'))$, we obtain
$\si' \subset \si$. Furthermore, by (\ref{Pa2}) applied to
$\a_{0},\dots,\a_{|\tau'|-i-1}$, we also get
\begin{equation}\label{bb}
\Jom\a_{|\tau'|-i}(\si') = (\Kom{\a_{|\tau'|-i}} \circ \dots \circ \Kom{\a_{|\tau'|-1}})(\ell(\tau')) \subset \Jom\a_{|\tau'|-i}(\si).
\end{equation}
Therefore, for every $j<|\tau'|$
\[
\Jom\a(\si')(j)=\Jom\a_{j+1}(\si')(0)=\Jom\a_{j+1}(\si)(0)=\tau(j)=\tau'(j).
\]
Since $\Jom\a_{|\tau'|-1}(\si') = \ell(\tau')$ which has length 1, we
get that $\Jom\a(\si')$ has length $|\tau'|$ as wanted.

For (\ref{Pa7}) let $\tau'=\Jom\a(\si')$. Then, if $i=|\tau'|-m$,
equation \ref{bb} shows that $\Jom\a_m(\si')\subseteq \Jom\a_m(\si)$.
\end{proof}

\begin{figure}
{\Small
\[
\xymatrix@=10pt{
\tau \ar@{}|{=}[r] \ar@{}|(.4){\shortparallel}[dd]& \JJom\a(\si)  \ar@{}|(.4){\shortparallel}[dd] \\
\ar@{}|{\frown}[d]	&	\ar@{}|{\frown}[d] &\ar@{-}[dddddd]	&&   \emptyset \\
\tau(|\tau|-1)\ar@{}|{=}[r]	&\Jom\a_{|\tau|}(\si)(0)&	&\la \tau(|\tau|-1)\ra\ar@{}|(.6){=}[r]&   \Jom\a_{|\tau|}(\si)  \ar_ {\Jom{\a_{|\tau|}}}[u] \\
\vdots	& \vdots			&&&&	\ddots \ar_ {\Jom{\a_{|\tau|-1}}}[ul] \\
\tau(|\tau'|-1)\ar@{}|{=}[r]	& \Jom\a_{|\tau'|}(\si)(0)	&& \la \tau(|\tau'|-1)\ra\ar@{}|(.6){=}[r]  &\Jom\a_{|\tau'|}(\si') \ar@{}|(.8){\subset}[r]  \ar_ {\Kom{\a_{|\tau'|-1}}}[dr]  & \cdots  \ar@{}|(.3){\subset}[r] &  \Jom\a_{|\tau'|}(\si)  \ar_ {\Jom{\a_{|\tau'|}}}[ul] \\
\vdots& \vdots	&&&&\ddots \ar_ {\Kom{\a_1}}[dr]&		&\ddots  \ar_{\Jom{\a_{|\tau'|-1}}}[ul]\\
\tau(0)\ar@{}|{=}[r]& \Jom\a_1(\si)(0)	&&&&& \Jom\a_1(\si') \ar_ {\Kom{\a_0}}[dr]  \ar@{}|(.6){\subset}[r]    & \cdots  \ar@{}|(.3){\subset}[r]    & \Jom\a_1(\si) \ar_{\Jom{\a_1}}[ul]\\
\ar@{}|{\smile}[u]	&	\ar@{}|{\smile}[u]&&&&&		 &\si'	\ar@{}|(.6){\subset}[r]    & \cdots  \ar@{}|(.3){\subset}[r]       &  \si \ar_{\Jom{\a_0}}[ul]\\
}
\]}
\caption{Assuming $\tau=\Jom\a(\si)$ and $\tau'\subset\tau$.}\label{fig tau = Jomalpha si}
\end{figure}

We can now introduce the $\om^\a$-Jump Tree and prove its
computability.

\begin{definition}
Given a tree $T \subseteq \Seq$ the \emph{$\om^\a$-Jump Tree of $T$} is
\[
\JTom{\a}(T) = \set{\Jom\a(\si)}{\si\in T}.
\]
\end{definition}

\begin{lemma}\label{lemma:JTomegacomp al}
For every tree $T$, $\JTom{\a}(T)$ is a tree computable in $T$.
\end{lemma}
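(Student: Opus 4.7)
The plan is to mirror the proof of Lemma \ref{lemma:JTcomp} (and its analogue Lemma \ref{lemma:JTom comp}), simply replacing the properties (\ref{P1})--(\ref{P6}) of $J$, $K$ by the properties (\ref{Pa1})--(\ref{Pa6}) of $\Jom\a$, $\Kom\a$ established in Lemma \ref{lemma:J om al K}. By transfinite induction on $\a$ together with the recursive definition of $\Jom\a$ and $\Kom\a$ in terms of the $\Jom{\a_i}$ and $\Kom{\a_i}$, both functions are computable; this observation, already noted just before Lemma \ref{lemma:J om al K}, is what makes the computability of the tree straightforward once the combinatorial properties are in hand.

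To verify the tree property, I would take $\tau \subseteq \Jom\a(\si)$ with $\si\in T$ and check $\tau\in\JTom\a(T)$. The case $\tau=\Jom\a(\si)$ is immediate. If $\tau\subset\Jom\a(\si)$, then (\ref{Pa6}) gives both $\tau\in\Jom\a(\Seq)$ and $\Kom\a(\tau)\subset\Kom\a(\Jom\a(\si))$. Since $|\si|\leq 1$ would make $\Jom\a(\si)=\es$ by (\ref{Pa1}) and hence rule out a proper subseteq, we have $|\si|\geq 2$, so (\ref{Pa2}) gives $\Kom\a(\Jom\a(\si))=\si$, and therefore $\Kom\a(\tau)$ is a proper initial segment of $\si$ and lies in $T$. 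Combined with (\ref{Pa3}), this yields $\tau=\Jom\a(\Kom\a(\tau))\in\JTom\a(T)$.

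For computability, I would argue that for every $\tau\in\Seq$,
\[
\tau\in\JTom\a(T) \iff \tau=\Jom\a(\Kom\a(\tau)) \text{ and } \Kom\a(\tau)\in T.
\]
The right-to-left direction is immediate. Conversely, if $\tau=\Jom\a(\si)$ with $\si\in T$, then either $\tau=\es$ (and $\Kom\a(\es)=\es\in T$) or $|\si|\geq 2$ and (\ref{Pa2}) gives $\Kom\a(\tau)=\si\in T$; in either case (\ref{Pa3}) yields $\tau=\Jom\a(\Kom\a(\tau))$. The right-hand side of the displayed equivalence is $T$-computable since $\Jom\a$ and $\Kom\a$ are computable, so $\JTom\a(T)$ is $T$-computable.

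There is no real obstacle here beyond invoking Lemma \ref{lemma:J om al K}; the only point that requires any care is confirming, by transfinite induction on $\a$, that the recursive definitions of $\Jom\a$ and $\Kom\a$ actually produce computable functions (the bounded search for the least $n$ with $\Jom\a_n(\si)=\es$ terminates because $|\Jom\a_n(\si)|$ strictly decreases in $n$ by iterated use of (\ref{Pa5}) at each $\a_i$). Once this is observed, the argument is a direct transcription of the earlier proofs.
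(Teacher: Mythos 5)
Your proof is correct and follows exactly the paper's route: the paper simply says the argument is the same as for Lemma \ref{lemma:JTcomp}, with Lemma \ref{lemma:J om al K} replacing Lemma \ref{lemma:J K}, and your write-up is a faithful (and accurately detailed) transcription of that argument, including the characterization $\tau\in\JTom\a(T)\iff\tau=\Jom\a(\Kom\a(\tau))\ \&\ \Kom\a(\tau)\in T$ and the observation that $\Jom\a$, $\Kom\a$ are computable by transfinite induction.
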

\begin{proof}
The proof is again the same as the one of Lemma \ref{lemma:JTcomp},
using Lemma \ref{lemma:J om al K} in place of Lemma \ref{lemma:J K}.
\end{proof}

We now define the $\om^\a$-Jump operator $\JJom\a\colon\Bai\to\Bai$ by
transfinite induction: the base case is the Jump operator $\J$
(Definition \ref{def:J}). Given $\a$ we assume that $\JJom{\a_n}$ has
been defined for all $n$. To simplify the notation let us define
$\JJom{\a}_n$ recursively by $\JJom\a_0  = id$, $\JJom\a_{n+1} =
\JJom{\a_n} \circ \JJom\a_n$, so that
\[
\JJom\a_n = \JJom{\a_{n-1}} \circ \JJom{\a_{n-2}} \circ \cdots \circ \JJom{\a_0}.
\]

\begin{definition}\label{def: Ja Z}
Given the computable ordinal $\a$ we define the \emph{$\om^\a$-Jump
operator} $\JJom\a\colon \Bai\to\Bai$ and its inverse $\KKom\a$ by
\[
\JJom\a(Z)(n) = \JJom{\a}_{n+1}(Z)(0) \quad \text{ and } \quad
\KKom\a(Y) =  \bigcup_n \Kom\a(Y\upto n).
\]
\end{definition}

We first show that $\KKom\a$ is indeed the inverse of $\JJom\a$.

\begin{lemma}\label{lemma: Y equal Koma Z}
If $Y=\JJom\a(Z)$ then $Z=\KKom\a(Y)$.
\end{lemma}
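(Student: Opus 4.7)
The plan is to proceed by transfinite induction on $\a$, with base case $\a=0$ handled directly from the definitions of Section \ref{sect:exp}. In that case, $\JJom 0 = \J$ has the explicit form $\J(Z) = \la Z\upto t_0, Z\upto t_1,\dots\ra$, so $K(\J(Z)\upto n) = Z\upto t_{n-1}$ for $n \geq 1$; since $t_{n-1} \to \infty$, the union $\bigcup_n K(\J(Z)\upto n)$ is $Z$, as required.

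For the inductive step with $\a > 0$, I would unpack the definitions: writing $Y = \JJom\a(Z)$, for $n \geq 1$ we have $\Kom\a(Y\upto n) = \Kom\a_n(\la Y(n-1)\ra) = (\Kom{\a_0}\circ\cdots\circ\Kom{\a_{n-1}})(\la Y(n-1)\ra)$, while $Y(n-1) = \JJom\a_n(Z)(0) = \JJom{\a_{n-1}}(\JJom\a_{n-1}(Z))(0)$. The idea is to peel off the applications of $\Kom{\a_i}$ one by one, from the innermost ($i=n-1$) outward to $i=0$. At the first step, $\la Y(n-1)\ra$ is a length-one initial segment of $\JJom{\a_{n-1}}(\JJom\a_{n-1}(Z))$; the inductive hypothesis, applied to $\b = \a_{n-1}$ and the real $W := \JJom\a_{n-1}(Z)$, tells us that $\Kom{\a_{n-1}}(\la Y(n-1)\ra)$ is an initial segment of $W = \JJom{\a_{n-2}}(\JJom\a_{n-2}(Z))$. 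Iterating this argument for $\a_{n-2}, \a_{n-3}, \dots, \a_0$, we conclude $\Kom\a_n(\la Y(n-1)\ra) \subset \JJom\a_0(Z) = Z$. The length bound $|\Kom\a_n(\la Y(n-1)\ra)| > n$ is immediate from (\ref{Pa5}) of Lemma \ref{lemma:J om al K}, since each $\Kom{\a_i}$ strictly lengthens nonempty strings and we apply $n$ such functions to $\la Y(n-1)\ra$, which has length $1$. Therefore $\KKom\a(Y) = \bigcup_n \Kom\a_n(\la Y(n-1)\ra)$ is an increasing union of initial segments of $Z$ of unbounded length, and hence equals $Z$.

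The main obstacle will be the bookkeeping required to verify, at each peeling step, that the string to which the next $\Kom{\a_i}$ is applied actually lies in $\Jom{\a_i}(\Seq)$, so that the function is defined. This information is implicit in the inductive hypothesis for $\a_i$, since the statement $\KKom{\a_i}(\JJom{\a_i}(W)) = W$ already presupposes that $\Kom{\a_i}$ makes sense on every initial segment of $\JJom{\a_i}(W)$; making this precise in the write-up may require slightly strengthening the inductive statement, or invoking a short auxiliary observation that every initial segment of $\JJom\b(W)$ belongs to $\Jom\b(\Seq)$, parallel to Lemma \ref{lemma:Jom approx Jom}.
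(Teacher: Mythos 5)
Your proof is correct and follows essentially the same route as the paper's: transfinite induction on $\a$, unwinding $\Kom\a(Y\upto n)=\Kom\a_n(\la Y(n-1)\ra)$ and peeling off the maps $\Kom{\a_{n-1}},\dots,\Kom{\a_0}$ via the inductive hypothesis to land inside $Z$, then using (\ref{Pa5}) to see the lengths are unbounded. The domain bookkeeping you flag at the end is real but is exactly the (implicitly strengthened) form of the inductive hypothesis the paper also relies on, so there is no substantive difference.
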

\begin{proof}
The proof of the lemma is by transfinite induction. Let
$\set{\a_i}{i\in \N}$ be the fixed canonical sequence fo $\a$. Recall
from the definition of $\Kom\a$ that $\Kom\a(Y\upto n)  = \Kom\a_n(\la
Y(n-1)\ra)$. Since $\la Y(n-1)\ra=\la \JJom{\a}_{n}(Z)(0)\ra \subseteq
\JJom{\a}_{n}(Z)$, by the induction hypothesis applied to
$\a_{n-1},\dots,\a_0$, we get that $\Kom\a_n(\la Y(n-1)\ra) \subseteq
Z$. So $Z \supseteq \KKom\a(Y)$. By (\ref{Pa5}) applied to
$\a_0,\dots,\a_{n-1}$ we get that $|\Kom\a_n(\la Y(n-1)\ra)|> n+1$ and
hence $Z= \KKom\a(Y)$.
\end{proof}

\begin{lemma}\label{lemma: Ja Z vs Z to the om alpha}
For every $Z\in \Bai$ and computable ordinal $\a$, $\JJom\a(Z)\teq Z^{(\om^\a)}$.
\end{lemma}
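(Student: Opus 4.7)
The plan is transfinite induction on $\a$, with the inductive hypothesis strengthened so that the equivalence $\JJom\b(W) \teq W^{(\om^\b)}$ is witnessed by Turing reductions uniform in $W$ (and in the presentation of $\b$). The base case $\a=0$ is immediate since $\JJom{0}(Z) = \J(Z) \teq Z' = Z^{(\om^0)}$. For $\a>0$, let $\{\a_i\}$ be the fixed canonical sequence with $\sup_i(\a_i+1) = \a$ and $\sum_i \om^{\a_i} = \om^\a$. Applying the inductive hypothesis $n$ times to the recursive definition $\JJom\a_{n+1} = \JJom{\a_n} \circ \JJom\a_n$ gives, uniformly in $n$, that $\JJom\a_n(Z) \teq Z^{(\om^{\a_0}+\cdots+\om^{\a_{n-1}})}$, since iterating $\JJom{\a_i}$-jumps corresponds on exponents to ordinal addition.

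The forward inequality $\JJom\a(Z) \leq_T Z^{(\om^\a)}$ is then easy: since $\JJom\a(Z)(n) = \JJom\a_{n+1}(Z)(0)$ and the ordinal $\om^{\a_0}+\cdots+\om^{\a_n}$ is strictly below $\om^\a$, the oracle $Z^{(\om^\a)}$ uniformly computes $Z^{(\om^{\a_0}+\cdots+\om^{\a_n})}$ and hence the first bit of $\JJom\a_{n+1}(Z)$, uniformly in $n$.

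For the converse $Z^{(\om^\a)} \leq_T \JJom\a(Z)$, note that $Z^{(\om^\a)} \teq \bigoplus_n Z^{(\om^{\a_0}+\cdots+\om^{\a_{n-1}})} \teq \bigoplus_n \JJom\a_n(Z)$, so it suffices to compute each $\JJom\a_n(Z)$ from $Y = \JJom\a(Z)$ uniformly in $n$. Set $\si_k = \Kom\a(Y\upto k)$; by Lemma \ref{lemma: Y equal Koma Z} the $\si_k$ form an ascending chain of initial segments of $Z$ with $\bigcup_k \si_k = Z$, and since $\Jom\a(\si_k) = Y\upto k \subset Y\upto(k+1) = \Jom\a(\si_{k+1})$ by (\ref{Pa3}) and (\ref{Pa6}), property (\ref{Pa7}) gives $\Jom\a_n(\si_k) \subseteq \Jom\a_n(\si_{k+1})$ for every $n$. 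The main obstacle is to establish $\bigcup_k \Jom\a_n(\si_k) = \JJom\a_n(Z)$; I would handle it by further strengthening the induction hypothesis to include an approximation lemma in the spirit of Lemma \ref{lemma:Jom approx Jom}, namely that $Y = \JJom\b(W)$ iff for every $j$ there is $\tau \subset W$ with $|\tau|>j$ and $Y\upto j = \Jom\b(\tau)$. With this in force for each $\a_i<\a$, one chains approximations through the composition $\Jom\a_n = \Jom{\a_{n-1}} \circ \cdots \circ \Jom{\a_0}$, using (\ref{Pa5}) to track that the intermediate string lengths still tend to infinity: $\Jom{\a_0}(\si_k)$ becomes an arbitrarily long initial segment of $\JJom{\a_0}(Z)$, hence $\Jom{\a_1}(\Jom{\a_0}(\si_k))$ becomes an arbitrarily long initial segment of $\JJom{\a_1}(\JJom{\a_0}(Z))$, and so on up to $\Jom\a_n$. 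The very same construction, applied at the outer level, produces the approximation lemma for $\JJom\a$, feeding the strengthened induction hypothesis at level $\a$.
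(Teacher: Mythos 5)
Your proposal is correct and follows essentially the same route as the paper: transfinite induction with uniformity to get $\JJom\a_n(Z)\teq Z^{(\om^{\a_0}+\cdots+\om^{\a_{n-1}})}$, the forward reduction from $\b_n<\om^\a$, and the backward reduction by recovering each $\JJom\a_n(Z)$ from $Y=\JJom\a(Z)$ via the $\Kom{\a_i}$'s. Your recovery $\bigcup_k\Jom\a_n(\Kom\a(Y\upto k))$ is literally the paper's formula $\bigcup_{k>n}\Kom{\a_n}\circ\cdots\circ\Kom{\a_{k-1}}(\la Y(k-1)\ra)$ after telescoping via (\ref{Pa3}), and the approximation lemma you propose folding into the induction is exactly the paper's Lemma \ref{lemma:Jomal approx Jomal}, proved by the same chain argument.
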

\begin{proof}
This is again proved by transfinite induction. Assuming that
$\JJom{\a_i}(Z)\teq Z^{(\om^{\a_i})}$ for every $i$, and uniformly in
$i$, we immediately obtain $\JJom\a_n(Z) \teq Z^{(\b_n)}$, where $\b_n
= \sum_{i=0}^{n-1} \om^{\a_i}$, for every $n$. Since $\beta_n<\om^\a$,
$\JJom\a(Z)\leqt Z^{(\om^\a)}$ is immediate.

For the other reduction we need to uniformly compute $\JJom\a_n(Z)$
from  $\JJom\a(Z)$. The same way we compute $Z$ from $\JJom\a(Z)$
applying $\KKom\a$, we can compute $\JJom\a_n(Z)$ by forgetting about
$\a_0,\dots,\a_{n-1}$. In other words, by the same proof as Lemma \ref
{lemma: Y equal Koma Z} we can show that for every $m$
\[
\JJom\a_m(Z)= \bigcup_{n> m} \Kom{\a_{m}}(\Kom{\a_{m+1}}(\dots(\Kom{\a_{n-1}}(\la Y(n-1)\ra))\dots))
\]
using $\Kom{\a_{m}} \circ \Kom{\a_{m+1}} \circ\cdots\circ \Kom{\a_{n-1}}$ instead of $\Kom\a_n$.
\end{proof}

We can now prove that $\Jom\a$ approximates $\JJom\a$, extending Lemma
\ref{lemma:Jom approx Jom}.

%
%

\begin{lemma}\label{lemma:Jomal approx Jomal}
Given $Y,Z\in \Bai$, the following are equivalent:
\begin{enumerate}
\item $Y = \JJom\a(Z)$;
\item for every $n$ there exists $\si_n\subset Z$ with $|\si_n|>n$ such
    that $Y\upto n =\Jom\a(\si_n)$.
\end{enumerate}\end{lemma}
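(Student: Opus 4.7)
The plan is to prove the lemma by transfinite induction on $\a$. The base case $\a=0$ is precisely Lemma \ref{lemma:J approx J}, after identifying $\Jom{0}$ with $J$ and $\JJom{0}$ with $\J$. For the inductive step, I fix $\a>0$, let $\{\a_i\}_{i\in\N}$ be its canonical sequence, and assume the lemma at each $\a_i$.

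The direction (1) $\Rightarrow$ (2) I would handle first and directly. For $n=0$ set $\si_0=Z\upto 1$, which works by (\ref{Pa1}); for $n>0$ set $\si_n=\Kom\a(Y\upto n)=\Kom\a_n(\la Y(n-1)\ra)$. The same iterative argument as in the proof of Lemma \ref{lemma: Y equal Koma Z}, peeling off $\Kom{\a_{n-1}},\dots,\Kom{\a_0}$ one at a time and invoking (\ref{Pa2}), shows $\si_n\subset Z$; iterating (\ref{Pa5}) gives $|\si_n|>n$; and (\ref{Pa3}) gives $\Jom\a(\si_n)=Y\upto n$.

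The direction (2) $\Rightarrow$ (1) is the heart of the argument. I will derive it from a secondary finite induction on $m\in\N$ establishing the claim
\[
W_m:=\bigcup_n \Jom\a_m(\si_n)=\JJom\a_m(Z).
\]
The sequence $\bigl(|\Jom\a_k(\si_n)|\bigr)_k$ is strictly decreasing with $|\Jom\a_n(\si_n)|=1$, so $|\Jom\a_m(\si_n)|\geq n-m+1$ for $n\geq m$ and each $W_m$ is a genuine real; property (\ref{Pa7}), applied to $Y\upto n\subseteq Y\upto(n+1)$, shows the unions are increasing. The case $m=0$ is immediate, since the $\si_n$ are comparable prefixes of $Z$ of lengths tending to infinity. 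For the step $m\to m+1$, set $\rho_n=\Jom\a_m(\si_n)$ and apply the outer inductive hypothesis at $\a_m$ with $W_m$ playing the role of $Z$ and $W_{m+1}$ the role of $Y$: for $n'>0$ take $\tilde\si_{n'}=\Kom{\a_m}(W_{m+1}\upto n')$; for $n$ so large that $W_{m+1}\upto n'\subseteq \Jom{\a_m}(\rho_n)$, property (\ref{Pa6}) for $\a_m$ places $W_{m+1}\upto n'$ in $\Jom{\a_m}(\Seq)$ and gives $\tilde\si_{n'}\subset \Kom{\a_m}(\Jom{\a_m}(\rho_n))=\rho_n\subset W_m$, while (\ref{Pa3}) and (\ref{Pa5}) for $\a_m$ deliver $\Jom{\a_m}(\tilde\si_{n'})=W_{m+1}\upto n'$ and $|\tilde\si_{n'}|>n'$. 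The inductive hypothesis therefore yields $W_{m+1}=\JJom{\a_m}(W_m)=\JJom\a_{m+1}(Z)$.

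With the claim in hand, the conclusion is immediate: for every $k$ and every $n>k$ one has $Y(k)=\Jom\a(\si_n)(k)=\Jom\a_{k+1}(\si_n)(0)$, and the claim at level $k+1$ turns this into $Y(k)=W_{k+1}(0)=\JJom\a_{k+1}(Z)(0)=\JJom\a(Z)(k)$, so $Y=\JJom\a(Z)$. The main obstacle will be orchestrating the interleaving of the transfinite induction on $\a$ with the finite induction on $m$, and in particular verifying that the canonical witnesses $\tilde\si_{n'}$ land inside $W_m$ rather than merely in $\Seq$; this is exactly the point where (\ref{Pa6}) for $\a_m$ must be invoked in combination with the already-established inductive claim at level $m$.
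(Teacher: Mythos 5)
Your proposal is correct and follows essentially the same route as the paper: the same witnesses $\si_n=\Kom\a(Y\upto n)$ for (1)$\Rightarrow$(2), and for (2)$\Rightarrow$(1) the same use of (\ref{Pa7}) to form the increasing unions $\bigcup_n\Jom\a_m(\si_n)$ and identify them with $\JJom\a_m(Z)$ via the transfinite induction hypothesis. Your explicit secondary induction on $m$, with $W_m$ playing the role of $Z$ and the witnesses $\tilde\si_{n'}$ checked via (\ref{Pa6}), (\ref{Pa3}), (\ref{Pa5}), is just a fuller spelling-out of the step the paper compresses into ``by the induction hypothesis, $\bigcup_n \Jom\a_m(\si_n) = \JJom\a_m(Z)$''.
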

\begin{proof}
We first prove (1) $\implies$ (2). When $n=0$ let $\si_0=Z\upto1$, which
works by (\ref{Pa1}).
Let $\si_n=\Kom\a(Y \upto n)$.
Then $\si_n \subseteq \KKom\a(Y)= Z$, and $Y\upto n =\Jom\a(\si_n)$.
We get that $|\si_n|>n$ by applying (\ref{Pa5}) $n$ times to $\si_n=\Kom\a_n(\la Y(n-1)\ra)$.

The proof of (2) $\implies$ (1) is similar to the proof of Lemma
\ref{lemma:Jom approx Jom} but uses transfinite induction. By
(\ref{Pa7}), for all $m$ and $n$, $\Jom\a_m(\si_n)\subseteq
\Jom\a_m(\si_{n+1})$, and hence we can consider $\bigcup_n
\Jom\a_m(\si_n) \in \Bai$. Then, by the induction hypothesis,
$\bigcup_n \Jom\a_m(\si_n) = \JJom\a_m(Z)$, and hence
$\Jom\a_m(\si_n)(0) =\JJom\a_m(Z)(0)$ for all $m<n$. It follows that
for every $m$ and $n>m$
\[
\JJom\a(Z)(m)=\JJom\a_{m+1}(Z)(0)=\Jom\a_{m+1}(\si_{n})(0) = \Jom\a(\si_{n})(m)= Y(m).\qedhere
\]
\end{proof}

We are now able to show the intended connection between the
$\om^\a$-Jump Tree and the $\om^\a$-Jump operator.

\begin{lemma}\label{lemma:JTomal paths}
For every tree $T$, $[\JTom\a(T)] = \set{\JJom\a(Z)}{Z \in [T]}$.
\end{lemma}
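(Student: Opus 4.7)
The plan is to imitate the proofs of Lemmas \ref{lemma:JT paths} and \ref{lemma:JTom paths}, with the $\om^\a$-analogue Lemma \ref{lemma:Jomal approx Jomal} replacing the $\om$-analogue Lemma \ref{lemma:Jom approx Jom}, and the properties (\ref{Pa1})--(\ref{Pa6}) playing the role of the properties (\ref{Pom1})--(\ref{Pom6}).

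For the inclusion $\set{\JJom\a(Z)}{Z \in [T]} \subseteq [\JTom\a(T)]$, fix $Z \in [T]$. By Lemma \ref{lemma:Jomal approx Jomal}, for every $n$ there is some $\si_n \subset Z$ with $\JJom\a(Z) \upto n = \Jom\a(\si_n)$. Since $\si_n \in T$, this already places $\JJom\a(Z) \upto n$ in $\JTom\a(T)$, so $\JJom\a(Z) \in [\JTom\a(T)]$.

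For the reverse inclusion, fix $Y \in [\JTom\a(T)]$. For each $n$ set $\si_n = \Kom\a(Y \upto n)$. Three things need to be checked: (i) $\si_n \in T$, (ii) the $\si_n$ form an ascending chain and (iii) $|\si_n| > n$, so that $Z = \bigcup_n \si_n$ is a well-defined element of $\Bai$ which lies in $[T]$. For (i), the case $n=0$ is trivial since $\si_0=\es\in T$, while for $n\geq 1$ there exists $\tau\in T$ with $|\tau|\geq 2$ and $Y\upto n = \Jom\a(\tau)$, and then (\ref{Pa2}) gives $\si_n=\Kom\a(\Jom\a(\tau))=\tau\in T$. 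For (ii), applying (\ref{Pa6}) to $Y \upto n \subset Y \upto (n+1)$ yields $\si_n \subset \si_{n+1}$. For (iii), iterating (\ref{Pa5}) gives $|\si_n|>n$. With $Z\in[T]$ in hand, (\ref{Pa3}) shows that $Y\upto n = \Jom\a(\si_n)$ witnesses condition (2) of Lemma \ref{lemma:Jomal approx Jomal} with the sequence $\si_n\subset Z$, so $Y = \JJom\a(Z)$.

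I do not anticipate any real obstacle: every ingredient was set up in Lemma \ref{lemma:J om al K} precisely so that the argument for $\a=1$ given in Lemma \ref{lemma:JTom paths} would transfer word for word. The only point that deserves a moment's care is verifying $\si_n\in T$ at the base case, and this is handled by separating $n=0$ from $n\geq 1$ as above.
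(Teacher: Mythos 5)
Your proof is correct and follows essentially the same route as the paper: the forward inclusion via Lemma \ref{lemma:Jomal approx Jomal} (mirroring Lemma \ref{lemma:JT paths}), and the reverse inclusion by setting $\si_n=\Kom\a(Y\upto n)$, checking via (\ref{Pa2}), (\ref{Pa5}), (\ref{Pa6}) that these form a suitable chain in $T$, and invoking Lemma \ref{lemma:Jomal approx Jomal} again to identify $Y$ with $\JJom\a(\bigcup_n\si_n)$, exactly as in Lemma \ref{lemma:JTom paths}. The details you supply (e.g.\ the case split at $n=0$) are the ones the paper leaves implicit by referring back to the $\a=1$ case.
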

\begin{proof}
To prove $\set{\JJom\a(Z)}{Z \in [T]} \subseteq [\JTom\a(T)]$ we can
argue as in the proof of Lemma \ref{lemma:JT paths}, using Lemma
\ref{lemma:Jomal approx Jomal} in place of Lemma \ref{lemma:J approx
J}.

To prove the other inclusion, fix $Y \in [\JTom\a(T)]$. Arguing as in
the proof of Lemma \ref{lemma:JTom paths}, we first let
$\si_n=\Kom\a(Y\upto n)\in T$. Let $Z=\KKom\a(Y)=\bigcup_{n\in \N}\si_n
\in [T]$. We get that $Y=\JJom\a(Z)$ from Lemma \ref{lemma:Jomal approx
Jomal}.
\end{proof}

\subsection{Jumps versus Veblen}\label{ssect:JvsV}
First, we need to iterate the Jump Tree operator along a finite string.

\begin{definition}
If $T$ is a tree and $\tau \in \JTom{\a}(T)$  we define
\[
\JTom{\a}_\tau(T) = \set{\Jom\a_{|\tau|+1}(\si)}
{\si\in T \land \tau\subseteq\Jom\a(\si)}.
\]
\end{definition}

\begin{lemma}\label{lemma: JT tau gen recursive}
For $\tau\in \JTom{\a}(T)$,
\begin{align*}
\JTom{\a}_\es(T)   &=    \JTom{\a_0}(T)  \\
\JTom{\a}_{\tau\conc\la c\ra}  (T)  &=   \JTom{\a_{|\tau|+1}}(\JTom{\a}_\tau(T)_{\la c\ra}).
\end{align*}
($T_{\la c\ra}$ was defined in \ref{def:Tsigma}.)
\end{lemma}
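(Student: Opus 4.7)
The plan is to prove both identities by straightforward unwinding of the definitions of $\JTom{\a}_\tau(T)$, $\Jom\a_n$ and $\Jom\a$, with the main point being the correspondence $\rho = \Jom\a_{|\tau|+1}(\si)$.

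First I would dispose of the case $\tau = \es$. Here $|\tau|+1 = 1$ and from the recursion $\Jom\a_{n+1} = \Jom{\a_n}\circ\Jom\a_n$ with $\Jom\a_0 = \mathrm{id}$ we get $\Jom\a_1 = \Jom{\a_0}$. The constraint $\tau\subseteq\Jom\a(\si)$ is vacuous, so
\[
\JTom{\a}_\es(T)=\set{\Jom\a_1(\si)}{\si\in T}=\set{\Jom{\a_0}(\si)}{\si\in T}=\JTom{\a_0}(T).
\]

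Next, for the inductive identity, the key combinatorial observation I would use is that, by the very definition of $\Jom\a$, for every $\si$ with $|\Jom\a(\si)|>|\tau|$ one has $\Jom\a(\si)(|\tau|)=\Jom\a_{|\tau|+1}(\si)(0)$. Therefore $\tau\conc\la c\ra\subseteq\Jom\a(\si)$ is equivalent to $\tau\subseteq\Jom\a(\si)$ together with $\Jom\a_{|\tau|+1}(\si)(0)=c$, i.e.\ $\la c\ra\subseteq\Jom\a_{|\tau|+1}(\si)$. Combining this with $\Jom\a_{|\tau|+2}=\Jom{\a_{|\tau|+1}}\circ\Jom\a_{|\tau|+1}$, I would rewrite
\[
\JTom{\a}_{\tau\conc\la c\ra}(T)=\SET{\Jom{\a_{|\tau|+1}}\bigl(\Jom\a_{|\tau|+1}(\si)\bigr)}{\si\in T,\ \tau\subseteq\Jom\a(\si),\ \la c\ra\subseteq\Jom\a_{|\tau|+1}(\si)}.
\]
Substituting $\rho=\Jom\a_{|\tau|+1}(\si)$ identifies the set in braces with $\set{\rho\in\JTom{\a}_\tau(T)}{\la c\ra\subseteq\rho}$, and the right-hand side is precisely $\JTom{\a_{|\tau|+1}}(\JTom{\a}_\tau(T)_{\la c\ra})$.

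The one subtlety I anticipate, and the only place a little care is needed, is the $\es$ clause built into Definition \ref{def:Tsigma}: $\JTom{\a}_\tau(T)_{\la c\ra}$ includes $\es$ whenever $\es\in\JTom{\a}_\tau(T)$, and $\Jom{\a_{|\tau|+1}}(\es)=\es$ by (\ref{Pa1}), so $\es$ is always contributed to the right-hand side. To match this on the left, I would check that $\es\in\JTom{\a}_{\tau\conc\la c\ra}(T)$ whenever $\es\in\JTom{\a}_\tau(T)$ and some witness exists with $\la c\ra\subseteq\Jom\a_{|\tau|+1}(\si)$, which amounts to picking a $\si\in T$ with $\Jom\a(\si)=\tau\conc\la c\ra$ (so that $\Jom\a_{|\tau|+2}(\si)=\es$ by (\ref{Pa1}) applied to $\a_{|\tau|+1}$ and the fact that $|\Jom\a_{|\tau|+1}(\si)|=1$). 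This edge case is the main, though still minor, obstacle; everything else is bookkeeping with the already-established identity $\Jom\a(\si)(i)=\Jom\a_{i+1}(\si)(0)$.
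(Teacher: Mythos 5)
Your proof is correct and is exactly the ``straightforward induction on $|\tau|$'' that the paper offers as its entire proof: the base case via $\Jom\a_1=\Jom{\a_0}$, the identity $\Jom\a(\si)(|\tau|)=\Jom\a_{|\tau|+1}(\si)(0)$ to split the condition $\tau\conc\la c\ra\subseteq\Jom\a(\si)$, and the substitution $\rho=\Jom\a_{|\tau|+1}(\si)$ together with $\Jom\a_{|\tau|+2}=\Jom{\a_{|\tau|+1}}\circ\Jom\a_{|\tau|+1}$. Your attention to the $\es$ clause of Definition \ref{def:Tsigma} is well placed and resolves correctly, since the hypothesis that $\tau\conc\la c\ra$ lies in $\JTom\a(T)$ (needed for the left-hand side to be defined at all) supplies a witness $\si$ with $\Jom\a(\si)=\tau\conc\la c\ra$ and hence $\Jom\a_{|\tau|+2}(\si)=\es$.
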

\begin{proof}
Straightforward induction on $|\tau|$.
\end{proof}

\begin{lemma}
Given a tree $T \subseteq \Seq$, $\tau\in \Seq$, and $c\in\N$
\[
\tau\conc \la c\ra  \in \JTom{\a}(T)       \iff      \la c \ra \in \JTom{\a}_{\tau}(T).
\]
\end{lemma}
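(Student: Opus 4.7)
The plan is to verify both directions by unwinding the definitions of $\JTom{\a}(T)$ and $\JTom{\a}_\tau(T)$, together with the bookkeeping between the length of $\Jom\a(\si)$ and the index at which the sequence $\Jom\a_n(\si)$ first becomes empty. This is the exact analogue of the earlier lemma for $\JT^\om$ (the $\a=1$ case), and the argument is expected to be essentially routine. Recall that if $n$ is least with $\Jom\a_n(\si)=\es$, then $|\Jom\a(\si)|=n-1$ and, by (\ref{Pa1}) applied to $\a_{n-1}$, $|\Jom\a_{n-1}(\si)|=1$, so $\Jom\a_{n-1}(\si)=\la\Jom\a(\si)(n-2)\ra$.

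For the forward direction, I would take $\si\in T$ witnessing $\tau\conc\la c\ra\in\JTom\a(T)$, i.e.\ $\Jom\a(\si)=\tau\conc\la c\ra$. Then the computation above gives $n-1=|\tau|+1$, whence $\Jom\a_{|\tau|+1}(\si)=\la c\ra$; combined with $\tau\subseteq\Jom\a(\si)$, this shows $\la c\ra\in\JTom\a_\tau(T)$.

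For the reverse direction, suppose $\si\in T$ witnesses $\la c\ra\in\JTom\a_\tau(T)$, i.e.\ $\tau\subseteq\Jom\a(\si)$ and $\Jom\a_{|\tau|+1}(\si)=\la c\ra$. Since $|\Jom\a_{|\tau|+1}(\si)|=1$, applying (\ref{Pa1}) to $\a_{|\tau|}$ gives $\Jom\a_{|\tau|+2}(\si)=\es$, and $|\tau|+2$ is the first such index (any smaller one would make $\Jom\a_{|\tau|+1}(\si)$ empty as well). Hence $|\Jom\a(\si)|=|\tau|+1$ and $\Jom\a(\si)(|\tau|)=\Jom\a_{|\tau|+1}(\si)(0)=c$; together with $\tau\subseteq\Jom\a(\si)$ this forces $\Jom\a(\si)=\tau\conc\la c\ra$, so $\tau\conc\la c\ra\in\JTom\a(T)$.

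There is no real obstacle here: the only point that requires a moment of care is correctly identifying the index $n$ at which $\Jom\a_n(\si)$ becomes empty in terms of $|\tau|$, which is handled uniformly by (\ref{Pa1}) applied to the appropriate $\a_i$.
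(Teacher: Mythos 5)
Your proof is correct and is just a fully unwound version of what the paper does (the paper simply says the lemma ``follows from the definitions''). One immaterial slip: in the reverse direction the relevant instance of (\ref{Pa1}) is for $\a_{|\tau|+1}$, not $\a_{|\tau|}$, since $\Jom\a_{|\tau|+2}(\si)=\Jom{\a_{|\tau|+1}}(\Jom\a_{|\tau|+1}(\si))$.
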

\begin{proof}
Follows from the definitions of $\JTom{\a}(T)$ and $\JTom{\a}_{\tau}(T)$.
\end{proof}

We now generalize the construction of Definition \ref{def:fgtau}, by
defining an operator that converts a function with domain
$\JTom{\a}(T)$ and values in $\X$ into a function with domain $T$ and
values in $\phiop(\a,\X)$. We will show in Lemma \ref{lemma:halphag
monotone} that this operator preserves monotonicity.

\begin{definition}\label{def:halphag}
By transfinite recursion, we build, for each computable ordinal $\a$,
an operator $\hom{\a}$ such that given a linear ordering $\X$ and a
function
\[
g\colon \JTom{\a}(T) \to \X,
\]
it returns
\[
\hom{\a}_g\colon T\to \phiop(\a,\X).
\]

For $\a=0$, we let $\hom{\a} = h$ of Definition \ref{def:hgJ}. For
$\a>0$ we first define simultaneously for each $\tau \in \JTom{\a}(T)$
a function
\[
f_\tau \colon \JTom{\a}_\tau (T) \to \phiop(\a, \X)
\]
by recursion on $|\si|$:
\[
f_\tau (\si) =
\begin{cases}
\varphi_{\a, g(\tau)} & \text{if $\si = \es$;}\\
\\
\hom{\a_{n}}_{f_{\tau'}} (\si) & \text{if $\si \neq \es$, where $\tau'= \tau \conc \la \si(0) \ra$ and $n=|\tau'|$.}
\end{cases}
\]

We then define
\[
\hom{\a}_g = \hom{\a_0}_{f_\es}\colon T \to \phiop(\a,\X).
\]
\end{definition}

\begin{lemma}\label{lemma:halphag monotone}
If $g\colon \JTom{\a} (T) \to \X$ is total and $(\supset,
<_\X)$-monotone, then $\hom{\a}_g\colon T\to \phiop(\a,\X)$ is also
total and $(\supset, <_{\phiop(\a,\X)})$-monotone. Moreover,
$\hom{\a}_g$ is computable in $g$.
\end{lemma}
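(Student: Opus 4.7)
The plan is to proceed by transfinite induction on $\a$. The base case $\a=0$ is Lemma \ref{lemma:monotone} applied to $\hom{0}=h$. For the inductive step I would use the strong induction hypothesis: for every computable $\b<\a$ and every linear ordering $\Y$, $\hom\b$ carries $(\supset,<_\Y)$-monotone functions $\JTom\b(T')\to\Y$ to $(\supset,<_\varphi)$-monotone functions $T'\to\phiop(\b,\Y)$, computably in the input.

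Before attacking monotonicity I would verify totality and the asserted range. A simultaneous induction on $|\si|$ over all $\tau\in\JTom\a(T)$ shows that $f_\tau(\si)$ is defined and lies in $\phiop(\a,\X)$: the base $\si=\es$ is explicit, and for $\si\neq\es$ one sets $\tau'=\tau\conc\la\si(0)\ra$, $n=|\tau'|$, and invokes the outer induction hypothesis on $\hom{\a_n}$ applied to the function $f_{\tau'}$ already defined by the inner induction (the recursion for $h$ only consults $f_{\tau'}$ on strings of length $<|\si|$, by (\ref{P5})). This produces a term of $\phiop(\a_n,\phiop(\a,\X))$; since $\a_n<\a$, such a term is already a legal term of $\phiop(\a,\X)$.

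The substantive step is monotonicity of $f_\tau$, argued simultaneously for all $\tau$ by induction on $|\si|$. Given $\si'\subsetneq\si$ in $\JTom\a_\tau(T)$, I would split into two cases. If $\si'\neq\es$ then $\si(0)=\si'(0)$, so $\tau'=\tau\conc\la\si(0)\ra$ is common to both, and $f_\tau(\si)=\hom{\a_n}_{f_{\tau'}}(\si)$, $f_\tau(\si')=\hom{\a_n}_{f_{\tau'}}(\si')$. Following the strategy of Lemma \ref{lemma:om monotone}, the monotonicity step for $\hom{\a_n}_{f_{\tau'}}$ on the pair $(\si',\si)$ only uses monotonicity of $f_{\tau'}$ on strings of length at most $|J(\si)|<|\si|$, which is supplied by the inner induction. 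If $\si'=\es$, the required inequality is $f_\tau(\si)<_\varphi\varphi_{\a,g(\tau)}$. Monotonicity of $g$ combined with $\tau\subsetneq\tau'$ gives $g(\tau')<_\X g(\tau)$, so $f_{\tau'}(\rho)\leq_\varphi\varphi_{\a,g(\tau')}<_\varphi\varphi_{\a,g(\tau)}$ for every $\rho\in\JTom\a_{\tau'}(T)$. Since $\a_n<\a$, $\varphi_{\a,g(\tau)}$ is a fixed point of $\varphi_{\a_n}$ in $\phiop(\a,\X)$, and any term assembled by $+$ and $\varphi_{\a_n}$ from elements $<_\varphi\varphi_{\a,g(\tau)}$ remains $<_\varphi\varphi_{\a,g(\tau)}$; this traps $\hom{\a_n}_{f_{\tau'}}(\si)$ strictly below $\varphi_{\a,g(\tau)}$.

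Once $f_\es$ is known to be monotone, one last application of the outer induction hypothesis at $\a_0$ with $\Y=\phiop(\a,\X)$ delivers the monotonicity of $\hom\a_g=\hom{\a_0}_{f_\es}$. Computability of $\hom\a_g$ in $g$ follows step-by-step from the computability of all $\hom\b_g$ for $\b<\a$ together with the computable recursion defining the $f_\tau$, which terminates by (\ref{Pa5}). The step I expect to require the most care is the fixed-point bound in the $\si'=\es$ case: formally verifying, by a side recursion on terms in $\phiop(\a_n,\phiop(\a,\X))$ that uses the normal form rules of Definition \ref{def:phiop} (in particular $\varphi_{\a_n}(\varphi_{\a,g(\tau)})=\varphi_{\a,g(\tau)}$, together with the fact that $\varphi_\a$-constants are additively principal), that the entire output of $\hom{\a_n}_{f_{\tau'}}$ is strictly below the constant $\varphi_{\a,g(\tau)}$.
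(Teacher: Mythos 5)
Your architecture is the paper's: transfinite induction on $\a$, a simultaneous recursion over all $\tau$ and $|\si|$ for totality and monotonicity of the $f_\tau$, and the fixed-point/closure argument to trap $\hom{\a_n}_{f_{\tau'}}(\si)$ strictly below $\varphi_{\a,g(\tau)}$ in the $\si'=\es$ case. The one place where your write-up does not quite close is the form of the outer induction hypothesis. You state it for \emph{total} monotone functions $g\colon \JTom\b(T')\to\Y$, but at the moment you invoke it inside the inner recursion, $f_{\tau'}$ is only a partial function: it has been defined and shown monotone only on strings of length $<|\si|$. You justify the application by asserting that $\hom{\a_n}_{f_{\tau'}}$ evaluated at $\si$ consults $f_{\tau'}$ only on strings of length at most $|J(\si)|<|\si|$. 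For $\a_n=0$ that is exactly (\ref{P5}) and is fine, but for $\a_n>0$ the operator $\hom{\a_n}$ is itself an entire nested recursion through its own family of auxiliary functions, and the claim that it consults its input only on strings of length $<|\si|$ is not a consequence of (\ref{P5}) or (\ref{Pa5}) applied once --- it is itself a statement about $\hom{\a_n}$ that must be carried along in the transfinite induction. As stated, your induction hypothesis is too weak to license the step, and the quantity $|J(\si)|$ is not the relevant one at levels $\a_n>0$.

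The repair is exactly what the paper does: strengthen the statement being proved by transfinite induction to a quantitative one. The paper calls a partial function $(n,T,\X)$-good if it is defined, $\X$-valued, and $(\supset,<_\X)$-monotone on all strings of length $\leq n$, and proves by transfinite induction on $\a$ that an $(n,\JTom\a(T),\X)$-good input $g$ yields an $(n{+}1,T,\phiop(\a,\X))$-good output $\hom\a_g$. With that hypothesis the application to the partially defined $f_{\tau'}$ is legitimate (one runs a subsidiary induction on $m\leq n$ showing $f_\tau$ is $(m,\JTom\a_\tau(T),\phiop(\a,\X))$-good for $|\tau|=n-m$), and the lemma as stated follows by letting $n\to\infty$. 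Everything else in your proposal --- the totality/range argument via $\phiop(\a_n,\phiop(\a,\X))=\phiop(\a,\X)$, the two-case monotonicity split, and the fixed-point bound --- matches the paper's proof once this strengthening is in place.
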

\begin{proof}
We say that a partial function $e$ on a tree $T$ is {\em
$(n,T,\X)$-good} if $e$ is defined on all strings of length less than
or equal to $n$, it takes values in $\X$, and is
$(\supset,<_\X)$-monotone on strings of length less than or equal to
$n$.

By transfinite induction on $\a$ we will show that for every $n\in\N$
and every $(n,\JTom{\a}(T),\X)$-good  partial function $g$, we have
that $\hom{\a}_g$ is $(n+1,T,\phiop(\a,\X))$-good.

For $\a=0$, this follows from the proof of Lemma \ref{lemma:monotone}:
recall that $h_g(\si)$ is a finite sum of terms of the form
$\om^{g(J(\si)\upto i)}$, and $|J(\si)|<|\si|$ by (\ref{P5}). Thus to
compute and compare $h_g$ on strings of length $\leq n+1$, we need only
$g$ to be defined and $(\supset,<_\X)$-monotone on strings of length
$\leq n$.

Now fix $\a>0$ and suppose that $g$ is $(n,\JTom{\a}(T),\X)$-good.
Since $\hom{\a}_g = \hom{\a_0}_{f_\es}$, by the induction hypothesis it
is enough to show that $f_\es$ is $(n,\JTom{\a}_\es(T),
\phiop(\a,\X))$-good. Notice that if $f_\es$ takes values in
$\phiop(\a,\X)$, then $\hom{\a}_g$ takes values in
$\phiop(\a_0,\phiop(\a,\X)) = \phiop(\a,\X)$ (by Definition
\ref{def:phiop}). We will prove by induction on $m\leq n$ that for
every $\tau\in \JTom\a(T)$ of length $n-m$, $f_\tau$ is
$(m,\JTom{\a}_\tau(T), \phiop(\a,\X))$-good. When $m=0$, all we need to
observe is that $f_\tau(\es)=\varphi_{\a,g(\tau)}\in \phiop(\a,\X)$,
and $g(\tau)$ is defined because $|\tau|=n$. Consider now $\tau\in
\JTom\a(T)$ of length $n-(m+1)$. If $\si=\es$, then $f_\tau(\es)$ is
correctly defined as in the case $m=0$. For $\si \in \JTom\a_\tau(T)$
with $0<|\si| \leq m+1$, let $\tau'=\tau\conc\la\si(0)\ra$. We first
need to check that $f_\tau(\si)= \hom{\a_{n-m}}_{f_{\tau'}}(\si)$ is
defined. By the subsidiary induction hypothesis $f_{\tau'}$ is
$(m,\JTom\a_{\tau'}(T),\phiop(\a,\X))$-good. By Lemma \ref{lemma: JT
tau gen recursive}, $\JTom\a_{\tau'}(T) = \JTom{\a_{n-m}}
(\JTom\a_\tau(T)_{\la\si(0)\ra})$. By the transfinite induction
hypothesis (since $\a_{n-m}<\a$) $\hom{\a_{n-m}}_{f_{\tau'}}$ is
$(m+1,\JTom\a_\tau(T)_{\la\si(0)\ra},\phiop(\a,\X))$-good. Therefore
$f_\tau(\si)$ is defined.
Now we need to show $f_\tau$ is
$(\supset,<_{\phiop(\a,\X)})$-monotone on strings of length less than
or equal to $m+1$. Take $\si'\subset \si \in \JTom{\a}_{\tau}(T)$
with $|\si|\leq m+1$. Again let $\tau'=\tau\conc\la\si(0)\ra$. By the
transfinite induction hypothesis, we know that
$\hom{\a_{n-m}}_{f_{\tau'}}$ is $(m+1,\JTom{\a}_{\tau}(T),
\phiop(\a,\X))$-good. Furthermore $f_{\tau'}$ is $(\supset,
<_{\phiop(\a,\X)})$-monotone and takes values in $\phiop(\a,\X) \upto
(\varphi_{\a,g(\tau')}+1)$, because $f_{\tau'}(\es)=
\varphi_{\a,g(\tau')}$. Therefore $\hom{\a_{n-m}}_{f_{\tau'}}$ takes
values below $\varphi_{\a_{n-m}}(\varphi_{\a,g(\tau')}+1)$. When
$\si'=\es$, $f_\tau(\si')=\varphi_{\a,g(\tau)}>
\varphi_{\a_{n-m}}(\varphi_{\a,g(\tau')}+1)> f_\tau(\si)$. When
$\si'\neq\es$, we use the monotonicity of
$\hom{\a_{n-m}}_{f_{\tau'}}$.
\end{proof}

\begin{theorem}\label{alpha}
For every computable ordinal $\a$ and $Z \in \Bai$, there exists a
$Z$-computable linear ordering $\X$ such that the jump of every
descending sequence in $\X$ computes $Z^{(\om^\a)}$, but there is a
$Z$-computable descending sequence in $\phiop(\a,{\X})$.
\end{theorem}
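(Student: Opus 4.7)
The plan is to generalize the construction used for Theorem \ref{omega} (the case $\a=1$) from the $\om$-Jump to the $\om^\a$-Jump, relying on the machinery developed throughout Section \ref{ssect:iterating jump} and the operator $\hom{\a}$ from Definition \ref{def:halphag}. Essentially all the work has been packaged into the previous lemmas; the present theorem is the payoff.

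First I would define $T_Z = \set{Z\upto n}{n\in\N}$ and set $\X = \la \JTom{\a}(T_Z), {\KB} \ra$. By Lemma \ref{lemma:JTomegacomp al}, $\X$ is a $Z$-computable linear ordering. Since $Z$ is the unique path through $T_Z$, Lemma \ref{lemma:JTomal paths} gives that $\JJom\a(Z)$ is the unique path through $\JTom{\a}(T_Z)$. Given any descending sequence $f\colon\N\to\X$ with respect to \KB, Lemma \ref{lemma:KB} yields some $Y\in[\JTom{\a}(T_Z)]$ with $Y \leqt f'$; by uniqueness $Y=\JJom\a(Z)$, and by Lemma \ref{lemma: Ja Z vs Z to the om alpha} we have $\JJom\a(Z)\teq Z^{(\om^\a)}$. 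Hence $f'$ computes $Z^{(\om^\a)}$, establishing the first half of the theorem.

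For the second half, let $g\colon \JTom{\a}(T_Z) \to \X$ be the identity map. This is trivially total and $(\supset,<_\X)$-monotone (in fact $\X$ is defined as $\JTom\a(T_Z)$ ordered by \KB, so $g$ is the identity on the domain of $\X$). By Lemma \ref{lemma:halphag monotone}, $\hom{\a}_g\colon T_Z \to \phiop(\a,\X)$ is total, $(\supset,<_{\phiop(\a,\X)})$-monotone, and computable in $g$, hence $Z$-computable. Therefore the sequence
\[
\la \hom{\a}_g(Z\upto n) : n \in \N \ra
\]
is a $Z$-computable strictly descending sequence in $\phiop(\a,\X)$, as required.

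The proof has no real obstacle once one is at this point in the paper: the nontrivial technical content—computability and inverse properties of the $\om^\a$-Jump Tree (Lemmas \ref{lemma:JTomegacomp al}, \ref{lemma:JTomal paths}), the identification $\JJom\a(Z)\teq Z^{(\om^\a)}$ (Lemma \ref{lemma: Ja Z vs Z to the om alpha}), and the monotonicity-preserving construction of $\hom{\a}_g$ (Lemma \ref{lemma:halphag monotone})—has already been set up by transfinite induction on $\a$. The proof of Theorem \ref{alpha} is then a two-paragraph application of these tools, exactly parallel in shape to the proof of Theorem \ref{omega}; no new case analysis or genuinely new idea is needed here, which is precisely the benefit of having organized the $\om^\a$-jump machinery in the way the authors have.
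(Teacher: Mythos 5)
Your proposal is correct and follows exactly the same route as the paper's own proof: the same choice of $\X = \la \JTom{\a}(T_Z), {\KB} \ra$, the same appeal to Lemmas \ref{lemma:JTomegacomp al}, \ref{lemma:JTomal paths}, \ref{lemma:KB}, and \ref{lemma: Ja Z vs Z to the om alpha} for the first half, and the same use of the identity map $g$ together with Lemma \ref{lemma:halphag monotone} for the descending sequence in $\phiop(\a,\X)$. Nothing is missing; the argument is indeed just the assembly of the previously established machinery.
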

\begin{proof}
Let $\X = \la \JTom{\a} (T_Z), {\KB} \ra$ where $T_Z$ is the tree
$\set{Z\upto n}{n\in \N}$. By Lemma \ref{lemma:JTomegacomp al}, $\X$ is
$Z$-computable. By Lemma \ref{lemma:JTomal paths}, $\JJom\a(Z)$ is the
unique path in $\JTom{\a} (T_Z)$. Therefore, by Lemma \ref{lemma:KB},
the jump of every descending sequence in $\X$ computes $\JJom\a(Z)$ and
hence, by Lemma \ref{lemma: Ja Z vs Z to the om alpha}, computes
$Z^{(\om^\a)}$.

Let $g$ be the identity on $\X$, which is $(\supset,<_{\X})$-monotone.
By Lemma \ref{lemma:halphag monotone}, $h^\a_g$ is $(\supset,
<_{\phiop(\a,{\X})})$-monotone and computable. Thus $\set{h^\a_g(Z\upto
n)}{n\in\N}$ is a $Z$-computable descending sequence in
$\phiop(\a,{\X})$.
\end{proof}

\subsection{Reverse mathematics results}\label{ssect:rmVeblen}

In this section, we work in the weak system \RCA. Therefore, again, we
do not have an operation that given $Z\in\Bai$, returns $\JJom\a(Z)$
but the predicate with three variables $Z$, $Y$ and $\a$ that says
$Y=\JJom\a(Z)$ is arithmetic as witnessed by Lemma \ref {lemma:Jomal
approx Jomal}. Notice that if if we have that condition (2) of Lemma
\ref {lemma:Jomal approx Jomal} holds, then \RCA\ can recover all the
$\JJom\a_m(Z)$ and show that $\JJom\a(Z)$ is as defined in Definition
\ref {def: Ja Z}. We can then prove Lemma \ref {lemma: Ja Z vs Z to the
om alpha} in \RCA: if $Y=\JJom\a(Z)$, then $Y$ can compute
$Z^{(\om^\a)}$, and $Z^{(\om^\a)}$ can compute a real $Y$ such that
$Y=\JJom\a(Z)$. Therefore, we get that $\Pi^0_{\om^\a}$-\CA\ is
equivalent to \RCA+$\forall Z\exists Y(Y=\JJom\a(Z))$ and that \ATR\ is
equivalent to \RCA+$\forall \a \forall Z\exists Y(Y=\JJom\a(Z))$.

\begin{theorem}
Let $\a$ be a computable ordinal. Over \RCA,
$\WO(\X\mapsto\phiop(\a,\X))$ is equivalent to $\Pi^0_{\om^\a}$-\CA.
\end{theorem}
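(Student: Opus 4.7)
The plan is to mimic the proof of Theorem \ref{rev:ACApl}, replacing the $\om$-Jump by the $\om^\a$-Jump throughout. The forward direction is already handled by Corollary \ref{cor forward Pi0alpha}, so I only need the reverse direction. As observed right before the theorem statement, $\Pi^0_{\om^\a}$-\CA\ is equivalent over \RCA\ to the axiom $\forall Z\exists Y\,(Y=\JJom\a(Z))$, so it suffices to argue, for an arbitrary $Z\in\Bai$, that $\WO(\X\mapsto\phiop(\a,\X))$ provides a $Y$ with $Y=\JJom\a(Z)$.

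First I would observe that we have \ACA\ available: the computable embedding $\omop^\X\hookrightarrow\phiop(\a,\X)$ sending $\la x_0,\dots,x_{k-1}\ra$ to $\varphi_0(\varphi_{\a,x_0})+\dots+\varphi_0(\varphi_{\a,x_{k-1}})$ (or any analogous term) shows that $\WO(\X\mapsto\phiop(\a,\X))$ implies $\WO(\X\mapsto\omop^\X)$, hence by Girard's theorem (Theorem \ref{Girard}) we get \ACA. With \ACA\ in hand, Lemma \ref{lemma:KB} is available.

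Next, given $Z$, form the $Z$-computable tree $T_Z=\set{Z\upto n}{n\in\N}$ and set $\X = \la \JTom\a(T_Z),{\KB}\ra$, exactly as in Theorem \ref{alpha}. Both Lemma \ref{lemma:JTomegacomp al} (computability of $\JTom\a(T_Z)$) and the construction of the descending sequence $\set{\hom\a_g(Z\upto n)}{n\in\N}$ in $\phiop(\a,\X)$ via the operator of Definition \ref{def:halphag} are given by transfinite recursion along $\a$ on the computable definitions of $\Jom\a$, $\JTom\a$ and $\hom\a$; these are all finitary and formalize inside \RCA. By the monotonicity statement of Lemma \ref{lemma:halphag monotone}, the sequence is strictly \KB-descending in $\phiop(\a,\X)$. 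Applying $\WO(\X\mapsto\phiop(\a,\X))$ contrapositively yields an infinite \KB-descending sequence $f\colon\N\to\JTom\a(T_Z)$.

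By Lemma \ref{lemma:KB} (which needs \ACA) there is a path $Y\in[\JTom\a(T_Z)]$. Lemma \ref{lemma:JTomal paths}, also formalizable in \RCA\ using the approximation Lemma \ref{lemma:Jomal approx Jomal}, gives $Y=\JJom\a(Z')$ for some $Z'\in[T_Z]$; but $Z$ is the unique path through $T_Z$, so $Y=\JJom\a(Z)$. This is the $Y$ required by $\forall Z\exists Y\,(Y=\JJom\a(Z))$, and Lemma \ref{lemma: Ja Z vs Z to the om alpha} (formalized in \RCA) then yields $Z^{(\om^\a)}$ from $Y$.

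The main obstacle to be careful about is the formalization inside \RCA: we cannot refer to $\JJom\a$ as a global operator on $\Bai$, only to the computable string-level objects $\Jom\a$, $\Kom\a$, and $\JTom\a$ defined by transfinite recursion on $\a$. The key point, already spelled out in the preceding paragraphs of Section \ref{ssect:rmVeblen} of the paper, is that Lemmas \ref{lemma:Jomal approx Jomal}, \ref{lemma:JTomal paths}, and \ref{lemma: Ja Z vs Z to the om alpha} each have purely arithmetic statements that \RCA\ can verify, so the passage from ``a path $Y$ through $\JTom\a(T_Z)$'' to ``$Y=\JJom\a(Z)$'' to ``$Z^{(\om^\a)}$ exists'' goes through without needing any extra set-existence beyond \ACA.
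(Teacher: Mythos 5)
Your proposal is correct and follows essentially the same route as the paper, which proves the reversal by formalizing Theorem \ref{alpha} inside \RCA\ exactly as in Theorem \ref{rev:ACApl}: build $\X=\la\JTom\a(T_Z),\KB\ra$, push the computable descending sequence through $\WO(\X\mapsto\phiop(\a,\X))$, extract a path via Lemma \ref{lemma:KB}, and identify it as $\JJom\a(Z)$ via Lemma \ref{lemma:JTomal paths}. Your explicit derivation of \ACA\ from an embedding of $\omop^\X$ into $\phiop(\a,\X)$ is a detail the paper leaves implicit, but it does not change the argument.
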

\begin{proof}
We already showed that
$\Pi^0_{\om^\a}$-\CA$\vdash\WO(\X\mapsto\varphi(\a,\X))$ in Corollary
\ref {cor forward Pi0alpha}.

The proof of the other direction is just the formalization of Theorem \ref {alpha} exactly as we did in Theorem \ref {rev:ACApl}.
\end{proof}

We now give a new, purely computability-theoretic,  proof of Friedman's
theorem.

\begin{theorem}
Over \RCA, $\WO(\X\mapsto\phiop(\X,0))$ is equivalent to \ATR.
\end{theorem}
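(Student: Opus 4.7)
The forward direction is Corollary \ref{cor forward ATR}. For the converse, I work in $\RCA + \WO(\X \mapsto \phiop(\X,0))$ and aim to derive $\ATR$ in the form $\forall \a \forall Z \exists Y\ (Y = \JJom\a(Z))$ (equivalent to $\ATR$ via Lemma \ref{lemma: Ja Z vs Z to the om alpha} and the axiomatization in Section \ref{ssect:subsystems}). The first step is to bootstrap to $\ACA$: for any linear ordering $\X$, the map $\langle x_0,\dots,x_{k-1}\rangle \mapsto \varphi_{x_0}(0) + \dots + \varphi_{x_{k-1}}(0)$ (with $\es \mapsto 0$) is a computable $({<_{\omop^\X}}, {<_{\phiop(\X,0)}})$-monotone embedding of $\omop^\X$ into $\phiop(\X,0)$. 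Hence the contrapositive of the hypothesis yields $\WO(\X \mapsto \omop^\X)$, which is $\ACA$ by Theorem \ref{Girard}.

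Fix a well-ordering $\a$ and a real $Z$. Set $\X_\a = \langle \JTom{\a}(T_Z), \KB\rangle$, a $(\a \oplus Z)$-computable linear ordering by Lemma \ref{lemma:JTomegacomp al}, and let $\Y = \a + \X_\a$, with every element of $\a$ strictly below every element of $\X_\a$. The construction behind Theorem \ref{alpha}, applied with $g$ the identity on $\X_\a$, provides a $(\a \oplus Z)$-computable ${<_\varphi}$-descending sequence $s_n = \hom{\a}_g(Z\upto n)$ in $\phiop(\a, \X_\a)$. I then introduce a computable map $\psi \colon \phiop(\a, \X_\a) \to \phiop(\Y, 0)$ by structural recursion: $\psi(0)=0$; $\psi(\varphi_{\a,x}) = \varphi_x(0)$ for $x \in \X_\a \subseteq \Y$; $\psi(t_1+t_2) = \psi(t_1)+\psi(t_2)$; and $\psi(\varphi_\d(t)) = \varphi_\d(\psi(t))$ for $\d \in \a \subseteq \Y$. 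The guiding idea is that since every $x \in \X_\a$ lies above every $\d \in \a$ in $\Y$, the normalization rule $\varphi_\d(\varphi_x(0)) = \varphi_x(0)$ of Definition \ref{def:phiop} in $\phiop(\Y,0)$ mirrors the identity $\varphi_\d(\varphi_{\a,x}) = \varphi_{\a,x}$ in $\phiop(\a,\X_\a)$. After proving by induction on terms that $\psi$ is $({<_\varphi}, {<_\varphi})$-monotone, $(\psi(s_n))_{n\in\N}$ becomes a $(\a \oplus Z)$-computable descending sequence in $\phiop(\Y,0)$.

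Applying the contrapositive of $\WO(\X \mapsto \phiop(\X,0))$ to $\Y$ yields a descending sequence in $\Y$. Because $\a$ is well-ordered and forms an initial segment of $\Y$ sitting strictly below $\X_\a$, this descending sequence must eventually consist entirely of $\X_\a$-elements, producing a $\KB$-descending sequence in $\JTom{\a}(T_Z)$. By Lemma \ref{lemma:KB}, using the $\ACA$ obtained above, its jump computes a path $Y \in [\JTom{\a}(T_Z)]$; Lemma \ref{lemma:JTomal paths} identifies $Y = \JJom{\a}(Z')$ for some $Z' \in [T_Z] = \{Z\}$, forcing $Y = \JJom{\a}(Z)$. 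As $\a$ and $Z$ are arbitrary, this yields $\ATR$.

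The main obstacle is the careful verification that $\psi$ preserves $\leq_\varphi$ clause by clause and sends normal forms to normal forms. The crux is that the constants $\varphi_{\a,x}$ of $\phiop(\a,\X_\a)$ get realized in $\phiop(\Y,0)$ as $\varphi_x(0)$ with $x$ above $\a$ in $\Y$, making them simultaneous fixed points of every $\varphi_\d$ with $\d \in \a$ — exactly the algebraic property of $\varphi_{\a,x}$ that governs the definition of $\leq_\varphi$ on the source side. Once this is in hand, everything else is routine.
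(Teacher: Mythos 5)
Your proof is correct and takes essentially the same route as the paper: the heart of both arguments is the embedding of $\phiop(\a,\X)$ into $\phiop(\a+\X,0)$ sending $\varphi_{\a,x}$ to $\varphi_{\a+x}(0)$, justified by the fact that these images are fixed points of every $\varphi_\d$ with $\d<\a$. The only organizational differences are that you inline the reversal of the fixed-$\a$ theorem (pushing the descending sequence from Theorem \ref{alpha} forward through the embedding and applying the hypothesis once to $\a+\X_\a$) rather than citing it, and you make the bootstrap to \ACA\ explicit.
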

\begin{proof}
We already showed that \ATR\ proves $\WO(\X\mapsto\phiop(\X,0))$ in
Corollary \ref{cor forward ATR}.

For the reversal, we argue within \RCA. Let $\a$ be any ordinal. Notice
that relative to the presentation of $\a$, all the constructions of
this section can be done as if $\a$ were any computable ordinal.
Therefore, by the previous theorem it is enough to show that
$\WO(\X\mapsto\phiop(\a,\X))$  holds. Let $\X$ be a well-ordering. We
now claim that $\phiop(\a,\X)$ embeds in $\phiop(\a+\X,0)$, which would
imply that $\phiop(\a,\X)$ is well-ordered too as needed to show
$\WO(\X\mapsto\phiop(\a,\X))$.

Define $f\colon \phiop(\a,\X)\to\phiop(\a+\X,0)$ by induction on the
terms of $\phiop(\a,\X)$, setting
\begin{itemize}
\item $f(0)=0$,
\item $f(\varphi_{\a,x}) = \varphi_{\a+x}(0)$,
\item $f(t_1+t_2)=f(t_1)+f(t_2)$,
\item $f(\varphi_a(t))=\varphi_a(f(t))$.
\end{itemize}

The proof that $f$ is an embedding is by induction on terms. Consider
$t,s\in \phiop(\a,\X)$. We want to show that $t\leq_{\phiop(\a,\X)}
s\iff f(t)\leq_{\phiop(\a+\X,0)} f(s)$. By induction hypothesis, assume
this is true for pairs of terms shorter than $t+s$. Suppose that $t\leq
s$. Using the induction hypothesis, it is not hard to show that
$f(t)\leq f(s)$. Suppose now that $t\not\leq s$. Then, none of the
conditions of Definition \ref{def:phiop} hold. If $t=0, t=t_1+t_2$, or
$t= \varphi_a(t_1)$, then we can apply the induction hypothesis again
and get that none of the conditions of Definition \ref{def:phiop} hold
for $f(t)$ and $f(s)$ either and hence $f(t)\not\leq f(s)$. The case
$t=\varphi_{\a,x}$ is the only one that deserves attention. In this
case we have that for no $y\geq x$, $\varphi_{\a,y}$ appears in $s$. It
then follows that $f(s)\in \phiop(\a+\X\upto x,0)$. Since
$f(t)=\varphi_{\a+x}(0)$ is greater than all the elements of
$\phiop(\a+\X\upto x,0)$ we get $f(t)\not\leq f(s)$ a wanted.
\end{proof}

\bibliography{Veblen}
\bibliographystyle{alpha}

\end{document}